\theoremstyle{definition}
\newtheorem{Definition}{Definition}[subsection]
\theoremstyle{plain}
\newtheorem{Theorem}[Definition]{Theorem}
\theoremstyle{plain}
\theoremstyle{plain}
\newtheorem{Proposition}[Definition]{Proposition}
\theoremstyle{plain}
\newtheorem{Lemma}[Definition]{Lemma}
\theoremstyle{plain}
\newtheorem{Corollary}[Definition]{Corollary}
\theoremstyle{plain}
\theoremstyle{plain}
\theoremstyle{plain}
\newtheorem{Convention}[Definition]{Convention}
\theoremstyle{definition}
\newtheorem{Example}[Definition]{Example}
\theoremstyle{definition}
\newtheorem{Notation}[Definition]{Notation}
\theoremstyle{remark}
\newtheorem{Remark}[Definition]{Remark}
\theoremstyle{plain}
\newcommand{\thistheoremname}{}
\newtheorem*{genericthm*}{\thistheoremname}
\newenvironment{namedthm*}[1]
  {\renewcommand{\thistheoremname}{#1}%
   \begin{genericthm*}}
  {\end{genericthm*}}
\author{Thibault D. Décoppet}
\title{Finite Semisimple Module 2-Categories}
\begin{document}

\bibliographystyle{alpha}

    \maketitle
    \hspace{1cm}
    \begin{abstract}
        Let $\mathfrak{C}$ be a multifusion 2-category. We show that every finite semisimple $\mathfrak{C}$-module 2-category is canonically enriched over $\mathfrak{C}$. Using this enrichment, we prove that every finite semisimple $\mathfrak{C}$-module 2-category is equivalent to the 2-category of modules over a rigid algebra in $\mathfrak{C}$.
    \end{abstract}
    
\tableofcontents
    
\section*{Introduction}
\addcontentsline{toc}{section}{Introduction}

This work fits into our project of investigating the properties of (multi)fusion 2-categories (originally defined in \cite{DR}, see also \cite{D2}). More precisely, our main theorem is a categorification of the main result of \cite{O}, sometimes referred to as Ostrik's theorem. This last theorem asserts that, over a multifusion category, every finite semisimple module category is the category of modules over an algebra. This result appears throughout the theory of fusion categories. For instance, it features prominently in the construction of the relative Deligne tensor product (see \cite{ENO}, and \cite{DSPS14}). Further, it also features prominently in the proof of many deep results on fusion categories such as Ocneanu rigidity (see \cite{EGNO}), or the fact that fusion categories are separable (see \cite{DSPS13}).

Categorifying the definition of a fusion category, Douglas and Reutter introduced in \cite{DR} the notion of a (multi)fusion 2-categories (over an algebraically closed field of characteristic zero), that is a finite semisimple 2-categoryequipped with a rigid monoidal structure. Since their introduction, these objects have been the subject of an increasing level of attention. Their first explicit application was the construction of a state-sum invariant of 4-manifolds, which takes as input a highly structured fusion 2-category (see \cite{DR}). More generally, fusion 2-categories play an important role in the theory of braided fusion categories, sometimes explicitly as in \cite{JFR}, and sometimes implicitly as in \cite{BJS}, \cite{BJSS}, \cite{DN}, \cite{ENO}, or \cite{JMPP}. This role is explained by the fact that the 2-category of finite semisimple module categories over a braided fusion category is a connected fusion 2-category.

In a somewhat different direction, fusion 2-categories have also found applications in condensed matter physics. More precisely, \cite{JF} has given a definition of separable multifusion $n$-category, which is used to propose a definition of topological orders. These objects have also been a recent subject of interest (see \cite{JFY}, \cite{KZ1}, and \cite{KZ2}). What makes separable multifusion $n$-categories useful in the context of high energy physics is that they are (essentially by definition) fully dualizable objects of appropriate higher categories, and so define topological field theories via the coborodism hypothesis of \cite{BD} and \cite{L}. In the case $n=1$, it has been shown in \cite{DSPS13} that every multifusion category (over an algebraically closed field of characteristic zero) is separable. When $n=2$, it has been proven in \cite{BJS} that every braided fusion category is fully dualizable, which can be used to show that connected fusion 2-categories are separable. It remains an open problem to prove that every multifusion 2-category is separable.

The motivation behind our main theorem is therefore twofold. Firstly, in order to develop the theory of fusion 2-categories, one can attempt to categorify classical results of the theory of fusion 1-categories. This has already proven to be fruitful when constructing the 2-Deligne tensor product of two finite semisimple 2-categories (see \cite{D3}). Furthermore, as explained above, such inquiries into the properties of fusion 2-categories might prove useful in the study of (braided) fusion categories. Secondly, we believe that our main theorem will help us towards proving that every multifusion 2-category is separable. More precisely, the definition of separability, while appropriate to study dualizability questions, is difficult to check in practice. To remedy this issue, one wishes to exhibit an alternative characterization of separability. As one of the essential tools used to achieve this in the setting of fusion 1-categories is Ostrik's theorem (see \cite{DSPS13}), it is sensible to believe that our main theorem will play a similar role when studying fusion 2-categories. In fact, it is possible to give such an alternative characterization of separability for multifusion 2-categories, and we shall return to this point in \cite{D6}. In the meantime, let us explain further the relation between our main result and the classical result of Ostrik.

\subsection*{Finite Semisimple Module 1-Categories}
\addcontentsline{toc}{subsection}{Finite Semisimple Module 1-Categories}

Let us fix a multifusion category $\mathcal{C}$ over an algebraically closed field of characteristic zero, and $\mathcal{M}$ a finite semisimple left $\mathcal{C}$-module category, that is a finite semisimple category $\mathcal{M}$ with a coherent left action $\otimes:\mathcal{C}\times\mathcal{M}\rightarrow \mathcal{M}$. Ostrik's theorem (see \cite{O}, or \cite{EGNO}) asserts that $\mathcal{M}$ is equivalent to the category of right modules over an algebra $A$ in $\mathcal{C}$. In fact, the proof allows us to say a little more. Namely, one begins by observing that the finite semisimplicity assumption can be used to upgrade $\mathcal{M}$ to a category enriched over $\mathcal{C}$. Then, given any object $M$ of $\mathcal{M}$, the endomorphism object $\underline{End}(M)$ in $\mathcal{C}$ admits a canonical algebra structure. Further, for any object $N$, the $Hom$-object $\underline{Hom}(M,N)$ in $\mathcal{C}$ is canonically a right $\underline{End}(M)$-module via composition. Thus, we have a functor $\underline{Hom}(M,-):\mathcal{M}\rightarrow \mathrm{Mod}_{\mathcal{C}}(\underline{End}(M))$ from $\mathcal{M}$ to the category of right $\underline{End}(M)$-modules in $\mathcal{C}$. Using universal properties, it can be shown that this functor is a left $\mathcal{C}$-module functor. Let us now assume that $M$ is a $\mathcal{C}$-generator of $\mathcal{M}$, meaning that every object of $\mathcal{M}$ is a direct summand of $C\otimes M$ for some $C$ in $\mathcal{C}$. Then, the restriction of the functor $\underline{Hom}(M,-)$ to the full subcategory on the objects of the form $C\otimes M$ for some $C$ in $\mathcal{C}$ is fully faithful. It then follows from the fact that $\mathcal{M}$ has coequalizers and that they are preserved by $\underline{Hom}(M,-)$ that $\mathcal{M}$ is equivalent to the category of right modules over $A=\underline{End}(M)$ in $\mathcal{C}$. Conceptually, coequalizers appear because Ostrik's theorem is a special case of the Barr-Beck theorem (as remarked in \cite{EGNO}). Finally, we also wish to point out that it was shown in \cite{DSPS13} that Ostrik's theorem holds over any base field.

\subsection*{Finite Semisimple Module 2-Categories}
\addcontentsline{toc}{subsection}{Finite Semisimple Module 2-Categories}

Let $\mathfrak{C}$ be a multifusion 2-category over an algebraically closed field of characteristic zero, and $\mathfrak{M}$ a finite semisimple left $\mathfrak{C}$-module 2-category, i.e.\ a finite semisimple 2-category $\mathfrak{M}$ equipped with a coherent left action $\Diamond:\mathfrak{C}\times\mathfrak{M}\rightarrow\mathfrak{M}$. The proof of our main theorem begins by proving an enrichment result that categorifies the enrichment part of the proof of the classical Ostrik theorem. More precisely, using the definition of a 2-category enriched over a monoidal 2-category given in \cite{GS}, we prove the following result.

\begin{namedthm*}{Theorem \ref{thm:enrichmentmodule}}
Let $\mathfrak{C}$ be a multifusion 2-category, and let $\mathfrak{M}$ be a finite semisimple left $\mathfrak{C}$-module 2-category. Then, $\mathfrak{M}$ admits a canonical $\mathfrak{C}$-enriched structure.
\end{namedthm*}

\noindent As an immediate corollary, this implies that, for any object $M$ in $\mathfrak{M}$, $\underline{End}(M)$ is an algebra in $\mathfrak{C}$ under the enriched composition operation. Then, for every $N$ in $\mathfrak{M}$, $\underline{Hom}(M,N)$ is a right $\underline{End}(M)$-module with action induced by the enriched composition operation. Further, we obtain a linear 2-functor $\underline{Hom}(M,-):\mathfrak{M}\rightarrow \mathbf{Mod}_{\mathfrak{C}}(\underline{End}(M))$ to the 2-category of right $\underline{End}(M)$-modules in $\mathfrak{C}$, which can be upgraded to a left $\mathfrak{C}$-module 2-functor.

In order to establish our categorification of Ostrik's theorem, we will need to understand the structure of the 2-category $\mathbf{Mod}_{\mathfrak{C}}(\underline{End}(M))$. We begin by examining the properties of the algebra $\underline{End}(M)$ in $\mathfrak{C}$. More precisely, recall from \cite{JFR} that an algebra $A$ in a monoidal 2-category is called rigid if its multiplication 1-morphism $m$ has a right adjoint as an $(A,A)$-bimodule 1-morphism. We establish the following theorem.

\begin{namedthm*}{Theorem \ref{thm:rigid}}
Let $\mathfrak{C}$ be a multifusion 2-category, and $\mathfrak{M}$ a finite semisimple left $\mathfrak{C}$-module 2-category. Given any object $M$ of $\mathfrak{M}$, the algebra $\underline{End}(M)$ in $\mathfrak{C}$ is rigid.
\end{namedthm*}

\noindent Our proof proceeds by showing that, for every left $\mathfrak{C}$-module 2-functor $F:\mathfrak{M}\rightarrow\mathfrak{M}$, the object $\underline{Hom}(M,F(M))$ of $\mathfrak{C}$ admits a canonical $(\underline{End}(M),\underline{End}(M))$-bimodule structure. Moreover, this assignment extends functorially to left $\mathfrak{C}$-module 2-natural transformations and $\mathfrak{C}$-module modification. The theorem follows by proving that certain left $\mathfrak{C}$-module 2-natural transformations have right adjoints as a left $\mathfrak{C}$-module 2-natural transformation. We then go on to study the 2-category $\mathbf{Mod}_{\mathfrak{C}}(A)$ of right modules over a rigid algebra $A$ in $\mathfrak{C}$. In particular, we obtain a characterization of rigid algebras in a multifusion 2-category over an algebraically closed field of characteristic zero.

\begin{namedthm*}{Theorem \ref{thm:characterizationrigid}}
An algebra $A$ in a multifusion 2-category $\mathfrak{C}$ is rigid if and only if $\mathbf{Mod}_{\mathfrak{C}}(A)$ is a finite semisimple 2-category.
\end{namedthm*}

\noindent When $\mathfrak{C} = \mathbf{2Vect}$, rigid algebras correspond exactly to multifusion 1-categories. In this case, the theorem above recovers theorem 1.4.8 of \cite{DR}, which asserts that the 2-category $\mathbf{Mod}(\mathcal{C})$ of finite semisimple module categories over the multifusion category $\mathcal{C}$ is finite semisimple.

Let us now assume that $M$ is a $\mathfrak{C}$-generator, meaning that every object of $\mathfrak{M}$ receives a non-zero 1-morphism from $C\Diamond M$ for some $C$ in $\mathfrak{C}$. We wish to show that $\underline{Hom}(M,-)$ is an equivalence of 2-categories. At this point, we do not quite follow the classical proof. Specifically, we do not know how to directly prove the existence of the weighted colimits required by the Barr-Beck theorem for 2-categories (see \cite{CMV}) in finite semisimple 2-categories. Instead, we will rely on the general properties of finite semisimple 2-categories established in \cite{DR} and \cite{D1}. More precisely, it follows by inspection that the restriction of the 2-functor $\underline{Hom}(M,-)$ to the objects of the form $C\Diamond M$ for some $C$ in $\mathfrak{C}$ is essentially surjective on 1-morphisms and fully faithful on 2-morphisms. But, for every right $\underline{End}(M)$-module $N$, there is a canonical right $\underline{End}(M)$-module 1-morphism $N\Box\underline{End}(M)\rightarrow N$. As $\mathfrak{M}$ and $\mathbf{Mod}_{\mathfrak{C}}(\underline{End}(M))$ are finite semisimple, this implies that $\underline{Hom}(M,-)$ is essentially surjective. Thence, we find that $\underline{Hom}(M,-)$ is an equivalence, establishing our main theorem.

\begin{namedthm*}{Theorem \ref{thm:2Ostrik}}
For any multifusion 2-category $\mathfrak{C}$, and finite semisimple left $\mathfrak{C}$-module 2-category $\mathfrak{M}$, there exists a rigid algebra $A$ in $\mathfrak{C}$ such that $\mathfrak{M}$ is equivalent to $\mathbf{Mod}_{\mathfrak{C}}(A)$ as a left $\mathfrak{C}$-module 2-category.
\end{namedthm*}

\noindent With $\mathfrak{C} = \mathbf{2Vect}$, our main theorem reduces to the statement obtained in  theorem 1.4.9 of \cite{DR} that every finite semisimple 2-category is equivalent to $\mathbf{Mod}(\mathcal{C})$ for some multifusion 1-category $\mathcal{C}$.

Given that Ostrik's theorem holds over any base field, it is reasonable to ask whether our results can be similarly generalized. It so happens that, upon making reasonable additional hypotheses, this can be done. More precisely, if we work over a perfect field, that $\mathfrak{C}$ is a locally separable compact semisimple tensor 2-category, and $\mathfrak{M}$ is a locally separable compact semisimple left $\mathfrak{C}$-module 2-category as defined in \cite{D5}, then theorems \ref{thm:enrichmentmodule}, \ref{thm:rigid}, and \ref{thm:2Ostrik} do hold. However, we emphasize that theorem \ref{thm:characterizationrigid} does not hold over fields of positive characteristic. In order not to unnecessarily obscure our arguments, we discuss how to make this generalization separately using remarks at the end of the relevant subsection. 

\subsection*{Outline}
\addcontentsline{toc}{subsection}{Outline}

In section \ref{sec:prelim}, we begin by explaining our various conventions and notations for (monoidal) 2-categories, and (monoidal) 2-functors. We also explain the graphical language of string diagrams, which will be used throughout this article.

Then, in section \ref{sec:module2cat}, we give two equivalent definitions of a left module 2-category over a monoidal 2-category $\mathfrak{C}$. We go on to define the relevant notions of left $\mathfrak{C}$-module 2-functor, left $\mathfrak{C}$-module 2-natural transformation, and left $\mathfrak{C}$-module modification. We end by describing the basic properties satisfied by these objects. In particular, we derive a coherence result for left module 2-categories.

In section \ref{sec:algebramodule}, fixing a monoidal 2-category $\mathfrak{C}$, we recall the definition of an algebra in $\mathfrak{C}$, and the definition of a right module in $\mathfrak{C}$ over an algebra. We review the properties of the 2-category of right modules in $\mathfrak{C}$ over an algebra, and we prove that this 2-category admits the structure of a left $\mathfrak{C}$-module 2-category. We end this section by proving various technical lemmas, which will be used later on.

Next, in section \ref{sec:enrichment}, we specialize our investigations to the case when $\mathfrak{C}$ is a multifusion 2-category. We explain how to construct a $\mathfrak{C}$-enriched 2-category out of a finite semisimple left $\mathfrak{C}$-module 2-category, and prove that the enriched $Hom$-2-functors are left $\mathfrak{C}$-module 2-functors.

In section \ref{sec:2Ostrik}, we show that left $\mathfrak{C}$-module 2-functors between two finite semisimple left $\mathfrak{C}$-module 2-categories can be upgraded to $\mathfrak{C}$-enriched 2-functors. We go on to explain how to assign a bimodule in $\mathfrak{C}$ to any such 2-functor. Using this construction, we show that the algebra in $\mathfrak{C}$ of enriched endomorphisms associated to an object of a finite semisimple left $\mathfrak{C}$-module 2-category is always rigid. Moreover, we show that, over an algebraically closed field of characteristic zero, an algebra in a multifusion 2-category is rigid if and only if the corresponding 2-category of right modules is finite semisimple. We then prove that finite semisimple left $\mathfrak{C}$-module 2-categories are 2-categories of right modules over a rigid algebra in $\mathfrak{C}$.

Finally, in the appendix, we give the string diagram manipulations used in proofs of the results of section \ref{sec:enrichment}.

\subsection*{Acknowledgments}

I am grateful to Christopher Douglas and Andr\'e Henriques for their supervison and valuable feedback. I would also like to thank Dave Penneys for our helpful conversation, and David Reutter for his useful comments on an earlier version of this manuscript.

\section{Graphical Calculus in Monoidal 2-Categories}\label{sec:prelim}

\addtocounter{subsection}{1}

Before we can actually begin our investigations, we need to introduce our conventions and notations regarding (monoidal) 2-categories, which are inspired by those of \cite{GS}. Our first convention is that we shall use the term 2-category to refer to what is sometimes called a bicategory. When we wish to consider their strict variant, we shall use the names strict 2-category. Further, thanks to the coherence theorem for 2-category, we will omit all of the coherence 2-isomorphisms for 2-categories that might appear.

Concerning monoidal 2-categories, we shall for the most part follow the notation of \cite{SP}: The main distinction being that we reserve the use of the symbol $1$ for identity 1-morphisms. In details, a monoidal 2-category is a 2-category $\mathfrak{C}$ equipped with a 2-functor $\mathfrak{C}\times\mathfrak{C}\rightarrow \mathfrak{C}$, a distinguished object $I$, and adjoint 2-natural equivalences $\alpha$, $l$, $r$, given on objects $A,B,C$ in $\mathfrak{C}$ by: $$\alpha_{A,B,C}:(A\Box B)\Box C\rightarrow A\Box (B\Box C),\ \ \ l_B:I\Box B\rightarrow B,\ \ \ r_A:A\rightarrow A\Box I.$$ We use $\alpha^{\bullet}$, $l^{\bullet}$, $r^{\bullet}$ to denote their chosen adjoints pseudo-inverse. Furthermore, there are invertible modifications $\pi$, $\lambda$, $\rho$, $\nu$, which are given on $A,B,C,D$ in $\mathfrak{C}$ by 

\begin{center}
$\begin{tikzcd}[sep=tiny]
                           & (A(BC))D \arrow[ld, "\alpha"'] &                                 & ((AB)C)D \arrow[ll, "\alpha 1"'] \arrow[rd, "\alpha"] &                           \\
A((BC)D) \arrow[rrdd, "1\alpha"']  & {}\arrow[rr, "\pi", Rightarrow, shorten >=11.5ex, shorten <=11.5ex ]                                                      &  &  {}                                      & (AB)(CD), \arrow[lldd, " \alpha"] \\
                           &                                                        &                               &                                       &                           \\
                           &                                                        & A(B(CD))                              &                                       &                          
\end{tikzcd}$

\begin{tabular}{@{}c c c@{}}
$$\begin{tikzcd}[sep=tiny]
BC \arrow[ddd, equal]\arrow[Rightarrow, rrrddd, "\lambda", shorten > = 2ex, shorten < = 2ex]  &                                        &    & (IB)C \arrow[ddd, "\alpha"] \arrow[lll, "l1"'] \\
 &  &    & \\
  &  &  &  \\
BC                                            &                                        &    &  I(BC), \arrow[lll, "l"]
\end{tikzcd}$$

&

$$\begin{tikzcd}[sep=tiny]
A(BI) \arrow[ddd, equal]\arrow[Rightarrow, rrrddd, "\rho", shorten > = 2ex, shorten < = 2ex]  &                                        &    & AB \arrow[ddd, "r"] \arrow[lll, "1 r"'] \\
 &  &    & \\
  &  &  &  \\
A(BI)                                            &                                        &    &  (AB)I, \arrow[lll, "\alpha"]
\end{tikzcd}$$

&

$$\begin{tikzcd}[sep=tiny]
(AI)C  \arrow[rrr, "\alpha", ""{name = a}] &  & & A(IC) \arrow[ddd, "1l"]\\
&  &  &\\
& & &\\
AC \arrow[uuu, "r1"] & & & AC. \arrow[equal, lll, ""{name = b}]

\arrow[Rightarrow,"\mu", from= a, to = b, shorten > = 2ex, shorten < = 2ex]
\end{tikzcd}$$
\end{tabular}
\end{center}

Observe that we have omitted the symbol $\Box$ from the diagrams above to declutter them, and will continue doing so. Finally, these invertible modifications have to satisfy certain axioms given in \cite{SP}.

In order to define composites of 2-morphisms, we will use string diagrams, i.e.\ diagrams consisting of ``spaghettis and meatballs''. More precisely, regions represent objects, strings 1-morphisms, and coupons 2-morphisms. Our string diagrams are to be read from top to bottom (composition of 1-morphisms) and from left to right (composition of 2-morphisms). To prevent the diagrams from being too cluttered, we do not label the regions; If needed these labels can always be recovered from the labels on the strings. We also leave out the strings labelled by identity 1-morphisms, and the coupons labelled by identity 2-morphisms. Further, we omit the symbol $\Box$ in such diagrams. For instance, the 2-isomorphisms $\pi$, $\lambda$, $\rho$, and $\mu$ given above as pasting diagrams are depicted using string diagrams as: $$\begin{tabular}{c c c c}
\ \ \ \ \ \ \includegraphics[width=20mm]{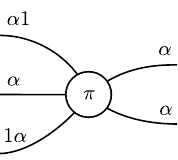},\ \   & \ \ \includegraphics[width=20mm]{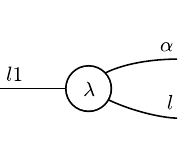},\ \  & \ \ \includegraphics[width=20mm]{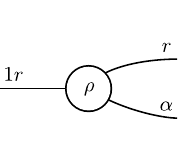},\ \  &
\ \ \includegraphics[width=20mm]{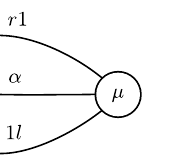}.
\end{tabular}$$ In some of the bigger string diagrams depicted in this paper, particularly those featured in the appendix, we shall omit the labels on certain strings: These can always be recovered from the labels given in other string diagrams. Finally, thanks to the coherence theorem for monoidal 2-categories (see \cite{Gur}), we can and will often assume that the monoidal 2-category under consideration is strict cubical. This simplifies our string diagrams greatly. Namely, we can and do omit both the parentheses and the instances of $I$ in expressions involving the monoidal product. Further, as we do not draw identity 1-morphisms and identity 2-morphisms, the only structure 2-isomorphisms we have to depict are the interchangers $\phi^{\Box}$. This explains why we introduce a special notation for them: Given $f$, $g$ two 1-morphisms in $\mathfrak{C}$, the composite 2-isomorphism $$(\phi^{\Box}_{(1,g),(f,1)})^{-1} \cdot \phi^{\Box}_{(f,1),(1,g)}:(f\Box 1)\circ (1\Box g)\cong (1\Box g)\circ (f\Box 1)$$ is depicted by the string diagram below on the left and its inverse by that on the right: $$\begin{tabular}{c c c c}
\includegraphics[width=20mm]{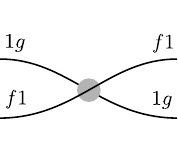},\ \ \ \   & \ \ \ \  \includegraphics[width=20mm]{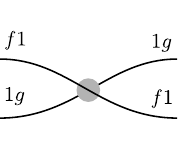}.
\end{tabular}$$

When considering a 2-category $\mathfrak{A}$ that has right adjoints, we will employ the usual calculus of cups and caps. This means that, given a 1-morphism $f$ in $\mathfrak{A}$, we can pick a right adjoint $f^*$, and use the string diagram below on the left to depict the unit of this adjunction, and the one on the right to depict the counit: $$\begin{tabular}{c c}
\includegraphics[width=20mm]{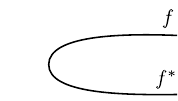},\ \ \ \ \ \ \ \ 
\ \ \ \ \ \ \ \ \includegraphics[width=20mm]{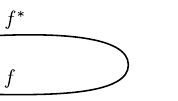}.
\end{tabular}$$ If $\mathfrak{A}$ has left adjoints, we can similarly pick a left adjoint $^*f$ to $f$, and use the obvious analogue of the above notation to depict the corresponding unit and counit. We want to emphasize that this convention regarding cups and caps is different from that used in \cite{GS}.

In accordance with our previous conventions, we use the term 2-functor to designate what is sometimes called a homomorphism of bicategories. Given two 2-categories $\mathfrak{A}$, and $\mathfrak{B}$, a 2-functor $F:\mathfrak{A}\rightarrow \mathfrak{B}$ consists of a 2-isomorphism $\phi^F_A:Id_{F(A)}\cong F(Id_A)$ in $\mathfrak{B}$ for every object $A$ of $\mathfrak{A}$, and a 2-isomorphism $\phi^F_{g,f}:F(g)\circ F(f)\cong F(g\circ f)$ in $\mathfrak{B}$ for every pair of composable 1-morphism in $\mathfrak{A}$. These isomorphisms have to satisfy coherence axioms. In the rare instance we will use monoidal 2-functors, we will follow the notations of \cite{SP}, to which we refer for a precise exposition.

Given two 2-functors $F,G:\mathfrak{A}\rightarrow \mathfrak{B}$, a 2-natural transformation $\beta:F\Rightarrow G$ consists of 1-morphisms $\beta_A:F(A)\rightarrow G(A)$ for every object $A$ of $\mathfrak{A}$ together with 2-isomorphisms $$\begin{tikzcd}[sep=tiny]
F(A) \arrow[ddd, "F(f)"']\arrow[rrr, "\beta_A"]  &                                        &    & G(A) \arrow[ddd, "G(f)"]  \\
 &  &    & \\
  &  &  &  \\
F(B)\arrow[rrr, "\beta_B"']\arrow[Rightarrow, rrruuu, "\beta_f", shorten > = 2ex, shorten < = 2ex]                                            &                                        &    &  G(B), 
\end{tikzcd}$$ for every $f:A\rightarrow B$ in $\mathfrak{A}$, which are natural in $f$, and satisfy well-known coherence axioms. In our graphical language, we depict such 2-isomorphisms by the string diagram below on the left, and its inverse by that on the right: $$\begin{tabular}{c c c c}
\includegraphics[width=20mm]{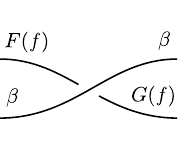},\ \ \ \   & \ \ \ \  \includegraphics[width=20mm]{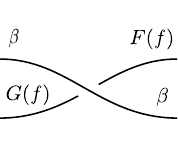}.
\end{tabular}$$ Our convention is that the string labelled $\beta$ remains on top.

\newlength{\prelim}
\settoheight{\prelim}{\includegraphics[width=20mm]{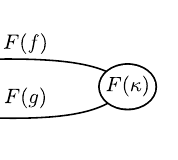}}

Given 1-morphisms $f:A\rightarrow B$, $g:B\rightarrow A$ in $\mathfrak{A}$, and a 2-morphism $\kappa:g\circ f\Rightarrow Id_A$ in $\mathfrak{A}$, we set: $$\includegraphics[width=20mm]{Module2CategoriesPictures/prelim/Fkappa.pdf} \raisebox{0.43\prelim}{$:= (\phi^F_{A})^{-1} \cdot F(\kappa)\cdot \phi^F_{g,f}.$}$$ We generalize this convention in the obvious way to more general 2-morphisms. Further, we keep the notations we have introduced above for the image under a 2-functor of interchangers and 2-isomorphisms witnessing naturality. For instance, if $\beta:F\Rightarrow G$ is a 2-natural transformation and $H:\mathfrak{B}\rightarrow \mathfrak{C}$ is a 2-functor, we have $$\includegraphics[width=37.5mm]{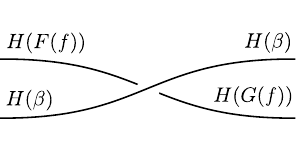} \raisebox{0.50\prelim}{$\ \ \ = (\phi^H_{f, \beta_A})^{-1} \cdot H(\beta^f)\cdot \phi^H_{\beta_B, f}.$}$$

Now, let $F:\mathfrak{A}\rightleftarrows \mathfrak{B}:G$ be two 2-functors. Following definition 2.1 of \cite{Gur}, we say $F$ and $G$ form a coherent 2-adjunction provided that there exists 2-natural transformations $u$ and $c$, given on $A$ in $\mathfrak{A}$ and $B$ in $\mathfrak{B}$ by $$u_A:A\rightarrow G(F(A))\ \ \text{and} \ \ c_B:F(G(B))\rightarrow B,$$ and invertible modifications $\Phi$ and $\Psi$, given on $A$ in $\mathfrak{A}$ and $B$ in $\mathfrak{B}$ by $$\Phi_A:c_{F(A)}\circ F(u_A)\cong Id_{F(A)}\ \ \text{and} \ \ \Psi_B:G(c_B)\circ u_{G(B)}\cong Id_{G(B)},$$ such that for every $B$ in $\mathfrak{B}$, the equality

\settoheight{\prelim}{\includegraphics[width=37.5mm]{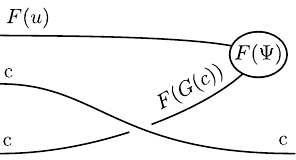}}

$$
\begin{tabular}{@{}ccc@{}}
\includegraphics[width=37.5mm]{Module2CategoriesPictures/prelim/coherence1.pdf} & \raisebox{0.45\prelim}{$=$} &

\includegraphics[width=30mm]{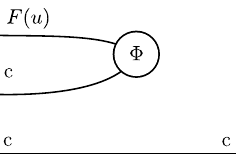}
\end{tabular}
$$

holds in $Hom_{\mathfrak{B}}(F(G(B)),B)$, and for every $A$ in $\mathfrak{A}$, the equality

$$
\begin{tabular}{@{}ccc@{}}
\includegraphics[width=37.5mm]{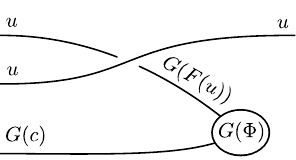} & \raisebox{0.45\prelim}{$=$} &

\includegraphics[width=30mm]{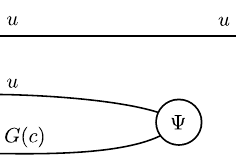}
\end{tabular}
$$

holds in $Hom_{\mathfrak{A}}(A, G(F(A)))$. These two equations are called the swallowtail equations.

Finally, we wish to remark that under our conventions, the swallowtail equations remain valid after application of a 2-functor. We only state this precisely for the first swallow tail equation, a similar result holds for the second.

\settoheight{\prelim}{\includegraphics[width=45mm]{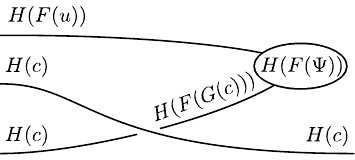}}

\begin{Lemma}\label{lem:biadj2functor}
Let $F:\mathfrak{A}\rightleftarrows \mathfrak{B}:G$ a coherent 2-adjunction, and $H:\mathfrak{B}\rightarrow \mathfrak{C}$ a 2-functor. For any $B$ in $\mathfrak{B}$, the equality $$\begin{tabular}{@{}ccc@{}}
\includegraphics[width=45mm]{Module2CategoriesPictures/prelim/coherence1prime.pdf} & \raisebox{0.45\prelim}{$=$} &
\includegraphics[width=30mm]{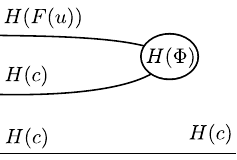}
\end{tabular}$$ holds in $Hom_{\mathfrak{C}}(H(F(G(B))),H(B))$.
\end{Lemma}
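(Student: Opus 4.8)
The plan is to reduce the statement to the corresponding swallowtail equation \emph{before} applying $H$, using the fact that all the data appearing in the equation to be proven is, under our conventions, obtained by applying $H$ to the corresponding data downstairs. Concretely: the 2-morphisms $H(c_B)$, $H(c_{F(G(B))})$, $H(F(u_{G(B)}))$, $H(F(\Psi_B))$ and so on that appear as coupons in the two string diagrams of the statement are, by our notational conventions introduced just above (the $F(\kappa)$-convention and its extension to naturality 2-isomorphisms and interchangers), literally defined as $H$ applied to the coupons of the first swallowtail equation for $(F,G,u,c,\Phi,\Psi)$, suitably conjugated by the compositor 2-isomorphisms $\phi^H_{-,-}$ and unitors $\phi^H_{-}$ of $H$.

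First I would write out both sides of the claimed equality as composites of 2-morphisms in $\mathfrak{C}$, making every instance of a $\phi^H$ explicit rather than suppressed. Then I would use the coherence axioms for the 2-functor $H$ (associativity/naturality of $\phi^H_{g,f}$ with respect to 2-morphisms, and compatibility of $\phi^H$ with $\phi^F$, $\phi^G$) to move all the $\phi^H$'s to the outermost position, so that each side takes the form $(\text{outer }\phi^H\text{'s})\cdot H(\text{something})\cdot(\text{outer }\phi^H\text{'s})$, where ``something'' is precisely the corresponding side of the original first swallowtail equation for the coherent 2-adjunction $F\dashv G$. This is the step where one invokes that $H$, being a 2-functor, preserves pasting composites up to the coherent system of $\phi^H$'s.

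At that point I would apply $H$ to the first swallowtail equation, which holds in $Hom_{\mathfrak{B}}(F(G(B)),B)$ by hypothesis; since $H$ is a 2-functor it sends that equality of 2-morphisms to an equality of 2-morphisms in $Hom_{\mathfrak{C}}(H(F(G(B))),H(B))$, namely $H(\text{LHS})=H(\text{RHS})$. Combining with the previous step — the outer $\phi^H$'s are the same on both sides, since they depend only on the source/target 1-morphisms, which agree for the two sides of the swallowtail equation — yields the desired equality. I expect the main obstacle to be purely bookkeeping: keeping track of exactly which compositors $\phi^H$ and unitors $\phi^H_{-}$ appear and in which order, and checking that the two sides really do present the \emph{same} outer conjugating 2-isomorphism, so that after cancelling them one is left with precisely $H$ of the swallowtail equation. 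In the string-diagram language this amounts to the observation that the diagrams in the statement are obtained from those in the swallowtail equation by ``pushing everything through $H$'', and the equality of the latter therefore forces the equality of the former; no genuinely new computation is required beyond the coherence theorem for 2-functors.
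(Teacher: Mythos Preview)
Your proposal is correct and follows the same approach as the paper, which simply states that the result ``follows readily from the definitions and the coherence axioms for the 2-functor $H$.'' You have merely spelled out in more detail what that sentence means: unfold the $\phi^H$'s hidden in the graphical conventions, use coherence for $H$ to collect them on the outside, and apply $H$ to the original swallowtail equation.
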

\begin{proof}
The proof follows readily from the definitions and the coherence axioms for the 2-functor $H$.
\end{proof}

\section{Module 2-Categories over Monoidal 2-Categories}\label{sec:module2cat}

\subsection{Definitions}

Let us fix $\mathfrak{C}$, and $\mathfrak{D}$ two monoidal 2-categories. We use $\mathfrak{D}^{\Box op}$ to denote the monoidal 2-category whose underlying 2-category is $\mathfrak{D}$, and with the opposite monoidal structure.

\begin{Definition}\label{def:module2cat}
A left $\mathfrak{C}$-module 2-category is a 2-category $\mathfrak{M}$ equipped with a monoidal 2-functor $H^{\mathfrak{M}}:\mathfrak{C}\rightarrow End(\mathfrak{M})$. A right $\mathfrak{D}$-module 2-category is a 2-category $\mathfrak{M}$ equipped with a monoidal 2-functor $H^{\mathfrak{M}}:\mathfrak{D}^{\Box op}\rightarrow End(\mathfrak{M})$. A $(\mathfrak{C},\mathfrak{D})$-bimodule 2-category is a 2-category $\mathfrak{M}$ equipped with a monoidal 2-functor $H^{\mathfrak{M}}:\mathfrak{C}\times\mathfrak{D}^{\Box op}\rightarrow End(\mathfrak{M})$.
\end{Definition}

Alternatively, a left $\mathfrak{C}$-module 2-category can be defined as follows.

\begin{Notation}
In definition \ref{def:module2catunpacked} below, and throughout the paper, we use the same conventions for $\Diamond$ as those which we have explained in section \ref{sec:prelim} for $\Box$.
\end{Notation}

\begin{Definition} \label{def:module2catunpacked}
Let $\mathfrak{C}$ be an arbitrary monoidal 2-category, and $\mathfrak{M}$ an arbitrary 2-category. A left $\mathfrak{C}$-module 2-category structure on $\mathfrak{M}$ consists of the following data:
\begin{enumerate}
\item A 2-functor $\Diamond:\mathfrak{C}\times \mathfrak{M}\rightarrow \mathfrak{M}$;
\item Two adjoint 2-natural equivalences $\alpha^{\mathfrak{M}}$, and $l^{\mathfrak{M}}$ given on $A,B$ in $\mathfrak{C}$ and $M$ in $\mathfrak{M}$ by 
$$\begin{aligned}
\alpha^{\mathfrak{M}}_{A,B,M}&:(A\Box B)\Diamond M \rightarrow A\Diamond (B\Diamond M),\\
l^{\mathfrak{M}}_{M}&: I\Diamond M\rightarrow M;
\end{aligned}$$
\item Three invertible modifications $\mu^{\mathfrak{M}}$, $\lambda^{\mathfrak{M}}$, and $\pi^{\mathfrak{M}}$ given on $A,B,C$ in $\mathfrak{C}$, and $M$ in $\mathfrak{M}$ by
\end{enumerate}

\begin{center}
$\begin{tikzcd}[sep=tiny]
                           & (A(BC))M \arrow[ld, "\alpha^{\mathfrak{M}}"'] &                                 & ((AB)C)M \arrow[ll, "\alpha 1"'] \arrow[rd, "\alpha^{\mathfrak{M}}"] &                           \\
A((BC)M) \arrow[rrdd, "1\alpha^{\mathfrak{M}}"']  & {}\arrow[rr, "\pi^{\mathfrak{M}}", Rightarrow, shorten >=11.5ex, shorten <=11.5ex ]                                                      &  &  {}                                      & (AB)(CM), \arrow[lldd, " \alpha^{\mathfrak{M}}"] \\
                           &                                                        &                               &                                       &                           \\
                           &                                                        & A(B(CM))                              &                                       &                          
\end{tikzcd}$

\begin{tabular}{@{}c c@{}}
$$\begin{tikzcd}[sep=tiny]
B M \arrow[ddd, equal]\arrow[Rightarrow, rrrddd, "\lambda^{\mathfrak{M}}", shorten > = 2.7ex, shorten < = 2.7ex]  &                                        &    & (I B) M \arrow[ddd, "\alpha^{\mathfrak{M}}"] \arrow[lll, "l 1"'] \\
 &  &    & \\
  &  &  &  \\
B M                                            &                                        &    &  I(BM), \arrow[lll, "l^{\mathfrak{M}}"]
\end{tikzcd}$$

&

$$\begin{tikzcd}[sep=tiny]
(A I) M  \arrow[rrr, "\alpha^{\mathfrak{M}}", ""{name = a}] &  & & A (I M) \arrow[ddd, "1 l^{\mathfrak{M}}"]\\
&  &  &\\
& & &\\
A M \arrow[uuu, "r 1"] & & & A M; \arrow[equal, lll, ""{name = b}]

\arrow[Rightarrow,"\mu^{\mathfrak{M}}", from= a, to = b, shorten > = 2ex, shorten < = 2ex]
\end{tikzcd}$$
\end{tabular}
\end{center}

Subject to the following relations:
\begin{enumerate}
    \item [a.] For every $A,B,C,D$ in $\mathfrak{C}$ and $M$ in $\mathfrak{M}$, we have
\end{enumerate}

\newlength{\module}
\settoheight{\module}{\includegraphics[width=52.5mm]{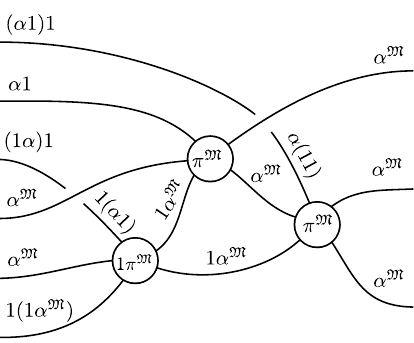}}

\begin{center}
\begin{tabular}{@{}ccc@{}}

\includegraphics[width=52.5mm]{Module2CategoriesPictures/module2cat/2Ostrikmodulepentagonator1.pdf} & \raisebox{0.5\module}{$=$} &

\includegraphics[width=46.5mm]{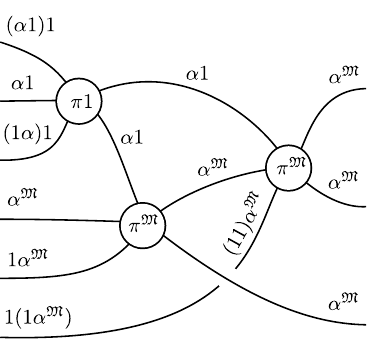}

\end{tabular}
\end{center}

\begin{enumerate}
    \item[] in $Hom_{\mathfrak{M}}((((A\Box B)\Box C)\Box D)\Diamond M,A\Diamond (B\Diamond (C\Diamond (D\Diamond M))))$;

    \item [b.] For every every $A,C$ in $\mathfrak{C}$ and $M$ in $\mathfrak{M}$, we have
    
\settoheight{\module}{\includegraphics[width=45mm]{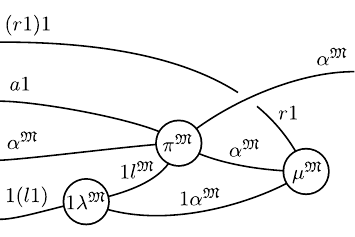}}

\begin{center}
\begin{tabular}{@{}ccc@{}}

\includegraphics[width=45mm]{Module2CategoriesPictures/module2cat/2modulecatunitor1.pdf} & \raisebox{0.45\module}{$=$} &

\includegraphics[width=37.5mm]{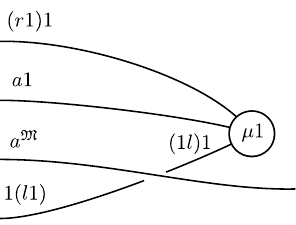}

\end{tabular}
\end{center}

in $Hom_{\mathfrak{M}}((A\Box C)\Diamond M,A\Diamond (C\Diamond M))$;
\item [c.] For every every $A,B$ in $\mathfrak{C}$ and $M$ in $\mathfrak{M}$, we have:

\settoheight{\module}{\includegraphics[width=45mm]{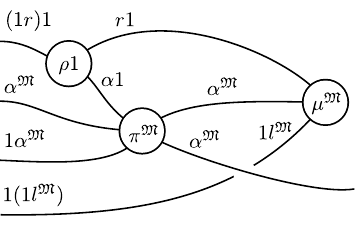}}

\begin{center}
\begin{tabular}{@{}ccc@{}}

\includegraphics[width=45mm]{Module2CategoriesPictures/module2cat/2modulecatunitor3.pdf} & \raisebox{0.45\module}{$=$} &

\includegraphics[width=37.5mm]{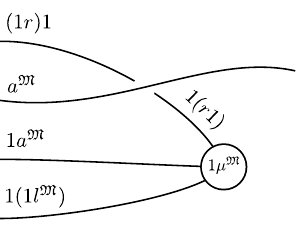}

\end{tabular}
\end{center}

in $Hom_{\mathfrak{M}}((A\Box B)\Diamond M,A\Diamond (B\Diamond M))$.
\end{enumerate}
\end{Definition}

\begin{Example}
The 2-category $\mathfrak{C}$ admits a canonical left $\mathfrak{C}$-module structure induced by its monoidal structure.
\end{Example}

\begin{Lemma}\label{lem:module2cat}
Definitions \ref{def:module2cat} and \ref{def:module2catunpacked} are equivalent. More precisely, given a 2-category $\mathfrak{M}$, there is a bijection between the set of left $\mathfrak{C}$-module structure according to definition \ref{def:module2cat} and the set of left $\mathfrak{C}$-module structure according to definition \ref{def:module2catunpacked}.
\end{Lemma}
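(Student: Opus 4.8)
The plan is to recognise both definitions as two ways of organising one and the same underlying datum — a monoidal 2-functor — and to produce the asserted bijection by explicit unpacking; no genuinely new idea is needed beyond careful bookkeeping. First I would recall the hom--tensor correspondence for 2-categories: giving a 2-functor $\Diamond\colon\mathfrak{C}\times\mathfrak{M}\rightarrow\mathfrak{M}$ is the same as giving a 2-functor $H^{\mathfrak{M}}\colon\mathfrak{C}\rightarrow End(\mathfrak{M})$, via $H^{\mathfrak{M}}(A)=A\Diamond(-)$ and, conversely, $\Diamond(A,M)=H^{\mathfrak{M}}(A)(M)$. I would argue this is a bijection at the level of data, not merely an equivalence of 2-categories: unpacking a 2-functor out of a product produces the two partial 2-functors together with interchange 2-isomorphisms and their coherences, which is exactly what a 2-functor valued in $End(\mathfrak{M})$ records once one evaluates it on $1$- and $2$-morphisms and imposes $2$-naturality; the structure $2$-isomorphisms transport back and forth verbatim.

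Next I would unpack the remaining monoidal data. Following \cite{SP}, a monoidal $2$-functor $H^{\mathfrak{M}}$ carries an adjoint $2$-natural equivalence $\chi$ with components $\chi_{A,B}\colon H^{\mathfrak{M}}(A)\circ H^{\mathfrak{M}}(B)\Rightarrow H^{\mathfrak{M}}(A\Box B)$ in $End(\mathfrak{M})$; evaluating at $M$ gives a $1$-equivalence $A\Diamond(B\Diamond M)\rightarrow(A\Box B)\Diamond M$, whose chosen adjoint pseudo-inverse I would take to be $\alpha^{\mathfrak{M}}_{A,B,M}$. Similarly the unit equivalence $\iota\colon Id_{\mathfrak{M}}\Rightarrow H^{\mathfrak{M}}(I)$ evaluates to a $1$-equivalence whose adjoint pseudo-inverse is $l^{\mathfrak{M}}_{M}$; that Definition \ref{def:module2catunpacked} demands \emph{adjoint} $2$-natural equivalences is precisely what keeps this a bijection, since $\chi$ and $\iota$ come equipped with adjunction data. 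The invertible modifications of $H^{\mathfrak{M}}$ (the associativity coherence and the two unit coherences) are modifications in $End(\mathfrak{M})$, hence the same as families of $2$-morphisms of $\mathfrak{M}$ natural in the argument; evaluating them objectwise yields $\pi^{\mathfrak{M}}$, $\mu^{\mathfrak{M}}$, $\lambda^{\mathfrak{M}}$, and conversely. The delicate point here is to reconcile the orientation conventions of \cite{SP} — which way $\chi$ and $\iota$ point, and which composite each modification fills — with the conventions fixed in Definition \ref{def:module2catunpacked}.

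Finally, the defining coherence axioms of a monoidal $2$-functor are equalities of modifications in $End(\mathfrak{M})$, and such an equality holds if and only if it holds after evaluation at every object $M$ of $\mathfrak{M}$. A term-by-term comparison of these evaluated axioms with the string-diagram relations a, b, c of Definition \ref{def:module2catunpacked} shows they coincide. Conversely, any package of data as in Definition \ref{def:module2catunpacked} reassembles, via Step 1 and the adjoint-inverse dictionary of Step 2, into a monoidal $2$-functor $\mathfrak{C}\rightarrow End(\mathfrak{M})$. Hence the two assignments are mutually inverse, which is the claimed bijection.

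The main obstacle is entirely in the bookkeeping of Steps 2 and 3: reconciling the sign and orientation conventions of \cite{SP} with those of Definition \ref{def:module2catunpacked}, and then verifying that the monoidal-$2$-functor axioms, once evaluated at an object, reproduce precisely the pasting diagrams a, b, c. This is mechanical but lengthy, and it is the only place an error could realistically hide. A secondary point worth spelling out is the claim in Step 1 that the hom--tensor correspondence is a literal bijection of data: this holds because we compare $2$-functors rather than families of them up to equivalence, and each piece of structure is transported on the nose.
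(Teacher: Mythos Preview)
Your approach is essentially the same as the paper's: use the hom--tensor isomorphism $Hom(\mathfrak{C},End(\mathfrak{M}))\cong Hom(\mathfrak{C}\times\mathfrak{M},\mathfrak{M})$ to transport $\Diamond$, then read off $\alpha^{\mathfrak{M}}$, $l^{\mathfrak{M}}$ from the adjoint pseudo-inverses of $\chi^H$, $\iota^H$, and $\pi^{\mathfrak{M}}$, $\lambda^{\mathfrak{M}}$, $\mu^{\mathfrak{M}}$ from the modifications $\omega^H$, $\gamma^H$, $\delta^H$, checking that the monoidal-2-functor axioms evaluate to axioms a, b, c. The one point the paper makes explicit that you leave buried in ``reconciling orientation conventions'' is that $End(\mathfrak{M})$, while not strict, has coherence 1-morphisms whose components on objects are identities (see \cite{Gur2}); this is what guarantees that the evaluated modifications land with the correct source and target shapes, and is worth stating rather than discovering mid-computation.
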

\begin{proof}
Given a left $\mathfrak{C}$-module $(\mathfrak{M},H^{\mathfrak{M}})$ as in definition \ref{def:module2cat}, we can use the canonical isomorphism of 2-categories
$$Hom(\mathfrak{C},End(\mathfrak{M}))\cong Hom(\mathfrak{C}\times \mathfrak{M},\mathfrak{M})$$
to get a 2-functor $\Diamond:\mathfrak{C}\times \mathfrak{M}\rightarrow \mathfrak{M}$. Because $H^{\mathfrak{M}}$ is monoidal, we actually get more: We have an adjoint 2-natural equivalence $\chi^{H}$, given on $A,B$ in $\mathfrak{C}$ by $\chi^H_{A,B}:H^{\mathfrak{M}}(A)\circ H^{\mathfrak{M}}(B)\rightarrow H^{\mathfrak{M}}(A\Box B)$. Evaluating at $M$ in $\mathfrak{M}$ yields the components of an adjoint 2-natural equivalence $(\alpha^{\mathfrak{M}}_{A,B,M})^{\bullet}:A\Diamond (B\Diamond M)\rightarrow (A\Box B)\Diamond M$. Similarly, after evaluating at $M$ in $\mathfrak{M}$, the adjoint 2-natural equivalence $\iota^H: Id_{\mathfrak{M}}\Rightarrow H(I)$ induces an adjoint 2-natural equivalence with components $(l^{\mathfrak{M}}_M)^{\bullet}: M\rightarrow I\Diamond M$.

Now, recall that the monoidal 2-category $End(\mathfrak{M})$, while not strict, has coherence 1-morphisms whose components on objects are given by identity 1-morphisms. Furthermore, their components on 1-morphisms are canonical coherence 2-isomorphisms (see \cite{Gur2}). This implies that some of the 1-morphisms present in the target or the source of $\omega^H$, $\gamma^H$, and $\delta^H$ can be safely omitted. This can be used to show that the invertible modifications $\omega^H$, $\gamma^H$, and $\delta^H$ define invertible modifications $\pi^{\mathfrak{M}}$, $\lambda^{\mathfrak{M}}$, and $\mu^{\mathfrak{M}}$, which have the desired source and target. Finally, it is not hard to check that the monoidal coherence axioms on $H^{\mathfrak{M}}$ imply that axioms a, b, and c in definition \ref{def:module2catunpacked} are satisfied. The above procedure can also be carried out in reverse, proving the result.
\end{proof}

One can similarly unpack the definitions of a right $\mathfrak{D}$-module 2-category, and that of a $(\mathfrak{C},\mathfrak{D})$-bimodule 2-category. We leave the details to the reader. Now, we define the appropriate notions of a left $\mathfrak{C}$-module 2-functor.

\begin{Definition}\label{def:module2functor}
Let $(\mathfrak{M},H^{\mathfrak{M}})$, $(\mathfrak{N},H^{\mathfrak{N}})$ be two left $\mathfrak{C}$-module 2-categories. A left $\mathfrak{C}$-module 2-functor $\mathfrak{M}\rightarrow \mathfrak{N}$ is a monoidal 2-functor $F:\mathfrak{M}\rightarrow \mathfrak{N}$ together with an monoidal adjoint 2-natural equivalence $f$ fitting into the following diagram $$\begin{tikzcd}[sep=small]
End(\mathfrak{M}) \arrow[dd, "F^*"'] \arrow[rrdd, Rightarrow, "f", shorten > = 3ex, shorten < = 3ex] &  & \mathfrak{C} \arrow[ll, "H^{\mathfrak{M}}"'] \arrow[dd, "H^{\mathfrak{N}}"] \\
                                                       &  &                                                                             \\
{Hom(\mathfrak{M},\mathfrak{N})}                       &  & End(\mathfrak{N}) \arrow[ll, "F_*"]                                        
\end{tikzcd}$$ where $F_*$ denote the 2-functor given by precomposition by $F$, and $F^*$ denotes the 2-functor given by postcomposition by $F$.
\end{Definition}

\begin{Definition}\label{def:module2fununpacked}
Let $\mathfrak{M}$ and $\mathfrak{N}$ be two left $\mathfrak{C}$-module 2-categories as in definition \ref{def:module2catunpacked}. A left $\mathfrak{C}$-module 2-functor is a 2-functor $F:\mathfrak{M}\rightarrow \mathfrak{N}$ together with:
\begin{enumerate}
    \item A 2-natural adjoint equivalence $\chi^F$ given on $A$ in $\mathfrak{C}$, and $M$ in $\mathfrak{M}$ by $$\chi^F_{A,M}:A\Diamond^{\mathfrak{N}} F(M)\rightarrow F(A\Diamond^{\mathfrak{M}}M);$$
    \item Two invertible modifications $\omega^F$, and $\gamma^F$ given on $A,B$ in $\mathfrak{C}$ and $M$ in $\mathfrak{M}$ by 
\end{enumerate}

\begin{center}
\begin{tabular}{c}
 $$\begin{tikzcd}
 A (B F(M)) \arrow[rr, "1\chi^F", ""{name=a}] &   {} \arrow[Rightarrow, d, "\omega^F", shorten > = 1ex, shorten < = 1ex]    & A F(BM) \arrow[d, "\chi^F"] \\
(A B)F(M) \arrow[r, "\chi^F"'] \arrow[u, "\alpha^{\mathfrak{N}}"] &    F((AB)M) \arrow[r, "F(\alpha^{\mathfrak{M}})"']                                                                                                  &   F(A (BM)),\end{tikzcd}$$ \\

$$\begin{tikzcd}[sep=small]
I F(M) \arrow[dd, "\chi^F"']\arrow[rr, equal]  &                                        &    I F(M) \arrow[dd, "l^{\mathfrak{N}}"]  \\
{}\arrow[Rightarrow, rr,"\gamma^F", shorten > = 5.5ex, shorten < = 5.5ex] &  &  {}  \\
F(IM) \arrow[rr,"F(l^{\mathfrak{M}})"']  &      & F(M); 
\end{tikzcd}$$
\end{tabular}

\end{center}
Subject to the following relations:
\begin{enumerate}
    \item[a.] For every $A,B,C$ in $\mathfrak{C}$, and $M$ in $\mathfrak{M}$, the equality
\end{enumerate}

\settoheight{\module}{\includegraphics[width=52.5mm]{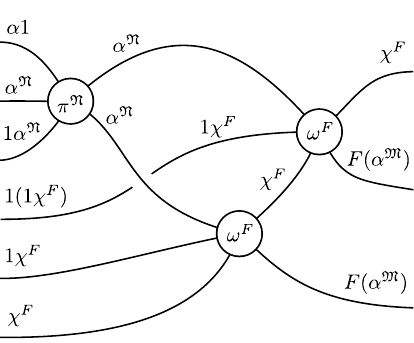}}

\begin{center}
\begin{tabular}{@{}ccc@{}}

\includegraphics[width=52.5mm]{Module2CategoriesPictures/module2fun/module2funpent1.pdf} & \raisebox{0.5\module}{$=$} &

\includegraphics[width=56.25mm]{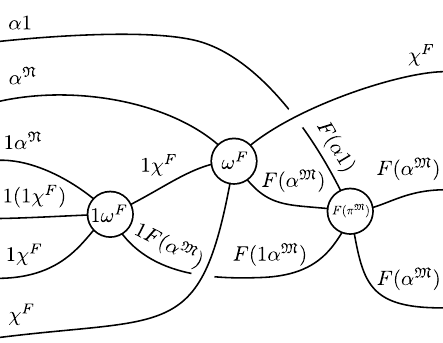}

\end{tabular}
\end{center}

\begin{enumerate}
    \item[] holds in $Hom_{\mathfrak{N}}(((A\Box B)\Box C)\Diamond F(M), F(A\Diamond (B\Diamond (C\Diamond M))))$;

    \item[b.] For every $A$ in $\mathfrak{C}$, and $M$ in $\mathfrak{M}$, the equality

\settoheight{\module}{\includegraphics[width=37.5mm]{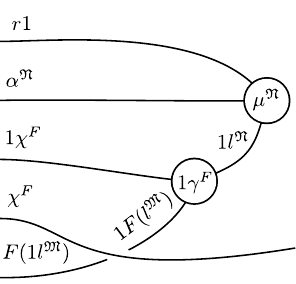}}

\begin{center}
\begin{tabular}{@{}ccc@{}}

\includegraphics[width=37.5mm]{Module2CategoriesPictures/module2fun/module2fununit1.pdf} & \raisebox{0.45\module}{$=$} &

\includegraphics[width=37.5mm]{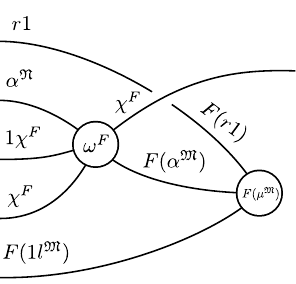}

\end{tabular}
\end{center}
    
    holds in $Hom_{\mathfrak{N}}(A\Diamond F(M), F(A\Diamond M))$;
    
    \item[c.] For every $B$ in $\mathfrak{C}$, and $M$ in $\mathfrak{M}$, the equality

\settoheight{\module}{\includegraphics[width=30mm]{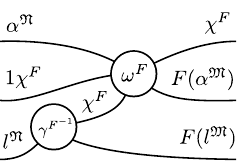}}

\begin{center}
\begin{tabular}{@{}ccc@{}}

\includegraphics[width=30mm]{Module2CategoriesPictures/module2fun/module2fununit3.pdf} & \raisebox{0.45\module}{$=$} &

\includegraphics[width=41.25mm]{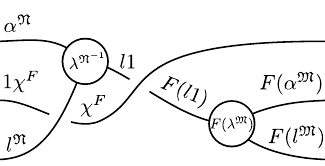}

\end{tabular}
\end{center}

holds in $Hom_{\mathfrak{N}}((I\Box B)\Diamond F(M), F(B\Diamond M)).$

\end{enumerate}
\end{Definition}

\begin{Lemma}
Definitions \ref{def:module2functor} and \ref{def:module2fununpacked} are equivalent.
\end{Lemma}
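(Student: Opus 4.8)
The plan is to unpack the monoidal structure on $H^{\mathfrak{M}}:\mathfrak{C}\to End(\mathfrak{M})$ and match it term by term with the data and axioms of Definition \ref{def:module2fununpacked}, exactly as was done for module 2-categories in Lemma \ref{lem:module2cat}. Concretely, start from a left $\mathfrak{C}$-module 2-functor $(F,f)$ in the sense of Definition \ref{def:module2functor}. The monoidal adjoint 2-natural equivalence $f$ is a 2-natural transformation $F^*\circ H^{\mathfrak{M}}\Rightarrow F_*\circ H^{\mathfrak{N}}$ of monoidal 2-functors $\mathfrak{C}\to Hom(\mathfrak{M},\mathfrak{N})$; its component at $A\in\mathfrak{C}$ is a 2-natural equivalence $F\circ H^{\mathfrak{M}}(A)\Rightarrow H^{\mathfrak{N}}(A)\circ F$ in $Hom(\mathfrak{M},\mathfrak{N})$, whose component at $M\in\mathfrak{M}$ is a 1-morphism $A\Diamond^{\mathfrak{N}} F(M)\to F(A\Diamond^{\mathfrak{M}} M)$ — this is $\chi^F_{A,M}$. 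The fact that $f$ is a \emph{monoidal} 2-natural transformation supplies two invertible modifications (one for the monoidal-structure constraint $\chi$ on each of $F^*H^{\mathfrak{M}}$, $F_*H^{\mathfrak{N}}$, and one for the unit constraints $\iota$); evaluating these at $M$, and using as in Lemma \ref{lem:module2cat} that the coherence 1-morphisms of $End(\mathfrak{M})$, $End(\mathfrak{N})$ and $Hom(\mathfrak{M},\mathfrak{N})$ have identity components on objects, yields invertible modifications with exactly the sources and targets of $\omega^F$ and $\gamma^F$.

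Next I would check that the three axioms a, b, c of Definition \ref{def:module2fununpacked} are precisely the evaluations at $M$ of the two coherence axioms in the definition of a monoidal 2-natural transformation (the ``associativity'' axiom relating $\omega$ to the associators, which gives axiom a, and the two ``unit'' axioms relating $\gamma$ to the left/right unitors, which give axioms b and c). This is the same bookkeeping as in the module-2-category case: one rewrites the axioms of \cite{SP} for monoidal 2-natural transformations in $End(\mathfrak{M})$, $End(\mathfrak{N})$, $Hom(\mathfrak{M},\mathfrak{N})$, discards the coherence 1-morphisms whose object-components are identities, and reads off the stated pasting equalities. Conversely, given $(F,\chi^F,\omega^F,\gamma^F)$ as in Definition \ref{def:module2fununpacked}, I would reverse this: package $\chi^F$ into a 2-natural transformation $f$ via the canonical isomorphism $Hom(\mathfrak{C},Hom(\mathfrak{M},\mathfrak{N}))\cong Hom(\mathfrak{C}\times\mathfrak{M},\mathfrak{N})$, promote $\omega^F$, $\gamma^F$ to the modifications making $f$ monoidal, and verify the monoidal-2-natural-transformation axioms from axioms a, b, c. Finally, observe that these two assignments are mutually inverse, establishing the claimed equivalence (bijection of structures).

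The only genuinely delicate point — and the main obstacle — is the careful matching of coherence data across the three ambient monoidal 2-categories $End(\mathfrak{M})$, $End(\mathfrak{N})$, and $Hom(\mathfrak{M},\mathfrak{N})$: these are not strict, so the interchangers and the structure 2-isomorphisms of the 2-functors $F_*$ and $F^*$ intervene, and one must verify that the ``safely omitted'' coherence 1-morphisms really do disappear after evaluation at an object, and that no hidden 2-isomorphism is lost in doing so. This is exactly the step flagged in the proof of Lemma \ref{lem:module2cat}, and it is handled the same way here; everything else is a mechanical, if lengthy, translation between the packed and unpacked descriptions, so I would simply refer to the proof of Lemma \ref{lem:module2cat} and indicate that the identical argument applies mutatis mutandis, leaving the diagram-chase to the reader.
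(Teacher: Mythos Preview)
Your proposal is correct and matches the paper's own approach exactly: the paper's proof simply reads ``This is similar to the proof of lemma \ref{lem:module2cat}. We leave the details to the reader.'' You have done precisely that, only with a fuller sketch of the unpacking (identifying $\chi^F$, $\omega^F$, $\gamma^F$ with the components and coherence modifications of the monoidal 2-natural equivalence $f$, and matching the axioms), and you correctly flag the same delicate point about the non-strict coherence data of $End(\mathfrak{M})$, $End(\mathfrak{N})$, $Hom(\mathfrak{M},\mathfrak{N})$ that the proof of Lemma \ref{lem:module2cat} already handles. One small slip: you state the direction of $f$ as $F^*\circ H^{\mathfrak{M}}\Rightarrow F_*\circ H^{\mathfrak{N}}$ and then immediately give its component the opposite way, $A\Diamond^{\mathfrak{N}} F(M)\to F(A\Diamond^{\mathfrak{M}} M)$; since $f$ is an adjoint equivalence this is harmless, but you should make the direction consistent.
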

\begin{proof}
This is similar to the proof of lemma \ref{lem:module2cat}. We leave the details to the reader.
\end{proof}

Finally, we define left $\mathfrak{C}$-module 2-natural transformations and left $\mathfrak{C}$-module modifications. We only give one definition as we shall not use the other one.

\begin{Definition}\label{def:module2nat}
Let $F,G:\mathfrak{M}\rightarrow \mathfrak{N}$ be two left $\mathfrak{C}$-module 2-functors as in definition \ref{def:module2fununpacked}. A left $\mathfrak{C}$-module structure on a 2-natural transformation $\theta:F\Rightarrow G$ consists of an invertible modification $\Pi^{\theta}$ given on $A$ in $\mathfrak{C}$, and $M$ in $\mathfrak{M}$ by $$\begin{tikzcd}[sep=tiny]
A G(M) \arrow[ddd, "\chi^G"']\arrow[Rightarrow, rrrddd, "\Pi^{\theta}", shorten > = 2ex, shorten < = 2ex]  &                                        &    & A F(M) \arrow[ddd, "\chi^F"] \arrow[lll, "1 \theta"'] \\
 &  &    & \\
  &  &  &  \\
G(AM)                                            &                                        &    &  F(AM); \arrow[lll, "\theta"]
\end{tikzcd}$$

Subject to the following relations:

\begin{enumerate}
    \item[a.] For every $A,B$ in $\mathfrak{C}$, and $M$ in $\mathfrak{M}$, the equality
\end{enumerate}

\settoheight{\module}{\includegraphics[width=48.75mm]{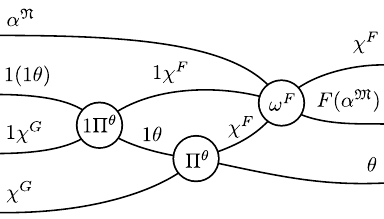}}

\begin{center}
\begin{tabular}{@{}ccc@{}}

\includegraphics[width=48.75mm]{Module2CategoriesPictures/module2natmod/module2nat1.pdf} & \raisebox{0.45\module}{$=$} &

\includegraphics[width=52.5mm]{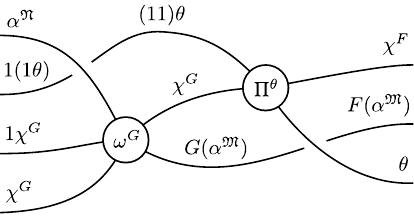}

\end{tabular}
\end{center}

\begin{enumerate}
    
    \item[] holds in $Hom_{\mathfrak{N}}((A\Box B)\Diamond^{\mathfrak{N}} F(M), G(A\Diamond^{\mathfrak{M}}(B\Diamond^{\mathfrak{N}}M)))$;
    
    \item[b.] For every $M$ in $\mathfrak{M}$, the equality

\settoheight{\module}{\includegraphics[width=45mm]{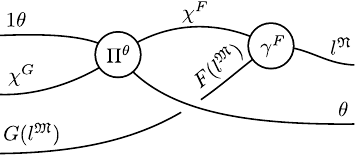}}

\begin{center}
\begin{tabular}{@{}ccc@{}}

\includegraphics[width=45mm]{Module2CategoriesPictures/module2natmod/module2nat3.pdf} & \raisebox{0.45\module}{$=$} &

\includegraphics[width=30mm]{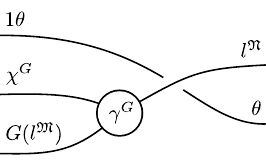}

\end{tabular}
\end{center}

holds in $Hom_{\mathfrak{N}}(I\Diamond^{\mathfrak{N}} F(M), G(M)).$

\end{enumerate}
\end{Definition}

\begin{Definition}\label{def:module2modif}
Let $\theta,\sigma:F\Rightarrow G$ be two left $\mathfrak{C}$-module 2-natural transformations. A left $\mathfrak{C}$-module modification is a modification $\Xi:\sigma\Rrightarrow \theta$ such that for every $A$ in $\mathfrak{C}$, and $M$ in $\mathfrak{M}$ the equality

\settoheight{\module}{\includegraphics[width=30mm]{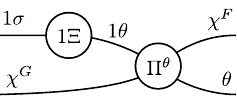}}

\begin{center}
\begin{tabular}{@{}ccc@{}}

\includegraphics[width=30mm]{Module2CategoriesPictures/module2natmod/modulemodif1.pdf} & \raisebox{0.4\module}{$=$} &

\includegraphics[width=30mm]{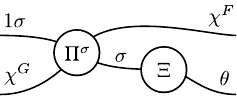}

\end{tabular}
\end{center}

holds in $Hom_{\mathfrak{N}}(A\Diamond^{\mathfrak{N}} F(M), G(A\Diamond^{\mathfrak{M}}M)).$
\end{Definition}

\begin{Convention}\label{con:linearity}
Let us now fix a ring $R$, and assume that $\mathfrak{C}$ is an $R$-linear monoidal 2-category, meaning that its monoidal product $\Box:\mathfrak{C}\times\mathfrak{C}\rightarrow \mathfrak{C}$ is $R$-bilinear. Then, the definitions of left $\mathfrak{C}$-module 2-category and left $\mathfrak{C}$-module 2-functors have to be slightly modified. Namely, in this case, a left $\mathfrak{C}$-module 2-category is an $R$-linear 2-category $\mathfrak{M}$ together with an $R$-linear monoidal 2-functor $\mathfrak{C}\rightarrow End_{R}(\mathfrak{M})$ or, equivalently, an $R$-bilinear 2-functor $\mathfrak{C}\times\mathfrak{M}\rightarrow \mathfrak{M}$ satisfying the axioms of definition \ref{def:module2catunpacked}. Similarly, left $\mathfrak{C}$-module 2-functors are required to be $R$-linear. Assuming that $\mathfrak{D}$ is an $R$-linear monoidal 2-category, we adopt the same conventions when discussing right $\mathfrak{D}$-module 2-categories. Finally, a $(\mathfrak{C},\mathfrak{D})$-bimodule 2-category is an $R$-linear 2-category $\mathfrak{M}$ together with an $R$-bilinear monoidal 2-functor $\mathfrak{C}\times \mathfrak{D}^{\Box op}\rightarrow End_{R}(\mathfrak{M})$ or, equivalently, an $R$-trilinear 2-functor $\mathfrak{C}\times\mathfrak{M}\times\mathfrak{D}^{\Box op}\rightarrow \mathfrak{M}$ satisfying certain axioms.
\end{Convention}

Let us fix an algebraically closed field $\mathds{k}$ of characteristic zero, and assume that $\mathfrak{C}$ is a multifusion 2-category over $\mathds{k}$ (see \cite{DR}, or \cite{D2}). We now give elementary examples of finite semisimple left $\mathfrak{C}$-module 2-categories.

\begin{Example}\label{ex:2Vectmodule}
Let $\mathfrak{M}$ be a finite semisimple 2-category. Then, $\mathfrak{M}$ is canonically a finite semisimple left $\mathbf{2Vect}$-module 2-category. Namely, the linear 2-category $End(\mathfrak{M})$ is a finite semisimple 2-category, as can be seen using the equivalence of theorem 2.2.2 of \cite{D1}. Now, the inclusion of the monoidal unit in $End(\mathfrak{M})$ is clearly a monoidal 2-functor, and thanks to the fact that the target is finite semisimple, it can be extended to a monoidal 2-functor $H:\mathbf{2Vect}\rightarrow End(\mathfrak{M})$. (In fact, this monoidal 2-functor is unique up to equivalence.) Using lemma \ref{lem:module2cat}, this corresponds to the left action given by letting $\mathbf{Vect}^{\oplus n}$ in $\mathbf{2Vect}$ act on $M$ in $\mathfrak{M}$ by $$\mathbf{Vect}^{\oplus n}\Diamond M = M^{\boxplus n}.$$ Similarly, one can define a right $\mathbf{2Vect}$-module structure on $\mathfrak{M}$. Further, these left and right action are suitably compatible providing us with a canonical $(\mathbf{2Vect},\mathbf{2Vect})$-bimodule structure on $\mathfrak{M}$.
\end{Example}

\begin{Example}\label{ex:2VectGmodule}
Fix $G$ a finite group, it is not hard to see that finite semisimple left $\mathbf{2Vect}_G$-module 2-categories are precisely finite semisimple 2-categories equipped with a $G$-action. As an example, one can consider the trivial $G$-action on $\mathbf{2Vect}$. This corresponds to the left $\mathbf{2Vect}_G$-module structure given by the forgetful monoidal 2-functor $\mathbf{2Vect}_G\rightarrow \mathbf{2Vect}$.
\end{Example}

\subsection{Properties}

We fix $\mathfrak{C}$ a monoidal 2-cateogry. We begin by proving that some of the objects defined in the previous susbection form a 2-category.

\begin{Proposition}\label{prop:hom2cat}
Let $\mathfrak{M}$, and $\mathfrak{N}$ be two left $\mathfrak{C}$-module 2-categories. Left $\mathfrak{C}$-module 2-functors $\mathfrak{M}\rightarrow \mathfrak{N}$, left $\mathfrak{C}$-module 2-natural transformations, and left $\mathfrak{C}$-module modifications form a 2-category, which we denote by $Hom_{\mathfrak{C}}(\mathfrak{M},\mathfrak{N})$.
\end{Proposition}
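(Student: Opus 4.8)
The plan is to verify that the data of Definitions \ref{def:module2fununpacked}, \ref{def:module2nat}, and \ref{def:module2modif} assemble into a 2-category by exhibiting the composition operations and checking the axioms. Since left $\mathfrak{C}$-module 2-functors, 2-natural transformations, and modifications are, in particular, ordinary 2-functors, 2-natural transformations, and modifications, the underlying data already forms a 2-category, namely a sub-collection of $Hom(\mathfrak{M},\mathfrak{N})$. Thus the only thing to do is to equip the various composites with the extra module structure and check that this structure satisfies the coherence relations (a, b, c) and (a, b) and that composition is strictly associative and unital at the level of this extra data (which is automatic once we observe the extra data is itself ``modification-like'' and composed as in $Hom(\mathfrak{M},\mathfrak{N})$).

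Concretely, I would proceed as follows. First, given composable left $\mathfrak{C}$-module 2-functors $F:\mathfrak{M}\rightarrow\mathfrak{N}$ and $F':\mathfrak{N}\rightarrow\mathfrak{P}$, define the module structure on $F'\circ F$ by setting $\chi^{F'\circ F}_{A,M}$ to be the composite $F'(\chi^F_{A,M})\circ \chi^{F'}_{A,F(M)}$ (suitably pasted with the structure 2-isomorphisms $\phi^{F'}$), and define $\omega^{F'\circ F}$ and $\gamma^{F'\circ F}$ by pasting $\omega^{F'}$, $F'(\omega^F)$ (in the sense of our convention for the image of a 2-morphism under a 2-functor) and similarly for $\gamma$. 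One checks that these are invertible modifications with the correct source and target, and that relations a, b, c of Definition \ref{def:module2fununpacked} for $F'\circ F$ follow from those for $F$ and $F'$ together with the coherence axioms for $F'$; this is a string-diagram computation. The identity left $\mathfrak{C}$-module 2-functor on $\mathfrak{M}$ has $\chi$, $\omega$, $\gamma$ given by (composites of) identity 2-isomorphisms. Next, for left $\mathfrak{C}$-module 2-natural transformations, I would define horizontal and vertical composition of the modifications $\Pi^{\theta}$ in the evident way — vertical composition by pasting $\Pi^{\theta}$ with $\Pi^{\sigma}$, horizontal composition (whiskering) by pasting $\Pi^{\theta}$ with the image under a 2-functor of the relevant data and with $\omega$, $\gamma$ — and check relations a, b of Definition \ref{def:module2nat} are preserved. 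Finally, for modifications the defining condition of Definition \ref{def:module2modif} is closed under vertical and horizontal composition, essentially immediately.

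Having defined all composites, I would observe that interchange, associativity, and unitality for these compositions hold because they hold for the underlying 2-functors, 2-natural transformations, and modifications in $Hom(\mathfrak{M},\mathfrak{N})$, and because the extra module data is composed by pasting operations that inherit the same strictness properties (pasting of 2-morphisms in $\mathfrak{N}$ and application of 2-functors, which respect composition up to the coherence 2-isomorphisms that we suppress). Hence the forgetful assignment to $Hom(\mathfrak{M},\mathfrak{N})$ is a (non-full, in general) sub-2-category inclusion, and $Hom_{\mathfrak{C}}(\mathfrak{M},\mathfrak{N})$ is a 2-category.

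I expect the main obstacle to be purely bookkeeping: namely, verifying that the pasted 2-isomorphisms defining $\omega^{F'\circ F}$, $\gamma^{F'\circ F}$, and the composite $\Pi$'s genuinely satisfy the pentagon-type relation (relation a) in each definition. These are the kind of string-diagram manipulations that, while conceptually routine, require careful tracking of interchangers $\phi^{\Box}$, the naturality 2-isomorphisms, and the 2-functoriality constraints $\phi^{F}$; none of it is deep, but it is lengthy. There is no genuine mathematical difficulty here, which is presumably why the author is content to state the proposition and defer the verification. In a full write-up I would either carry out one representative string-diagram check (say relation a for $F'\circ F$) in detail and assert the rest are analogous, or simply remark that all checks are straightforward from the definitions and coherence axioms.
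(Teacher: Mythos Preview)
Your overall idea—that the underlying 2-functors, 2-natural transformations, and modifications already form a 2-category, and one only needs to check that the module structure is preserved under composition—is correct and matches the paper's approach. However, you have misread the scope of the proposition: $Hom_{\mathfrak{C}}(\mathfrak{M},\mathfrak{N})$ is a single Hom 2-category with \emph{fixed} source and target, so its objects are the 2-functors, its 1-morphisms are the 2-natural transformations, and its 2-morphisms are the modifications. There is no composition of 2-functors to define, nor any horizontal composition of 2-natural transformations across different Hom 2-categories; that material belongs to the ambient 3-category (which the paper mentions only in a remark). So the paragraph about $F'\circ F$, $\chi^{F'\circ F}$, $\omega^{F'\circ F}$, etc., is irrelevant here and should be dropped.

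What remains—defining the identity $\mathfrak{C}$-module 2-natural transformation on a fixed $F$, defining vertical composition of $\Pi^\theta$ and $\Pi^\xi$, and checking that modifications are closed under composition—is exactly what the paper does. The paper sharpens your ``associativity and unitality are inherited'' observation to a precise equality: with the obvious pasting definition of $\Pi^{\xi\cdot\theta}$, one has literally $\Pi^{(\zeta\cdot\xi)\cdot\theta}=\Pi^{\zeta\cdot(\xi\cdot\theta)}$ and $\Pi^{Id_G^{\mathfrak{C}}\cdot\theta}=\Pi^{\theta}=\Pi^{\theta\cdot Id_F^{\mathfrak{C}}}$, so the associator and unitors can be taken to be identity modifications and the pentagon and triangle axioms are trivial. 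This is cleaner than appealing to a ``sub-2-category inclusion,'' which is in any case not quite the right description: the 1-morphisms carry the extra \emph{structure} $\Pi^\theta$, not merely a property, so the forgetful map to $Hom(\mathfrak{M},\mathfrak{N})$ is a 2-functor but not an inclusion of a sub-2-category.
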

\begin{proof}
Let us begin by observing that left $\mathfrak{C}$-module modifications between two fixed left $\mathfrak{C}$-module 2-natural transformations form a 1-category. Let us now explain how to construct the desired 2-category structure. When doing so, we make use of our convention not to write down the coherence 2-isomorphisms coming from the axioms of a 2-category. Given a $\mathfrak{C}$-module 2-functor $F:\mathfrak{M}\rightarrow \mathfrak{N}$, the identity left $\mathfrak{C}$-module 2-natural transformation $Id_F^{\mathfrak{C}}$ is the identity 2-natural transformation $Id_F:F\Rightarrow F$ equipped with the canonical modification $\Pi^{Id_F^{\mathfrak{C}}}:\chi^F \circ (Id_{\mathfrak{C}}\Diamond^{\mathfrak{M}} Id_{\mathfrak{M}})\Rrightarrow Id_{\mathfrak{N}}\circ\chi^F$ given on $A$ in $\mathfrak{C}$ and $M$ in $\mathfrak{M}$ by $(\Pi^{Id_F^{\mathfrak{C}}})_{A,M}:= \chi^F_{A,M}\circ \phi^{\Diamond^{\mathfrak{N}}}_{A,F(M)}.$

Given $\mathfrak{C}$-module 2-functors $F,G,H:\mathfrak{M}\rightarrow \mathfrak{N}$, and left $\mathfrak{C}$-module 2-natural transformations $\theta:F\Rightarrow G$, and $\xi:G\Rightarrow H$, their composition is given by equipping the composite 2-natural transformation $\xi \cdot \theta$ with the invertible modification

\settoheight{\module}{\includegraphics[width=45mm]{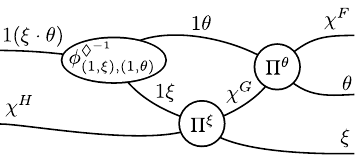}}

\begin{center}
\begin{tabular}{@{}cc@{}}

 \raisebox{0.45\module}{$\Pi^{\xi \cdot \theta}:=$} &

\includegraphics[width=45mm]{Module2CategoriesPictures/module2natmod/composition.pdf}.

\end{tabular}
\end{center}

Using this definition, it follows that if we are given another $\mathfrak{C}$-module 2-functor $K:\mathfrak{M}\rightarrow \mathfrak{N}$, and left $\mathfrak{C}$-module 2-natural transformations $\zeta:H\Rightarrow K$, then $$\Pi^{(\zeta\cdot\xi)\cdot\theta} = \Pi^{\zeta\cdot (\xi\cdot\theta)}.$$ This means that, as the invertible left $\mathfrak{C}$-module modification witnessing that composition is associative, we can take the identity modification $(\zeta\cdot\xi)\cdot\theta\Rrightarrow (\xi\cdot\theta)$. Note that these choices immediately satisfy the pentagon equation.

Let us now provide the 2-isomorphism witnessing unitality. Given $\theta:F\Rightarrow G$ as above, we need to supply two left $\mathfrak{C}$-module invertible modifications $\lambda_{\theta}:Id_G^{\mathfrak{C}}\cdot \theta\Rrightarrow \theta$ and $\rho_{\theta}:\theta\cdot Id_F^{\mathfrak{C}}\Rrightarrow \theta$. Inspection shows that $$\Pi^{Id_G^{\mathfrak{C}}\cdot \theta} = \Pi^{\theta} = \Pi^{\theta\cdot Id_F^{\mathfrak{C}}},$$ meaning that we can set both $\lambda_{\theta}=Id_{\theta}$ and $\rho_{\theta}=Id_{\theta}$. These assignments satisfy the triangle identity, which concludes the proof of the proposition.
\end{proof}

\begin{Remark}
In fact, proposition \ref{prop:hom2cat} is a shadow of the fact that left $\mathfrak{C}$-module 2-categories, etc, form a 3-category.
\end{Remark}

The following lemma can be used to construct examples of module 2-categories.

\begin{Lemma}\label{lem:functorsgivemodules}
Let $\mathfrak{D}$ be a monoidal 2-category, $\mathfrak{M}$ be a left $\mathfrak{D}$-module 2-category, and $F:\mathfrak{C}\rightarrow \mathfrak{D}$ be a monoidal 2-functor. Then, $\mathfrak{M}$ admits a canonical structure of a left $\mathfrak{C}$-module 2-category.
\end{Lemma}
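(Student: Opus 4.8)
The plan is to pull back the module structure along $F$ by composing with the monoidal $2$-functor $H^{\mathfrak{M}}$, and then invoke the already-established equivalence of Definitions \ref{def:module2cat} and \ref{def:module2catunpacked}. Concretely, a left $\mathfrak{D}$-module structure on $\mathfrak{M}$ is, by Definition \ref{def:module2cat}, a monoidal $2$-functor $H^{\mathfrak{M}}:\mathfrak{D}\rightarrow End(\mathfrak{M})$; composing with $F$ yields a monoidal $2$-functor $H^{\mathfrak{M}}\circ F:\mathfrak{C}\rightarrow End(\mathfrak{M})$, since the composite of two monoidal $2$-functors is again a monoidal $2$-functor (with the evident composite structure $2$-natural equivalence and composite invertible modifications, all of which satisfy the monoidal coherence axioms). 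By Definition \ref{def:module2cat} this is exactly the data of a left $\mathfrak{C}$-module $2$-category structure on $\mathfrak{M}$, and by Lemma \ref{lem:module2cat} it corresponds to an unpacked structure as in Definition \ref{def:module2catunpacked}.

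First I would spell out, for the reader's convenience, what this structure looks like in unpacked form: the action $2$-functor is the composite $\mathfrak{C}\times\mathfrak{M}\xrightarrow{F\times Id_{\mathfrak{M}}}\mathfrak{D}\times\mathfrak{M}\xrightarrow{\Diamond}\mathfrak{M}$, which I will still denote $\Diamond$, so that $A\Diamond M := F(A)\Diamond M$ for $A$ in $\mathfrak{C}$ and $M$ in $\mathfrak{M}$. The associator is obtained by composing the monoidal structure $2$-isomorphism $\chi^F_{A,B}:F(A)\Box F(B)\cong F(A\Box B)$ of $F$ (diamonded with $M$) with the associator $\alpha^{\mathfrak{M}}$ of the $\mathfrak{D}$-action, giving $$\alpha_{A,B,M}: (A\Box B)\Diamond M = F(A\Box B)\Diamond M \xrightarrow{(\chi^F_{A,B})^{\bullet}\Diamond M} (F(A)\Box F(B))\Diamond M \xrightarrow{\alpha^{\mathfrak{M}}} F(A)\Diamond(F(B)\Diamond M) = A\Diamond(B\Diamond M),$$ and similarly the left unitor combines the unit structure $2$-isomorphism $F(I_{\mathfrak{C}})\cong I_{\mathfrak{D}}$ of $F$ with $l^{\mathfrak{M}}$. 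The three invertible modifications $\mu$, $\lambda$, $\pi$ are then built by pasting $\mu^{\mathfrak{M}}$, $\lambda^{\mathfrak{M}}$, $\pi^{\mathfrak{M}}$ with the coherence modifications $\omega^F$, $\gamma^F$, $\delta^F$ of the monoidal $2$-functor $F$.

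Since I am invoking Definition \ref{def:module2cat} and Lemma \ref{lem:module2cat}, I do not actually need to verify axioms a, b, c of Definition \ref{def:module2catunpacked} by hand: they follow automatically once one observes that $H^{\mathfrak{M}}\circ F$ is a genuine monoidal $2$-functor. I would therefore phrase the proof as: \emph{the composite $H^{\mathfrak{M}}\circ F:\mathfrak{C}\rightarrow End(\mathfrak{M})$ is a monoidal $2$-functor, hence by Definition \ref{def:module2cat} (and Lemma \ref{lem:module2cat}) it equips $\mathfrak{M}$ with a canonical left $\mathfrak{C}$-module $2$-category structure}, and then add a sentence unpacking the action as above for later use. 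If a linear variant is in force (Convention \ref{con:linearity}), one notes additionally that the composite of $R$-linear monoidal $2$-functors is $R$-linear, so the construction respects linearity.

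The only mild subtlety — and the point I expect to be the main obstacle, though it is routine — is checking that composing monoidal $2$-functors indeed produces a monoidal $2$-functor satisfying the coherence axioms of \cite{SP}; this is a standard fact about the (weak) $2$-category of monoidal $2$-categories and monoidal $2$-functors, and I would either cite it or remark that the verification is a direct diagram chase using the coherence axioms for $F$ and for $H^{\mathfrak{M}}$. Everything else is bookkeeping through the dictionary of Lemma \ref{lem:module2cat}.
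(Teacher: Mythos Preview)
Your proof is correct and follows exactly the same approach as the paper: the paper's proof consists of the single sentence ``The proof follows immediately from definition \ref{def:module2cat},'' which is precisely your observation that the composite $H^{\mathfrak{M}}\circ F:\mathfrak{C}\rightarrow End(\mathfrak{M})$ is a monoidal $2$-functor. Your additional unpacking of the resulting action and coherence data is helpful but not required for the argument.
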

\begin{proof}
The proof follows immediately from definition \ref{def:module2cat}.
\end{proof}

\begin{Example}
Let $\mathfrak{C}$ be a fusion 2-category. The inclusion $\mathfrak{C}^0\hookrightarrow \mathfrak{C}$ of the connected component of the identity is monoidal by proposition 2.4.5 of \cite{D2}, whence we can use lemma \ref{lem:functorsgivemodules} to see that $\mathfrak{C}$ is a finite semisimple left $\mathfrak{C}^0$-module 2-category.
\end{Example}

\begin{Example}
Given a braided monoidal functor $\mathcal{B}_1\rightarrow \mathcal{B}_2$ between two braided fusion categories, we get a monoidal 2-functor $F:\mathbf{Mod}(\mathcal{B}_1)\rightarrow \mathbf{Mod}(\mathcal{B}_2)$ between fusion 2-categories using proposition 2.4.7 of \cite{D2}. Thus, we can use lemma \ref{lem:functorsgivemodules} to find that $\mathbf{Mod}(\mathcal{B}_2)$ admits a left $\mathbf{Mod}(\mathcal{B}_1)$-module structure, and so is a finite semisimple left $\mathbf{Mod}(\mathcal{B}_1)$-module 2-category.
\end{Example}

In fact, one can generalize the above example to characterize finite semisimple left $\mathbf{Mod}(\mathcal{B})$-module 2-categories.

\begin{Lemma}\label{lem:Mod(B)modules}
Equivalence classes of finite semisimple left $\mathbf{Mod}(\mathcal{B})$-module 2-categories correspond precisely to equivalence classes of multifusion categories $\mathcal{C}$ equipped with a central monoidal functor $\mathcal{B}\rightarrow \mathcal{C}$.
\end{Lemma}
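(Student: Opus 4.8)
The plan is to establish a bijection (of equivalence classes) in both directions, using the earlier lemmas to pass freely between module structures and monoidal $2$-functors. First I would construct the forward map. Given a multifusion category $\mathcal{C}$ equipped with a central monoidal functor $\mathcal{B}\to\mathcal{C}$, that is, a braided lift $\mathcal{B}\to Z(\mathcal{C})$, I would invoke proposition 2.4.7 of \cite{D2} (as in the example above) to produce a monoidal $2$-functor $Mod(\mathcal{B})\to Mod(Z(\mathcal{C}))$, and then compose with the canonical monoidal $2$-functor $Mod(Z(\mathcal{C}))\to Mod(\mathcal{C})$ induced by the forgetful functor $Z(\mathcal{C})\to\mathcal{C}$ — or more directly observe that a central structure is exactly the data needed to make $Mod(\mathcal{C})$ (equivalently, the $2$-category of $\mathcal{C}$-bimodule categories, or $\mathbf{Mod}(\mathcal{C})$ in the sense of \cite{DR}) into a left $Mod(\mathcal{B})$-module $2$-category via $\mathcal{M}\mapsto \mathcal{M}\boxtimes_{\mathcal{B}}(-)$. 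By lemma \ref{lem:functorsgivemodules}, this monoidal $2$-functor equips $Mod(\mathcal{C})$ with a finite semisimple left $Mod(\mathcal{B})$-module $2$-category structure, and finite semisimplicity of $Mod(\mathcal{C})$ is standard (it is a fusion $2$-category by \cite{DR}).

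Next I would construct the reverse map. Given a finite semisimple left $Mod(\mathcal{B})$-module $2$-category $\mathfrak{M}$, apply Theorem \ref{thm:2Ostrik} to $\mathfrak{C}=Mod(\mathcal{B})$: there is an algebra $A$ in $Mod(\mathcal{B})$ with $\mathfrak{M}\simeq\mathbf{Mod}_{Mod(\mathcal{B})}(A)$ as left $Mod(\mathcal{B})$-module $2$-categories. One then identifies $A$: an algebra in $Mod(\mathcal{B})$ is, by the structure of $Mod(\mathcal{B})$ as a fusion $2$-category whose objects are finite semisimple $\mathcal{B}$-module categories with relative Deligne product, precisely a finite semisimple $\mathcal{B}$-module category $\mathcal{A}$ together with a compatible monoidal structure — equivalently (using Ostrik's theorem at the $1$-categorical level) a multifusion category $\mathcal{C}$ together with a monoidal functor $\mathcal{B}\to\mathcal{C}$ whose image is central, the centrality being exactly the braiding-compatibility of the algebra structure with the $Mod(\mathcal{B})$-module structure. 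Then $\mathbf{Mod}_{Mod(\mathcal{B})}(A)\simeq Mod(\mathcal{C})$ with its induced left $Mod(\mathcal{B})$-module structure, recovering the output of the forward map.

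Finally I would check that the two assignments are mutually inverse on equivalence classes. In one direction this is the identification of $A$ with the algebra $\underline{End}(M)$ for a suitable $Mod(\mathcal{B})$-generator $M$ of $Mod(\mathcal{C})$, namely $M=\mathcal{C}$ viewed as a module category over itself; in the other direction it amounts to showing that two central monoidal functors $\mathcal{B}\to\mathcal{C}$, $\mathcal{B}\to\mathcal{C}'$ yielding equivalent left $Mod(\mathcal{B})$-module $2$-categories $Mod(\mathcal{C})\simeq Mod(\mathcal{C}')$ are related by a monoidal equivalence $\mathcal{C}\simeq\mathcal{C}'$ compatible with the central structures — this follows from the fact that a left $Mod(\mathcal{B})$-module equivalence must send the distinguished generator to a generator, together with functoriality of $\underline{End}$.

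I expect the main obstacle to be the precise identification of algebras in $Mod(\mathcal{B})$ with multifusion categories carrying a central functor from $\mathcal{B}$: unwinding the algebra axioms in the fusion $2$-category $Mod(\mathcal{B})$ — whose monoidal product is the relative Deligne product $\boxtimes_{\mathcal{B}}$ — into the data of a monoidal structure on a $\mathcal{B}$-module category, and recognizing the hexagon-type coherence as exactly a half-braiding, requires care; this is essentially a categorified version of the dictionary "algebra in $Mod(\mathcal{B})$ $=$ $\mathcal{B}$-central algebra" and should be handled by citing or adapting the relevant results of \cite{DR} and \cite{D2} on the structure of $Mod(\mathcal{B})$.
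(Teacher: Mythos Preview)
Your approach is genuinely different from the paper's, and it carries both a structural problem and a mathematical gap.

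\textbf{Structural issue.} The lemma sits in section~\ref{sec:module2cat}, well before Theorem~\ref{thm:2Ostrik} is available. The paper's proof is accordingly elementary: it exploits that $Mod(\mathcal{B})$ is connected, hence is the Cauchy completion of the one-object 2-category $BEnd_{Mod(\mathcal{B})}(\mathcal{B})\simeq B\mathcal{B}$, and likewise any finite semisimple $\mathfrak{M}$ is the Cauchy completion of $BEnd_{\mathfrak{M}}(M)$ for a suitable $M$. By the universal property of Cauchy completion, a left $Mod(\mathcal{B})$-module structure on $\mathfrak{M}$ is determined by an action of $B\mathcal{B}$ on $BEnd_{\mathfrak{M}}(M)$, and unpacking this is exactly a central monoidal functor $\mathcal{B}\to End_{\mathfrak{M}}(M)$. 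The target is automatically multifusion because $\mathfrak{M}$ is finite semisimple. Your route through Theorem~\ref{thm:2Ostrik} is not circular (that theorem does not invoke this lemma), but it inverts the logical flow of the paper and obscures that the result is really a statement about Cauchy completions, not about module theory over algebras.

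\textbf{Mathematical gap.} You assert that an algebra in $Mod(\mathcal{B})$ is ``equivalently \dots\ a multifusion category $\mathcal{C}$ together with a monoidal functor $\mathcal{B}\to\mathcal{C}$ whose image is central.'' This is false as stated: by Example~\ref{ex:algebrasbraidedfusion} (and proposition~3.2 of \cite{BJS}), algebras in $Mod(\mathcal{B})$ are only \emph{finite semisimple monoidal} categories with a central functor from $\mathcal{B}$; rigidity is extra. To conclude that the algebra $A=\underline{End}(M)$ produced by Theorem~\ref{thm:2Ostrik} is multifusion you would also need Theorem~\ref{thm:rigid} together with the identification of rigid algebras in $Mod(\mathcal{B})$ with multifusion categories (the example following Definition~\ref{def:rigidseparable}). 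You invoke neither. By contrast, the paper's argument gets multifusion for free, since $End_{\mathfrak{M}}(M)$ is multifusion directly from the finite semisimplicity of $\mathfrak{M}$.

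In short: your outline can be made to work, but only by pulling in both of the paper's main theorems; the paper's own proof is a two-line Cauchy-completion argument that needs neither.
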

\begin{proof}
Let $\mathfrak{M}$ be a finite semisimple left $\mathbf{Mod}(\mathcal{B})$-module 2-category. Fix $M$ an object of $\mathfrak{M}$ such that $$\mathrm{B}End_{\mathfrak{M}}(M)\hookrightarrow \mathfrak{M}$$ is a Cauchy completion (see \cite{D1} for the definition). Further, recall that $\mathcal{B}$ (viewed as a $\mathcal{B}$-module) is such that $\mathrm{B}End_{\mathbf{Mod}(\mathcal{B})}(\mathcal{B})\hookrightarrow \mathbf{Mod}(\mathcal{B})$ is a Cauchy completion. Using the universal property of the Cauchy completion (see \cite{D1}), the left action by $\mathbf{Mod}(\mathcal{B})$ on $\mathfrak{M}$ is specified by the left action of $\mathrm{B}End_{Mod(\mathcal{B})}(\mathcal{B})\simeq \mathbf{Mod}(\mathcal{B})$ on $\mathrm{B}End_{\mathfrak{M}}(M)$. Inspection shows that this is precisely the data of a central monoidal functor $\mathcal{B}\rightarrow End_{\mathfrak{M}}(M)$. The converse follows using the above argument in reverse order.
\end{proof}

We now state a coherence result for left module 2-categories. Similar, results hold for right module 2-categories as well as for bimodule 2-categories.

\begin{Definition}
A pair $(\mathfrak{C},\mathfrak{M})$ consisting of a monoidal 2-category $\mathfrak{C}$ and a left $\mathfrak{C}$-module 2-category $\mathfrak{M}$ is called strict cubical provided that:
\begin{enumerate}
    \item The monoidal 2-category $\mathfrak{C}$ is strict cubical,
    \item The underlying 2-category of $\mathfrak{M}$ is strict,
    \item The 2-functor $\Diamond^{\mathfrak{M}}$ is strict cubical,
    \item The adjoint 2-natural equivalences $\alpha^{\mathfrak{M}}$ and $l^{\mathfrak{M}}$ are given by identity adjoint 2-natural transformations, and the invertible modifications $\mu^{\mathfrak{M}}$, $\lambda^{\mathfrak{M}}$ and $\pi^{\mathfrak{M}}$ are given by identity 2-morphisms.
\end{enumerate}
\end{Definition}

\begin{Proposition}{(Coherence for module 2-categories)}\label{prop:coherence}
Given any pair $(\mathfrak{C},\mathfrak{M})$ consisting of a monoidal 2-category $\mathfrak{C}$, and a left $\mathfrak{C}$-module 2-category $\mathfrak{M}$, there exists
\begin{enumerate}
    \item A monoidal 2-category $\mathfrak{D}$ together with a left $\mathfrak{D}$-module 2-category $\mathfrak{N}$ such that the pair $(\mathfrak{D},\mathfrak{N})$ is strict cubical,
    \item An equivalence of monoidal 2-categories $F:\mathfrak{C}\rightarrow \mathfrak{D}$,
    \item An equivalence of left $\mathfrak{C}$-module 2-categories $H:\mathfrak{M}\rightarrow \mathfrak{N}$ (the left $\mathfrak{C}$-module structure on $\mathfrak{N}$ is that supplied by lemma \ref{lem:functorsgivemodules}).
\end{enumerate}
\end{Proposition}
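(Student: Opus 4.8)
The plan is to mimic the strictification strategy used for monoidal 2-categories in \cite{Gur}, carrying the module structure along for the ride. The key observation is that a left $\mathfrak{C}$-module 2-category is, by Definition \ref{def:module2cat}, the same thing as a monoidal 2-functor $H^{\mathfrak{M}}:\mathfrak{C}\to End(\mathfrak{M})$, so the problem of strictifying the pair $(\mathfrak{C},\mathfrak{M})$ can be reduced to strictifying monoidal 2-categories and monoidal 2-functors between them, for which coherence theorems already exist.

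First, I would apply the coherence theorem for monoidal 2-categories (\cite{Gur}) to $\mathfrak{C}$ to produce a strict cubical monoidal 2-category $\mathfrak{D}$ together with an equivalence of monoidal 2-categories $F:\mathfrak{C}\to \mathfrak{D}$. Next, I would choose an equivalence of (plain) 2-categories $H:\mathfrak{M}\to \mathfrak{N}$ with $\mathfrak{N}$ a strict 2-category, using the coherence theorem for bicategories; $H$ induces an equivalence of monoidal 2-categories $H^*(-)H_*:End(\mathfrak{M})\simeq End(\mathfrak{N})$, and since $\mathfrak{N}$ is strict one can further replace $End(\mathfrak{N})$ by an equivalent strict cubical monoidal 2-category if needed (this is where one invokes that endomorphism 2-categories of strict 2-categories are already strict, so only the interchangers need taming). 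Transporting $H^{\mathfrak{M}}$ along these equivalences and composing with a pseudo-inverse of $F$ yields a monoidal 2-functor $\mathfrak{D}\to End(\mathfrak{N})$, i.e.\ a left $\mathfrak{D}$-module structure on $\mathfrak{N}$; by Lemma \ref{lem:functorsgivemodules} (restriction along $F$) this pulls back to a left $\mathfrak{C}$-module structure on $\mathfrak{N}$, and by construction $H$ is an equivalence of left $\mathfrak{C}$-module 2-categories. The only remaining point is to arrange that the monoidal 2-functor $\mathfrak{D}\to End(\mathfrak{N})$ is itself \emph{strict}, so that the resulting $\Diamond^{\mathfrak{N}}$, $\alpha^{\mathfrak{N}}$, $l^{\mathfrak{N}}$, $\mu^{\mathfrak{N}}$, $\lambda^{\mathfrak{N}}$, $\pi^{\mathfrak{N}}$ are all identities as required by the definition of strict cubical pair; this is achieved by the strictification of monoidal 2-functors (every monoidal 2-functor between strict cubical monoidal 2-categories is monoidally equivalent to a strict one, possibly after enlarging $\mathfrak{N}$ by a further equivalence).

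The main obstacle is the last step: unlike the purely structural transport, forcing the monoidal 2-functor $H^{\mathfrak{M}}$ (equivalently, the data $\Diamond^{\mathfrak{M}}$ and its associated equivalences and modifications) to become literally strict requires a genuine strictification argument rather than a mere change of model. Concretely, one must show that the 2-functor $\Diamond$, the equivalences $\alpha^{\mathfrak{M}}$ and $l^{\mathfrak{M}}$, and the modifications $\mu^{\mathfrak{M}},\lambda^{\mathfrak{M}},\pi^{\mathfrak{M}}$ can all be rectified to identities while keeping $\mathfrak{C}$ strict cubical and $\mathfrak{N}$ strict. This is exactly parallel to Gurski's proof that every monoidal bicategory is equivalent to a strict cubical one, applied to the "one-object" monoidal tricategory-style package $(\mathfrak{C},\mathfrak{M})$; the cleanest route is to observe that the pair $(\mathfrak{C},\mathfrak{M})$ is encoded by a single monoidal 2-functor into a monoidal 2-category and invoke coherence for such data, rather than redoing the string-diagram bookkeeping by hand. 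I would therefore structure the proof so that all the coherence axioms a, b, c of Definitions \ref{def:module2catunpacked} are never manipulated directly but are subsumed into the monoidal coherence axioms for $H^{\mathfrak{M}}$, exactly as in Lemma \ref{lem:module2cat}.
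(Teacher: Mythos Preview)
Your strategy is in the right direction, but the paper's proof takes a much shorter route that sidesteps precisely the obstacle you identify. Rather than strictifying $\mathfrak{C}$, $\mathfrak{M}$, and then the monoidal 2-functor $H^{\mathfrak{M}}:\mathfrak{C}\to End(\mathfrak{M})$ in separate stages, the paper packages the pair $(\mathfrak{C},\mathfrak{M})$ as a single tricategory $\mathbb{A}$ with two objects $A$ and $B$, setting $Hom_{\mathbb{A}}(A,A)=\mathfrak{C}$, $Hom_{\mathbb{A}}(B,A)=\mathfrak{M}$, $Hom_{\mathbb{A}}(B,B)=*$, and $Hom_{\mathbb{A}}(A,B)=\emptyset$. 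The monoidal structure on $\mathfrak{C}$ becomes composition at $A$, and the left $\mathfrak{C}$-action on $\mathfrak{M}$ becomes composition $Hom(A,A)\times Hom(B,A)\to Hom(B,A)$. One then applies Gurski's coherence theorem for tricategories in a single step to obtain an equivalent Gray-category, and reading off the Hom-2-categories of the result gives the strict cubical pair $(\mathfrak{D},\mathfrak{N})$ together with the required equivalences.

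The difficulty you flag at the end is real: your approach leaves you with a monoidal 2-functor $\mathfrak{D}\to End(\mathfrak{N})$ between strict cubical monoidal 2-categories that must itself be made strict, and you correctly note this is ``a genuine strictification argument rather than a mere change of model.'' Your claim that ``every monoidal 2-functor between strict cubical monoidal 2-categories is monoidally equivalent to a strict one, possibly after enlarging $\mathfrak{N}$'' is essentially a special case of coherence for trihomomorphisms, and proving it would amount to redoing a substantial part of tricategory coherence. You even gesture toward the right fix when you mention the ``monoidal tricategory-style package $(\mathfrak{C},\mathfrak{M})$,'' but you stop short of actually invoking tricategory coherence. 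The paper's two-object tricategory construction is exactly the clean packaging that makes this one-line: it strictifies $\mathfrak{C}$, $\mathfrak{M}$, and the action simultaneously, so no separate strictification of the functor is needed.
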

\begin{proof}
We can view the pair $(\mathfrak{C},\mathfrak{M})$ as a 3-category $\mathbb{A}$ with two objects $A$, and $B$ by making the following assignments

$$\begin{tabular}{rclrcl}
$Hom_{\mathbb{A}}(A,A)$ &$=$& $\mathfrak{C},$& $Hom_{\mathbb{A}}(B,A)$ &$=$ &$\mathfrak{M},$\\
$Hom_{\mathbb{A}}(B,B)$ &$=$& $*,$&$Hom_{\mathbb{A}}(A,B)$ &$=$&$\emptyset.$
\end{tabular}$$

Now, thanks to the coherence theorem for 3-categories (see \cite{Gur}), there exists an equivalence of 3-categories $\mathbb{A}\rightarrow \mathbb{B},$ where $\mathbb{B}$ is a Gray-category. Unpacking what this means, we find exactly the statement of the proposition.
\end{proof}

\begin{Remark}\label{rem:coherence}
In fact, the proof of proposition \ref{prop:coherence} can be generalized to apply simultaneously to more than one left $\mathfrak{C}$-module 2-category. Namely, if $\mathfrak{M}$ and $\mathfrak{N}$ are two left $\mathfrak{C}$-module 2-categories, we can assemble them into a 3-category with three object, and use the coherence theorem for 3-categories.
\end{Remark}

As an application of proposition \ref{prop:coherence}, we prove the following generalization of lemma 1.2.8 of \cite{D2}, which will be used later on.

\begin{Lemma}\label{lem:adjunctionduals}
Let $\mathfrak{C}$ be a monoidal 2-category, $\mathfrak{M}$ a left $\mathfrak{C}$-module 2-category, $C$ an object of $\mathfrak{C}$, and $M$, $N$ objects of $\mathfrak{M}$. If $C$ has a left dual, then there is an adjoint 2-natural equivalence

$$b_{C,M,N}:Hom_{\mathfrak{M}}(^{\sharp}C\Diamond M, N)\simeq Hom_{\mathfrak{M}}(M,C\Diamond N).$$

\end{Lemma}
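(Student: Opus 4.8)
The plan is to construct the adjoint 2-natural equivalence $b_{C,M,N}$ by exhibiting, for each pair $(M,N)$, an explicit adjoint equivalence of categories and then checking 2-naturality. By Proposition \ref{prop:coherence} (coherence for module 2-categories) we may and do assume that the pair $(\mathfrak{C},\mathfrak{M})$ is strict cubical; this removes all the structure constraints $\alpha^{\mathfrak{M}}$, $l^{\mathfrak{M}}$, $\pi^{\mathfrak{M}}$, $\lambda^{\mathfrak{M}}$, $\mu^{\mathfrak{M}}$ from the bookkeeping, so that the action of $\mathfrak{C}$ on $\mathfrak{M}$ behaves strictly, and the only non-identity structure 2-isomorphisms we ever see are the interchangers. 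Write ${}^{\sharp}C$ for a chosen left dual of $C$, with evaluation $ev: {}^{\sharp}C\Box C\to I$ and coevaluation $coev: I\to C\Box {}^{\sharp}C$ satisfying the two snake (zigzag) equations up to the usual invertible 2-morphisms. The key point is that $-\Diamond M: \mathfrak{C}\to\mathfrak{M}$ and $C\Diamond -, {}^{\sharp}C\Diamond -:\mathfrak{M}\to\mathfrak{M}$ are 2-functors, so applying them to $ev$, $coev$ and the snake 2-isomorphisms transports the duality data into $\mathfrak{M}$.

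First I would define the two 2-functors. On objects and morphisms, given $f: {}^{\sharp}C\Diamond M\to N$ in $\mathfrak{M}$, send it to the composite
$$ M \;\xrightarrow{\,coev\,\Diamond\, 1_M\,}\; C\Diamond {}^{\sharp}C\Diamond M \;\xrightarrow{\,1_C\,\Diamond\, f\,}\; C\Diamond N, $$
and conversely, given $g: M\to C\Diamond N$, send it to
$$ {}^{\sharp}C\Diamond M \;\xrightarrow{\,1\,\Diamond\, g\,}\; {}^{\sharp}C\Diamond C\Diamond N \;\xrightarrow{\,ev\,\Diamond\, 1_N\,}\; N. $$
On 2-morphisms these act by whiskering. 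That these assignments extend to 2-functors $Hom_{\mathfrak{M}}({}^{\sharp}C\Diamond M,N)\to Hom_{\mathfrak{M}}(M,C\Diamond N)$ and back is immediate since whiskering by a fixed 1-morphism is functorial (the required coherence 2-isomorphisms being built from interchangers). Next I would produce the unit and counit of the adjoint equivalence: composing the two assignments produces, for $f$, a 1-morphism that differs from $f$ by a snake-identity application of $ev$ and $coev$ — the invertible 2-morphism witnessing this is precisely (the image under $-\Diamond M$ or $N\Diamond-$ of) one of the zigzag 2-isomorphisms for the duality ${}^{\sharp}C\dashv C$, suitably whiskered. The snake equations for $({}^{\sharp}C, C, ev, coev)$ then guarantee the triangle identities for the resulting adjoint equivalence, so $b_{C,M,N}$ is an equivalence of categories for each $M,N$.

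Finally I would check 2-naturality in $M$ and $N$: given 1-morphisms $m: M'\to M$ in $\mathfrak{M}$ and $n: N\to N'$, the two ways of going around the naturality square agree up to a canonical invertible 2-morphism assembled from interchangers, because $coev\Diamond 1$, $1\Diamond f$, etc., are all whiskerings and whiskering commutes with composition up to interchange; the modification data making $b$ into an adjoint 2-natural equivalence is obtained the same way. The main obstacle is purely organizational rather than conceptual: keeping the whiskered interchangers and the images under $-\Diamond M$ and $C\Diamond-$ of the zigzag 2-isomorphisms consistent through the triangle-identity verification. This is exactly the kind of string-diagram manipulation the paper defers to its appendix, and the strict-cubical reduction from Proposition \ref{prop:coherence} is what makes it tractable; modulo that bookkeeping the statement is the 2-categorical shadow of the familiar Hom-tensor adjunction for a dualizable object, generalizing lemma 1.2.8 of \cite{D2} from $\mathfrak{M}=\mathfrak{C}$ to an arbitrary module 2-category.
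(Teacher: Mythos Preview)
Your proposal is correct and follows the same approach as the paper: reduce to the strict cubical case, define the two functors by whiskering with the coevaluation and evaluation, and take the snake 2-isomorphisms as unit and counit. The only refinement the paper makes explicit is that one fixes a \emph{coherent} dual pair $({}^{\sharp}C,C,i_C,e_C,C_C,D_C)$ in the sense of \cite{Pstr}, so that the triangle identities for the resulting adjoint equivalence follow from the swallowtail equations satisfied by the triangulators $C_C$ and $D_C$; mere invertibility of the snake 2-morphisms does not by itself force the triangle identities, so you should invoke coherence of duals at that step.
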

\begin{proof}
By proposition \ref{prop:coherence}, we may assume that the pair $(\mathfrak{C},\mathfrak{M})$ is strict cubical. Let $(^{\sharp}C,C,i_C,e_C,C_C,D_C)$ be a coherent dual pair (see \cite{Pstr} or \cite{D2}). Using definition \ref{def:module2catunpacked}, we can write down linear functors

\begin{center}
\begin{tabular}{ r c l }
$Hom_{\mathfrak{M}}(^{\sharp}C\Diamond M, N)$ & $\rightleftarrows$ & $Hom_{\mathfrak{M}}(M, C\Diamond N)$ \\ 
$f$ & $\mapsto$ & $(C\Diamond f)\circ(i_C\Diamond M)$ \\  
$(e_C\Diamond N)\circ (^{\sharp}C\Diamond g)$ & $\mapsfrom$ & $g$   
\end{tabular}
\end{center}

that form adjoint equivalences. Namely, the unit $\eta^b_{C,M,N}$ and the counit $\epsilon^b_{C,M,N}$ of these adjunctions are given by

\settoheight{\module}{\includegraphics[width=22.5mm]{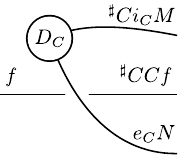}}

\begin{center}
\begin{tabular}{@{} c c c c c @{}}
\raisebox{0.45\module}{$\eta^b_{C,M,N}:=$}&\includegraphics[width=22.5mm]{Module2CategoriesPictures/prebalanced/unitb.pdf},&\phantom{AA}&\raisebox{0.45\module}{$\epsilon^b_{C,M,N}:=$}&\includegraphics[width=22.5mm]{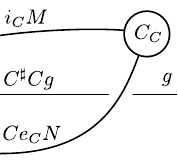}.
\end{tabular}
\end{center}

The triangle identities follow from the swallowtail equations satisfied by $C_C$ and $D_C$. Finally, these adjoint equivalences are manifestly 2-natural in $M$ and $N$, which proves the result.
\end{proof}

Finally, for application to our main theorem, we prove the following technical lemma, which is a generalization of the categorical Schur lemma of \cite{DR}.

\begin{Lemma}\label{lem:Schurmodule}
Let $\mathfrak{C}$ be a multifusion 2-category, and $\mathfrak{M}$ a finite semisimple left $\mathfrak{C}$-module 2-category. Let $A,B$ be objects of $\mathfrak{C}$ such that $B$ is simple, and $M,N$ be objects of $\mathfrak{M}$. Then, for any non-zero 1-morphism $f:B\Diamond M\rightarrow N$ in $\mathfrak{M}$, and non-zero 1-morphism $g:A\rightarrow B$ in $\mathfrak{C}$, the composite $f\circ (g\Diamond M)$ is non-zero.
\end{Lemma}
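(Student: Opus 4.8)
The plan is to reduce the statement to the categorical Schur lemma of \cite{DR} by passing to endomorphism categories and exploiting semisimplicity. First I would reduce to the case where $N$ is simple: since $\mathfrak{M}$ is finite semisimple, $N$ decomposes as a finite direct sum of simples, and $f$ is non-zero iff at least one of its components $f_i: B\Diamond M \to N_i$ is non-zero; moreover $f\circ(g\Diamond M)$ is non-zero iff some $f_i\circ(g\Diamond M)$ is non-zero. So I may assume $N$ simple. Similarly, by decomposing $A$ into simple summands, I may assume $A$ is simple, with $g: A\to B$ a non-zero morphism between simples in $\mathfrak{C}$.

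Next, the key observation is that because $B$ is simple in the multifusion 2-category $\mathfrak{C}$, its endomorphism category $End_{\mathfrak{C}}(B)$ is a (connected) fusion 1-category, and likewise $End_{\mathfrak{M}}(B\Diamond M)$ and the relevant Hom-categories are finite semisimple 1-categories on which these fusion categories act. The strategy is to set up the right 1-categorical module-category picture: the functor $(-)\Diamond M: End_{\mathfrak{C}}(B) \to End_{\mathfrak{M}}(B\Diamond M)$, together with whiskering by $f$ and by $g$, should let me compare the non-vanishing of $f\circ(g\Diamond M)$ with the non-vanishing of a 2-morphism obtained purely inside $\mathfrak{C}$. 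Concretely, I would consider the unit of some adjunction: since $B$ is simple in a multifusion 2-category, $1_B$ is a simple object of $End_{\mathfrak{C}}(B)$, so $g$ being non-zero gives (after passing to adjoints, using that $\mathfrak{C}$ is rigid) a non-zero 2-morphism exhibiting $1_B$ as a summand of $g^* \circ g$ or $g\circ g^*$ appropriately, and then one deduces that $g\Diamond M$ admits $1_{B\Diamond M}$ as a retract-relevant piece, so that whiskering $f$ by $g\Diamond M$ cannot kill it.

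Alternatively — and this is probably the cleaner route — I would invoke the categorical Schur lemma of \cite{DR} directly after building an auxiliary object. The categorical Schur lemma says that between simple objects of a finite semisimple 2-category, composition of non-zero 1-morphisms through a simple object is controlled, and in particular non-zero 1-morphisms compose non-trivially in the appropriate sense. The task is to produce, from $f$ and $g$, a configuration of 1-morphisms between \emph{simple} objects in the finite semisimple 2-category $\mathfrak{M}$ to which that lemma applies: $A\Diamond M$ need not be simple, but $B\Diamond M$ receives the non-zero 1-morphism $f$ to the simple object $N$, and $g\Diamond M: A\Diamond M \to B\Diamond M$ is non-zero by $R$-bilinearity of $\Diamond$ (here I use that $\Diamond$ is $\mathds{k}$-bilinear and $g \neq 0$, $M \neq 0$). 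I expect the main obstacle to be exactly this point: showing $g\Diamond M \neq 0 \implies f\circ(g\Diamond M) \neq 0$ is not formal, since $A\Diamond M$ is generally not simple, so one must decompose $A\Diamond M$ into simples and argue that \emph{some} component of $g\Diamond M$ composed with $f$ survives. To handle this I would use that $B$ is simple: the 2-morphism space structure forces $g\Diamond M$, restricted to each simple summand of $A\Diamond M$, to land non-trivially in a way detected by $End_{\mathfrak{M}}(B\Diamond M)$, and then apply the Schur-type argument summand by summand, using rigidity of $\mathfrak{C}$ to transport the non-vanishing across the adjunction $b_{B,M,N}$ of Lemma \ref{lem:adjunctionduals} if a dual of $B$ is needed. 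The bookkeeping with string diagrams, deferred to the appendix as the outline promises, is where the real work lies.
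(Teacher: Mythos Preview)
Your middle paragraph already contains the complete argument, and it is precisely what the paper does. The paper's proof is one short paragraph: take a right adjoint $g^*$ of $g$ (which exists because $\mathfrak{C}$ is finite semisimple); since $g\neq 0$ the counit $\epsilon^g:g\circ g^*\Rightarrow Id_B$ is non-zero, and since $B$ is simple the object $Id_B\in End_{\mathfrak{C}}(B)$ is simple, so $\epsilon^g$ admits a section. Now if $f\circ(g\Diamond M)=0$ then also $f\circ(g\Diamond M)\circ(g^*\Diamond M)=0$, and whiskering the section of $\epsilon^g$ by $f$ exhibits $Id_f$ as factoring through this zero 1-morphism, forcing $f=0$.

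Everything else in your proposal is superfluous or misleading. The reductions to $N$ simple and $A$ simple are not needed: the adjoint argument works for arbitrary $A$, $N$, and $M$, and never decomposes anything in $\mathfrak{M}$. Your ``alternative route'' via the categorical Schur lemma applied inside $\mathfrak{M}$ runs into exactly the obstacle you identify (neither $A\Diamond M$ nor $B\Diamond M$ is simple in general), and there is no clean way around it; the point is to stay in $\mathfrak{C}$, where simplicity of $B$ is available, and only transport the conclusion to $\mathfrak{M}$ via $\Diamond$. Finally, there is no string-diagram bookkeeping and nothing deferred to the appendix: this lemma is genuinely a five-line argument, and you had it in hand before you talked yourself out of it.
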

\begin{proof}
Let us assume that the pair $(\mathfrak{C},\mathfrak{M})$ is strict cubical. As $\mathfrak{C}$ is a finite semisimple 2-category, $g$ has a right adjoint, which we denote by $g^*$, and we write $\epsilon^g$ for the counit of this adjunction. Now, as $g$ is non-zero, $\epsilon^g:g\circ g^*\Rightarrow Id_B$ is non-zero. Further, as $B$ is simple, $Id_B$ is simple, so $\epsilon^g$ is a projection onto a summand. If the composite $f\circ (g\Diamond M)$ were zero, so would the composite $f\circ (g\Diamond M)\circ (g^*\Diamond M)$, and consequently the 2-morphism $f\circ (\epsilon^g\Diamond M)$. As $\epsilon^g$ has a section, we would get that $f$ is zero, which contradicts our hypotheses.
\end{proof}

\section{Algebras and Modules in Monoidal 2-Categories}\label{sec:algebramodule}

\subsection{Algebras}

Throughout, we work in a fixed monoidal 2-category $\mathfrak{C}$.

\begin{Definition}\label{def:algebra}
An algebra in $\mathfrak{C}$ consists of:
\begin{enumerate}
    \item An object $A$ of $\mathfrak{C}$;
    \item Two 1-morphisms $m:A\Box A\rightarrow A$ and $i:I\rightarrow A$;
    \item Three 2-isomorphisms
\end{enumerate}
\begin{center}
$\begin{tikzcd}[sep=tiny]
                           & A(AA) \arrow[ld, "1m"'] &                                 & (AA)A \arrow[ll, "\alpha"'] \arrow[rd, "m1"] &                           \\
AA \arrow[rrdd, "m"']  & {}\arrow[rr, "\mu^{A}", Rightarrow, shorten >=5ex, shorten <=5ex ]                                                      &  &  {}                                      & AA, \arrow[lldd, " m"] \\
                           &                                                        &                               &                                       &                           \\
                           &                                                        & A                              &                                       &                          
\end{tikzcd}$

\begin{tabular}{@{}c c@{}}
$$\begin{tikzcd}[sep=small]
AI  \arrow[rr, "1i"] &{}\arrow[dd, Rightarrow,"\rho^{A}", near start, shorten > = 2ex, shorten < = 1ex]  & AA \arrow[dd, "m"]\\
&  & \\
A \arrow[uu, "r"] & {} & A, \arrow[equal, ll]
\end{tikzcd}$$

&

$$\begin{tikzcd}[sep=tiny]
A \arrow[ddd, equal]\arrow[Rightarrow, rrrddd, "\lambda^{A}", shorten > = 1.6ex, shorten < = 1.6ex]  &                                        &    & IA \arrow[ddd, "i1"] \arrow[lll, "l"'] \\
 &  &    & \\
  &  &  &  \\
A                                            &                                        &    &  AA; \arrow[lll, "m"]
\end{tikzcd}$$
\end{tabular}
\end{center}

satisfying:

\begin{enumerate}
\item [a.] We have:
\end{enumerate}

\newlength{\algebra}
\settoheight{\algebra}{\includegraphics[width=52.5mm]{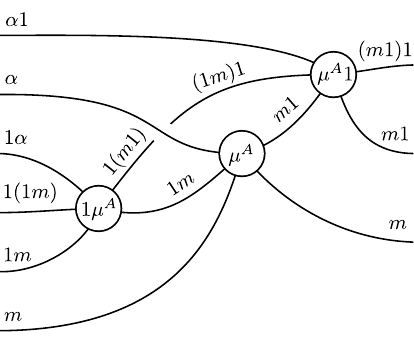}}

\begin{center}
\begin{tabular}{@{}ccc@{}}

\includegraphics[width=52.5mm]{Module2CategoriesPictures/algebra/associativity1.pdf} & \raisebox{0.45\algebra}{$=$} &
\includegraphics[width=56.25mm]{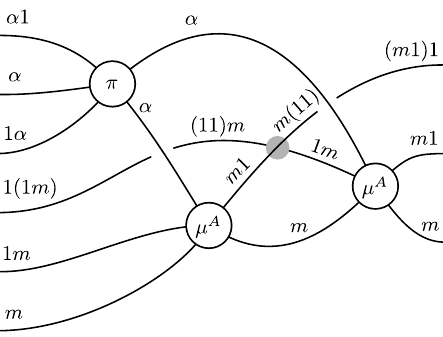};

\end{tabular}
\end{center}

\begin{enumerate}
\item [b.] We have:
\end{enumerate}

\settoheight{\algebra}{\includegraphics[width=45mm]{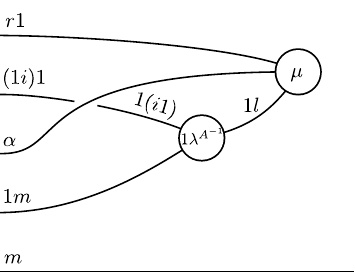}}

\begin{center}
\begin{tabular}{@{}ccc@{}}

\includegraphics[width=45mm]{Module2CategoriesPictures/algebra/unitality1.pdf} & \raisebox{0.45\algebra}{$=$} &

\includegraphics[width=37.5mm]{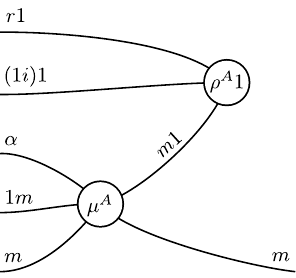}.

\end{tabular}
\end{center}
\end{Definition}

\begin{Remark}
The objects we have called algebras are precisely what \cite{DS} refers to as pseudo-monoids.
\end{Remark}

\begin{Example}\label{ex:algebra2vect}
It is well-known (see \cite{DS}) that algebras in the 2-category of (small) categories with monoidal structure given by the Cartesian product correspond precisely to (small) monoidal categories.
\end{Example}

\begin{Example}\label{ex:algebra2vectG}
Algebras in $\mathbf{2Vect}$ correspond precisely to finite semisimple monoidal categories. (This is essentially lemma 3.2 of \cite{BDSPV}.) Slightly more generally, given a finite group $G$, algebras in $\mathbf{2Vect}_G$ correspond to $G$-graded finite semisimple monoidal categories.
\end{Example}

\begin{Example}
Algebras in the Drinfel'd center $\mathscr{Z}(\mathbf{2Vect}_G)$ (see \cite{KTZ} for a construction) are given exactly finite semisimple $G$-crossed monoidal categories (originally introduced in section 2.1 of \cite{T}, see also definition 5.1 of \cite{Gal}). Furthermore, braided algebras in $\mathscr{Z}(\mathbf{2Vect}_G)$ are precisely finite semisimple $G$-crossed braided monoidal categories. We plan on investigating the properties of these algebras further in future work.
\end{Example}

\begin{Example}\label{ex:algebrasbraidedfusion}
Let $\mathcal{B}$ be a braided fusion category.By proposition 3.2 of \cite{BJS}, algebras in $\mathbf{Mod}(\mathcal{B})$ correspond to finite semisimple monoidal categories $\mathcal{C}$ equipped with a central monoidal functor $\mathcal{B}\rightarrow \mathcal{C}$.
\end{Example}

\begin{Definition}\label{def:1morphismalgebras}
Let $A$ and $B$ be two algebras in $\mathfrak{C}$. A 1-morphism of algebras in $\mathfrak{C}$ from $A$ to $B$ consists of a 1-morphism $f:A\rightarrow B$ in $\mathfrak{C}$ together with two invertible 2-morphisms
 
\begin{center}
\begin{tabular}{@{}c c@{}}
$$\begin{tikzcd}[sep=tiny]
AA \arrow[rrr, "f1"] \arrow[dddd, "m^A"'] & & & BA \arrow[rrr, "1f"] &                                     &    & BB \arrow[dddd, "m^B"] \\
                                                  &&&    &  &      &                             \\
                                                   &       {} \arrow[rrrr, "\kappa^f", Rightarrow, shorten <=4ex, shorten >=4ex]         &&&&                   {}    &                              \\
                                                   &               &&&                     &  &                              \\
A \arrow[rrrrrr, "f"']                                &         &&  &                          &    & B,                         
\end{tikzcd}$$

&

$$\begin{tikzcd}[sep=tiny]
I \arrow[rrr, equal] \arrow[ddd, "i^A"'] &                                     &    & I \arrow[ddd, "i_B"] \\
                                                     &  &  {}   &                              \\
                                                   &  {}                            &  &                              \\
A \arrow[rrr, "f"'] \arrow[uuurrr, "\iota^f", Rightarrow, shorten <=4ex, shorten >=4ex]                               &                                     &    & B;                         
\end{tikzcd}$$
\end{tabular}
\end{center}

satisfying:

\begin{enumerate}
\item [a.] We have:
\end{enumerate}

\settoheight{\algebra}{\includegraphics[width=45mm]{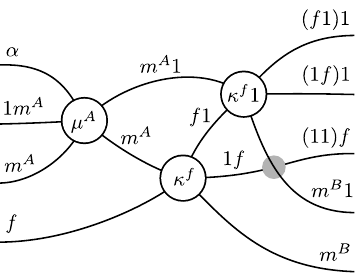}}

\begin{center}
\begin{tabular}{@{}ccc@{}}

\includegraphics[width=45mm]{Module2CategoriesPictures/algebra/morphismassociativity1.pdf} & \raisebox{0.45\algebra}{$=$} &

\includegraphics[width=45mm]{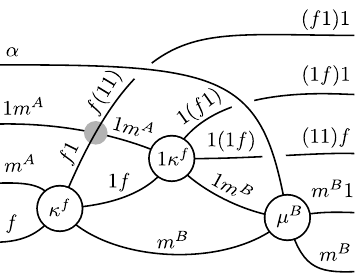};

\end{tabular}
\end{center}

\begin{enumerate}
\item [b.] We have:
\end{enumerate}

\settoheight{\algebra}{\includegraphics[width=45mm]{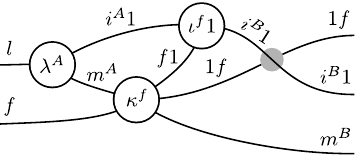}}

\begin{center}
\begin{tabular}{@{}ccc@{}}

\includegraphics[width=45mm]{Module2CategoriesPictures/algebra/morphismunitality1.pdf} & \raisebox{0.45\algebra}{$=$} &

\includegraphics[width=30mm]{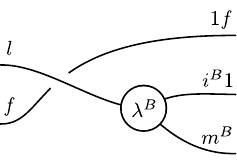};

\end{tabular}
\end{center}

\begin{enumerate}
\item [c.] We have:
\end{enumerate}

\settoheight{\algebra}{\includegraphics[width=45mm]{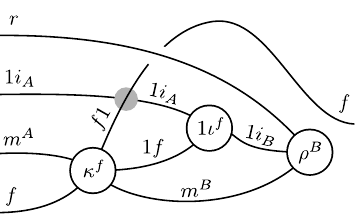}}

\begin{center}
\begin{tabular}{@{}ccc@{}}

\includegraphics[width=45mm]{Module2CategoriesPictures/algebra/morphismunitality3.pdf} & \raisebox{0.45\algebra}{$=$} &

\includegraphics[width=30mm]{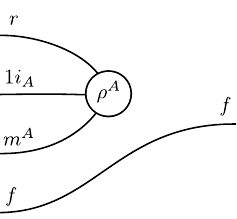}.

\end{tabular}
\end{center}
\end{Definition}

\begin{Example}
Expanding on example \ref{ex:algebrasbraidedfusion} slightly, one can check that 1-morphisms of algebras in $\mathbf{Mod}(\mathcal{B})$ correspond to monoidal functors compatible with the central structure.
\end{Example}

\begin{Definition}\label{def:2morphismalgebras}
Let $f,g:A\rightarrow B$ be two 1-morphisms of algebras in $\mathfrak{C}$. A 2-morphism of algebras in $\mathfrak{C}$ from $f$ to $g$ is a 2-morphism $\delta:f\Rightarrow g$ in $\mathfrak{C}$ such that:

\begin{enumerate}
\item [a.] We have:
\end{enumerate}

\settoheight{\algebra}{\includegraphics[width=30mm]{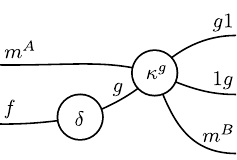}}

\begin{center}
\begin{tabular}{@{}ccc@{}}

\includegraphics[width=30mm]{Module2CategoriesPictures/algebra/algebra2morphism1.pdf} & \raisebox{0.45\algebra}{$=$} &

\includegraphics[width=30mm]{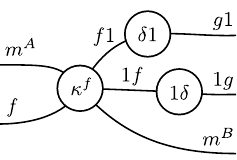},

\end{tabular}
\end{center}

\begin{enumerate}
\item [b.] We have:
\end{enumerate}

\settoheight{\algebra}{\includegraphics[width=30mm]{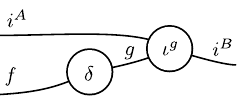}}

\begin{center}
\begin{tabular}{@{}ccc@{}}

\includegraphics[width=30mm]{Module2CategoriesPictures/algebra/algebra2morphism3.pdf} & \raisebox{0.45\algebra}{$=$} &

\includegraphics[width=22.5mm]{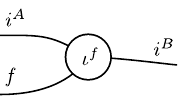}.

\end{tabular}
\end{center}
\end{Definition}

\subsection{Modules}

Let us fix $A$ an algebra in $\mathfrak{C}$.

\begin{Definition}\label{def:module}
A right $A$-module consists of:
\begin{enumerate}
    \item An object $M$ of $\mathfrak{C}$;
    \item A 1-morphism $n:M\Box A\rightarrow M$;
    \item Two 2-isomorphisms
\end{enumerate}

\begin{center}
\begin{tabular}{@{}c c@{}}
$\begin{tikzcd}[sep=tiny]
                           & M(AA) \arrow[ld, "1m"'] &                                 & (MA)A \arrow[ll, "\alpha"'] \arrow[rd, "n1"] &                           \\
MA \arrow[rrdd, "n"']  & {}\arrow[rr, "\nu^{M}", Rightarrow, shorten >=5ex, shorten <=5ex ]                                                      &  &  {}                                      & MA, \arrow[lldd, " n"] \\
                           &                                                        &                               &                                       &                           \\
                           &                                                        & M                              &                                       &                          
\end{tikzcd}$

&

$$\begin{tikzcd}[sep=small]
MI  \arrow[rr, "1i"] &{}\arrow[dd, Rightarrow,"\rho^{M}", near start, shorten > = 2ex, shorten < = 1ex]  & MA \arrow[dd, "n"]\\
&  & \\
M \arrow[uu, "r"] & {} & M; \arrow[equal, ll]
\end{tikzcd}$$
\end{tabular}
\end{center}
    
Such that:
\begin{enumerate}
\item [a.] We have:
\end{enumerate}

\settoheight{\algebra}{\includegraphics[width=52.5mm]{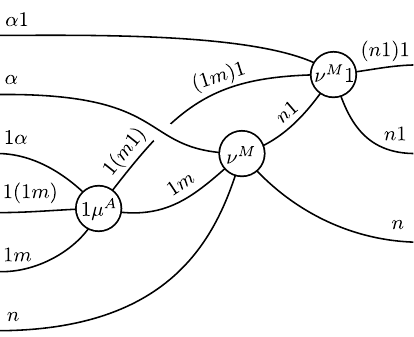}}

\begin{center}
\begin{tabular}{@{}ccc@{}}

\includegraphics[width=52.5mm]{Module2CategoriesPictures/modules/associativity1.pdf} & \raisebox{0.45\algebra}{$=$} &

\includegraphics[width=56.25mm]{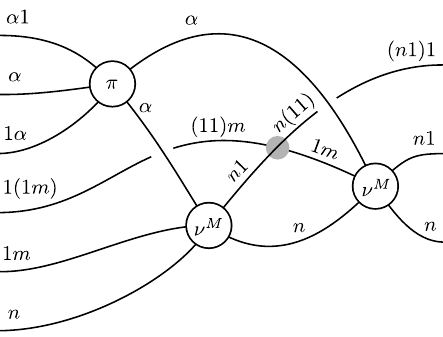},

\end{tabular}
\end{center}

\begin{enumerate}
\item [b.] We have:
\end{enumerate}

\settoheight{\algebra}{\includegraphics[width=45mm]{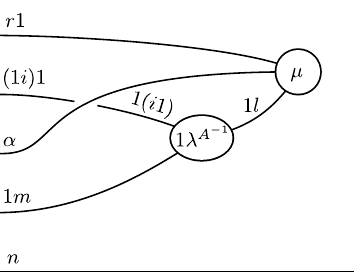}}

\begin{center}
\begin{tabular}{@{}ccc@{}}

\includegraphics[width=45mm]{Module2CategoriesPictures/modules/unitality1.pdf} & \raisebox{0.45\algebra}{$=$} &

\includegraphics[width=37.5mm]{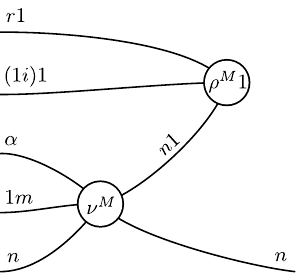}.

\end{tabular}
\end{center}
\end{Definition}

\begin{Example}
The algebra $A$ can canonically be viewed as a right $A$-module using the 2-isomorphisms $\mu^A$ and $\rho^A$.
\end{Example}

\begin{Example}
Given an algebra $A$ in $\mathbf{2Vect}$, which corresponds to a finite semisimple monoidal category $\mathcal{C}$ via example \ref{ex:algebra2vect}, right $A$-modules in $\mathbf{2Vect}$ correspond exactly to finite semisimple right $\mathcal{C}$-module categories.
\end{Example}

\begin{Example}
Slightly more generally, using example \ref{ex:algebra2vectG}, we have that giving an algebra $A$ in $\mathbf{2Vect}_G$ is the same thing as giving a $G$-graded finite semisimple monoidal category $\mathcal{C}$. Right $A$-modules in $\mathbf{2Vect}_G$ are precisely those $G$-graded finite semisimple right $\mathcal{C}$-module categories $\mathcal{M}$ for which the action $\mathcal{M}\times\mathcal{C}\rightarrow\mathcal{M}$ is compatible with the $G$-gradings.
\end{Example}

\begin{Example}
Let $A$ be an algebra in $\mathbf{Mod}(\mathcal{B})$ for some braided fusion category $\mathcal{B}$. By example \ref{ex:algebrasbraidedfusion}, this corresponds to a fusion category $\mathcal{C}$ equipped with a central monoidal functor $\mathcal{B}\rightarrow \mathcal{C}$. One can check that right $A$-modules in $\mathbf{Mod}(\mathcal{B})$ are exactly given by finite semisimple right $\mathcal{C}$-module categories.
\end{Example}

\begin{Definition}\label{def:modulemap}
Let $M$ and $N$ be two $A$-modules. A right $A$-module 1-morphism consists of a 1-morphism $f:M\rightarrow N$ in $\mathfrak{C}$ together with an invertible 2-morphism

$$\begin{tikzcd}[sep=tiny]
MA \arrow[rrr, "n^M"] \arrow[ddd, "f1"'] &                                     &    & M \arrow[ddd, "f"] \\
                                                     &  &  {}    &                              \\
                                                   &       {} \arrow[ur, "\psi^f", Rightarrow]                              &  &                              \\
NA \arrow[rrr, "n^N"']                                &                                     &    & N,                         
\end{tikzcd}$$
subject to the coherence relations:

\begin{enumerate}
\item [a.] We have:
\end{enumerate}

\settoheight{\algebra}{\includegraphics[width=52.5mm]{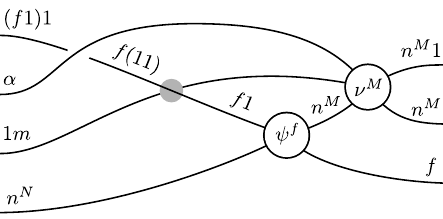}}

\begin{center}
\begin{tabular}{@{}ccc@{}}

\includegraphics[width=52.5mm]{Module2CategoriesPictures/modules/map1.pdf} & \raisebox{0.45\algebra}{$=$} &

\includegraphics[width=56.25mm]{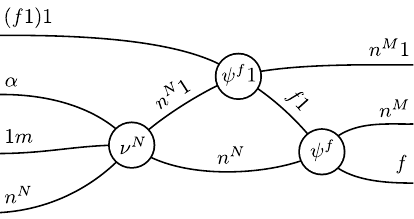};

\end{tabular}
\end{center}

\begin{enumerate}
\item [b.] We have:
\end{enumerate}

\settoheight{\algebra}{\includegraphics[width=30mm]{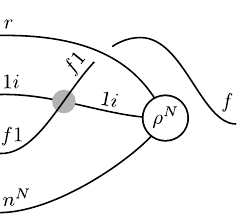}}

\begin{center}
\begin{tabular}{@{}ccc@{}}

\includegraphics[width=30mm]{Module2CategoriesPictures/modules/map3.pdf} & \raisebox{0.45\algebra}{$=$} &

\includegraphics[width=30mm]{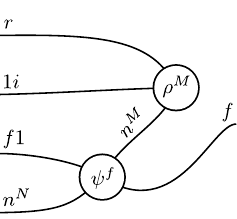}.

\end{tabular}
\end{center}
\end{Definition}

\begin{Definition}\label{def:moduleintertwiner}
Let $M$ and $N$ be two right $A$-modules, and $f,g:M\rightarrow M$ two right $A$-module 1-morphisms. A right $A$-module 2-morphism $f\Rightarrow g$ is a 2-morphism $\gamma:f\Rightarrow g$ in $\mathfrak{C}$ that satisfies the equality

\settoheight{\algebra}{\includegraphics[width=30mm]{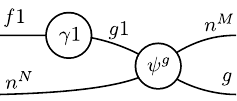}}

\begin{center}
\begin{tabular}{@{}ccc@{}}

\includegraphics[width=30mm]{Module2CategoriesPictures/modules/2morphism1.pdf} & \raisebox{0.45\algebra}{$=$} &

\includegraphics[width=30mm]{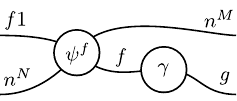}.

\end{tabular}
\end{center}
\end{Definition}

\begin{Remark}
One can similarly define left $A$-modules, left $A$-module 1-morphisms, and left $A$-module 2-morphisms. Further, given two algebras $A$ and $B$ in $\mathfrak{C}$, it is possible to define $(A,B)$-bimodules, etc. We leave the details to the reader, but present an alternative perspective below.
\end{Remark}

\begin{Remark}\label{rem:pseudomonads}
Observe that any algebra $A$ in $\mathfrak{C}$ yields a pseudo-monad $T:=(-)\Box A:\mathfrak{C}\rightarrow \mathfrak{C}$ (see \cite{CMV} for the definition). In particular, it is not hard to see that our definition of a right $A$-module in $\mathfrak{C}$ corresponds exactly to that of a pseudo-$T$-algebra. Further, right $A$-module 1-morphisms are precisely 1-morphism of $T$-algebras, and right $A$-module 2-morphisms are precisely 2-morphisms of $T$-algebras. Through this perspective the following lemmas are standard.
\end{Remark}

\begin{Lemma}\label{lem:mod2cat}
Right $A$-modules in $\mathfrak{C}$, right $A$-module 1-morphisms, and right $A$-module 2-morphisms form a 2-category, which we denote by $\mathbf{Mod}_{\mathfrak{C}}(A)$.
\end{Lemma}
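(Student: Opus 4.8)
The plan is to verify the 2-category axioms directly, or invoke the pseudomonad perspective of Remark \ref{rem:pseudomonads}. The cleanest route is the latter: since any algebra $A$ in $\mathfrak{C}$ gives rise to the pseudo-monad $T = (-)\Box A$ on $\mathfrak{C}$, and right $A$-modules, right $A$-module $1$-morphisms, and right $A$-module $2$-morphisms coincide respectively with pseudo-$T$-algebras, their $1$-morphisms, and their $2$-morphisms, the statement reduces to the standard fact that pseudo-$T$-algebras for a pseudo-monad $T$ on a $2$-category form a $2$-category (the ``Eilenberg--Moore object'' of $T$, see \cite{CMV}). So the proof amounts to checking that the dictionary of Remark \ref{rem:pseudomonads} is faithful to the definitions in Definitions \ref{def:module}, \ref{def:modulemap}, \ref{def:moduleintertwiner}.

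If one prefers a self-contained argument, I would proceed as follows. First, fix two right $A$-modules $M$ and $N$ and check that right $A$-module $1$-morphisms $M\to N$ together with right $A$-module $2$-morphisms form a category $Hom_{\mathbf{Mod}_{\mathfrak{C}}(A)}(M,N)$: vertical composition of $2$-morphisms in $\mathfrak{C}$ preserves the constraint in Definition \ref{def:moduleintertwiner} (this is immediate by pasting), identity $2$-morphisms are module $2$-morphisms, and associativity/unitality are inherited from $\mathfrak{C}$. Next, given a third module $P$ and $1$-morphisms $f\colon M\to N$, $h\colon N\to P$, I would equip the composite $h\circ f$ with the pasting of $\psi^f$ and $\psi^h$ (together with the appropriate interchanger), and check that the resulting $2$-isomorphism satisfies axioms a and b of Definition \ref{def:modulemap}; this is a string-diagram manipulation using that $\psi^f$ and $\psi^h$ each satisfy those axioms, plus naturality of the interchangers. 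Horizontal composition of module $2$-morphisms is then just horizontal composition in $\mathfrak{C}$, and one checks it lands among module $2$-morphisms. The identity $1$-morphism on $M$ carries the evident structure $2$-isomorphism built from $\phi^{\Box}$ and unit constraints.

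Finally, I would exhibit the coherence data making this a (non-strict) $2$-category: the associator $(k\circ h)\circ f \cong k\circ(h\circ f)$ and unitors are those of $\mathfrak{C}$, and one must verify they are themselves right $A$-module $2$-morphisms — that is, that they satisfy the constraint in Definition \ref{def:moduleintertwiner}. This follows because the structure $2$-isomorphisms attached to the composite $1$-morphisms are built by pasting, and the associativity/unit coherence in $\mathfrak{C}$ relates the two pastings. The pentagon and triangle axioms for $\mathbf{Mod}_{\mathfrak{C}}(A)$ then hold because they hold in $\mathfrak{C}$.

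The main obstacle is purely bookkeeping: assembling the structure $2$-isomorphism on a composite of module $1$-morphisms so that Definition \ref{def:modulemap} axioms a and b hold requires carefully tracking interchangers $\phi^{\Box}$ and the coherence data of $\mathfrak{C}$, and a brute-force verification is lengthy. Invoking coherence (Proposition \ref{prop:coherence}, or rather strictification of $\mathfrak{C}$ via \cite{Gur}) to assume $\mathfrak{C}$ strict cubical removes all the interchanger clutter and makes the checks routine; alternatively, the pseudomonad argument sidesteps the computation entirely by citing \cite{CMV}. I would present the pseudomonad argument as the proof, with a remark that a direct verification is possible.
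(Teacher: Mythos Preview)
Your proposal is correct and matches the paper's own proof almost exactly: the paper likewise appeals to Remark~\ref{rem:pseudomonads} and the standard fact that pseudo-algebras for a pseudo-monad form a 2-category, noting that a direct verification is also possible, and records precisely the pasted 2-isomorphism $\psi^{g\circ f}$ you describe for the composite of two module 1-morphisms. Your additional outline of the direct verification and the suggestion to strictify are sensible elaborations but not needed beyond what the paper says.
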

\begin{proof}
This can be shown directly, or one can use remark \ref{rem:pseudomonads}, and appeal to the well-known fact that there is a 2-category of pseudo-algebras associated to any pseudo-monad. For latter use, let us mention that the composite of two right $A$-module maps $f:M\rightarrow N$ and $g:N\rightarrow P$ is given by $g\circ f:M\rightarrow P$ equipped with the 2-isomorphism

\settoheight{\algebra}{\includegraphics[width=30mm]{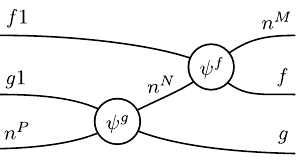}}

\begin{center}
\begin{tabular}{@{}cc@{}}

\raisebox{0.45\algebra}{$\psi^{g\circ f}=$} &

\includegraphics[width=30mm]{Module2CategoriesPictures/modules/compositionmap.pdf}.

\end{tabular}
\end{center}
\end{proof}

\begin{Remark}
If we assume that $\mathfrak{C}$ is strict cubical, then $\mathbf{Mod}_{\mathfrak{C}}(A)$ is in fact a strict 2-category.
\end{Remark}

\begin{Notation}
Let $M$, $N$ be two right $A$-modules. We use $Hom_A(M,N)$ to denote the category of right $A$-module 1-morphisms $M\rightarrow N$, and right $A$-module 2-morphisms between them.
\end{Notation}

\begin{Lemma}\label{lem:freeforgetadj}
For any $C$ in $\mathfrak{C}$, and right $A$-module $M$, there is an adjoint 2-natural equivalence $$Hom_A(C\Box A, M)\simeq Hom_{\mathfrak{C}}(C,M).$$
\end{Lemma}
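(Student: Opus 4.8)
The statement to prove is a categorified version of the free-forgetful adjunction: for any object $C$ of $\mathfrak{C}$ and any right $A$-module $M$, there is an adjoint 2-natural equivalence $Hom_A(C\Box A, M)\simeq Hom_{\mathfrak{C}}(C,M)$. The plan is to construct the two 2-functors explicitly and exhibit the unit and counit, following the pattern used in the proof of Lemma \ref{lem:adjunctionduals}. By Proposition \ref{prop:coherence} (coherence) we may assume $\mathfrak{C}$ is strict cubical, which removes the associator and unitor clutter.

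First I would define the comparison functors. The forgetful direction $Hom_A(C\Box A, M)\to Hom_{\mathfrak{C}}(C,M)$ sends a module 1-morphism $f\colon C\Box A\to M$ to the composite $f\circ(C\Box i)\colon C\to M$ (precompose with the unit of $A$), and a module 2-morphism to its whiskering by $C\Box i$. The free direction $Hom_{\mathfrak{C}}(C,M)\to Hom_A(C\Box A,M)$ sends $h\colon C\to M$ to $n^M\circ(h\Box A)\colon C\Box A\to M$, equipped with the $A$-module structure 2-isomorphism built from the associativity constraint $\nu^M$ of the module $M$; a 2-morphism $\gamma\colon h\Rightarrow h'$ goes to $n^M\circ(\gamma\Box A)$, which one checks is an $A$-module 2-morphism using the module axioms. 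One must verify that the free direction genuinely lands in $Hom_A$: the coherence relations (a) and (b) of Definition \ref{def:modulemap} for $n^M\circ(h\Box A)$ follow from the module associativity and unitality axioms (a), (b) of Definition \ref{def:module} applied to $M$.

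Next I would produce the unit and counit of the adjunction. The counit at $f\colon C\Box A\to M$ is the $A$-module 2-morphism $n^M\circ(f\circ(C\Box i))\Box A\ \Rightarrow\ f$ obtained by using the module structure 2-isomorphism $\psi^f$ of $f$ to rewrite $n^M\circ(f\Box A)$ as $f\circ(n^{C\Box A}\Box\ \ldots)$ — concretely $f\circ(C\Box m)$ after strictification — and then applying the unitality 2-isomorphism $\rho^A$ of $A$ to collapse $C\Box(i\Box A)$ followed by $C\Box m$ down to the identity. The unit at $h\colon C\to M$ is the 2-isomorphism $h\ \cong\ n^M\circ(h\Box A)\circ(C\Box i)$, which is just $h$ whiskered by $\rho^M$ (the unitality of the module $M$). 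Both unit and counit are invertible 2-morphisms, so the adjunction will automatically be an adjoint equivalence; and 2-naturality in $C$ (and in $M$, which is implicit) is manifest from the construction since every piece is built from whiskerings and the structure data of $A$ and $M$, which are natural.

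The main obstacle — and the only genuinely non-trivial part — will be verifying the two triangle identities (swallowtail-type equations) for the unit and counit. These reduce to string-diagram manipulations using the module associativity axiom (a) of Definition \ref{def:module}, the module unitality axiom (b), and the algebra unitality axiom (b) of Definition \ref{def:algebra}, together with the interchanger coherence built into strict cubical monoidal 2-categories. This is exactly the kind of calculation that in Lemma \ref{lem:adjunctionduals} is dispatched by "the triangle identities follow from the swallowtail equations"; here they follow from the defining coherence axioms of an algebra and a module. As remarked in Remark \ref{rem:pseudomonads}, the whole statement is an instance of the standard free-forgetful adjunction for the pseudo-monad $T=(-)\Box A$, so one could alternatively simply cite that fact; I would include the explicit construction above for completeness and for later use of the explicit unit/counit.
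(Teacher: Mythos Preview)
Your proposal is correct and aligned with the paper, which dispatches the lemma in a single sentence by invoking Remark~\ref{rem:pseudomonads}: the statement is an instance of the free-forgetful 2-adjunction for the pseudo-monad $T=(-)\Box A$. Your explicit construction of the two functors, unit, and counit is exactly what that citation unpacks to, and you yourself note this alternative at the end; one small slip is that the counit uses $\lambda^A$ (collapsing $m\circ(i\Box 1)$) rather than $\rho^A$, but otherwise the argument goes through.
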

\begin{proof}
Following remark \ref{rem:pseudomonads}, this is an instance of the standard free-forgetful 2-adjunction for the pseudomonad $(-)\Box A$.
\end{proof}

\begin{Remark}
We can generalize the perspective of remark \ref{rem:pseudomonads}. We can consider the pseudo-monad given on $\mathfrak{C}$ by $T':=A\Box (-):\mathfrak{C}\rightarrow \mathfrak{C}$. Pseudo-algebras for $T'$ are precisely left $A$-modules, which shows that there is a 2-category of left $A$-modules in $\mathfrak{C}$. Given another algebra $B$, we can consider the pseudo-monad $T'':=(A\Box (-))\Box B:\mathfrak{C}\rightarrow \mathfrak{C}$. Pseudo-algebras for $T''$ are exactly $(A,B)$-bimodules in $\mathfrak{C}$. Thence, we get a 2-category of $(A,B)$-bimodules in $\mathfrak{C}$, which we denote by $\mathbf{Bimod}_{\mathfrak{C}}(A,B)$.
\end{Remark}

\subsection{Properties}

Expanding on lemma \ref{lem:module2cat} a little bit, we can prove the following result.

\begin{Proposition}\label{prop:catmodmodcat}
The 2-category $\mathbf{Mod}_{\mathfrak{C}}(A)$ is a left $\mathfrak{C}$-module 2-category.
\end{Proposition}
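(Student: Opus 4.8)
The plan is to exhibit the left $\mathfrak{C}$-module structure on $\mathbf{Mod}_{\mathfrak{C}}(A)$ concretely, using the action $\Diamond:\mathfrak{C}\times\mathbf{Mod}_{\mathfrak{C}}(A)\rightarrow \mathbf{Mod}_{\mathfrak{C}}(A)$ given on objects by $C\Diamond M := C\Box M$, where the right $A$-module structure on $C\Box M$ is $(1_C\Box n^M)$ precomposed with the associator $\alpha_{C,M,A}$, i.e.\ the action only touches the $M$-factor. Using proposition \ref{prop:coherence} (coherence for module 2-categories), or rather just the coherence theorem for monoidal 2-categories, I would reduce to the case where $\mathfrak{C}$ is strict cubical, which makes $\mathbf{Mod}_{\mathfrak{C}}(A)$ a strict 2-category and kills the associators $\alpha$; then $C\Diamond M = C\Box M$ with module structure $1_C\Box n^M$ literally, and $C\Diamond(-)$ on 1- and 2-morphisms of modules is just $C\Box(-)$, with the module-1-morphism structure 2-isomorphism $\psi^{C\Box f} := 1_C\Box \psi^f$ (decorated with the relevant interchangers $\phi^\Box$). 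One must check this is a well-defined right $A$-module 1-morphism (the two coherence relations a, b of definition \ref{def:modulemap}), which follows by whiskering the relations for $\psi^f$ with $C$ and using naturality of the interchangers; and likewise that $C\Box\gamma$ is a module 2-morphism whenever $\gamma$ is.

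Next I would verify that this assignment is a 2-functor $\mathfrak{C}\times\mathbf{Mod}_{\mathfrak{C}}(A)\rightarrow\mathbf{Mod}_{\mathfrak{C}}(A)$: functoriality in the $\mathfrak{C}$-variable and in the module variable, with structure 2-isomorphisms $\phi^{\Diamond}$ inherited from those of $\Box$ together with the composition 2-isomorphisms $\psi^{g\circ f}$ described in the proof of lemma \ref{lem:mod2cat}; the coherence axioms for a 2-functor then reduce to those already known for $\Box$ and the module axioms. Then I would supply the module-2-category data from definition \ref{def:module2catunpacked}: the equivalences $\alpha^{\mathbf{Mod}}_{A',B,M}:(A'\Box B)\Diamond M\rightarrow A'\Diamond(B\Diamond M)$ and $l^{\mathbf{Mod}}_M:I\Diamond M\rightarrow M$ are taken to be $\alpha_{A',B,M}$ and $l_M$ of $\mathfrak{C}$ themselves, which are automatically right $A$-module 1-morphisms because the action is confined to the $M$-slot and because $\alpha$, $l$ are 2-natural; the modifications $\mu^{\mathbf{Mod}}$, $\lambda^{\mathbf{Mod}}$, $\pi^{\mathbf{Mod}}$ are $\mu$, $\lambda$, $\pi$ of $\mathfrak{C}$, and one checks they are module 2-morphisms and satisfy axioms a, b, c of definition \ref{def:module2catunpacked} — but these are precisely the corresponding pentagon and triangle identities for the monoidal structure of $\mathfrak{C}$, hence already true. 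Equivalently, and perhaps more cleanly, I would phrase this via definition \ref{def:module2cat}: produce a monoidal 2-functor $H:\mathfrak{C}\rightarrow End(\mathbf{Mod}_{\mathfrak{C}}(A))$ sending $C$ to the 2-functor $C\Box(-)$, whose monoidal coherence data come from the associator and unitors of $\mathfrak{C}$, and invoke lemma \ref{lem:module2cat}.

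The main obstacle, and the only place where genuine verification is needed, is confirming that $C\Box(-)$ genuinely lands in $\mathbf{Mod}_{\mathfrak{C}}(A)$ and is a strictly (or coherently) associative, unital action at the level of the $\psi$-data — i.e.\ that all the decorating interchangers $\phi^\Box$ assemble correctly. In the strict cubical setting this is reduced to bookkeeping with interchangers, using their naturality and the interchanger coherence axioms; conceptually it is transparent because the $\mathfrak{C}$-action and the $A$-action act on ``different tensor slots'' and so commute up to canonical interchanger. I would also remark that the same argument shows $\mathbf{Mod}_{\mathfrak{C}}(A)$ is a $(\mathfrak{C},?)$-structure compatibly, but for this proposition it suffices to record the left $\mathfrak{C}$-module structure. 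Finally, I would note for later use the explicit form of the action on hom-categories, since subsequent sections (the enriched $\underline{Hom}$ 2-functor) will need it.
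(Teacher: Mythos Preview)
Your proposal is correct and matches the paper's proof essentially point-for-point: strictify $\mathfrak{C}$ to cubical, define $C\Diamond M=C\Box M$ with $n^{C\Box M}=C\Box n^M$ and $\nu,\rho$ whiskered, give the module 1-morphism structure on $j\Box f$ via $\psi^f$ plus interchangers, observe that in the strict cubical setting $\alpha^{\mathbf{Mod}}$, $l^{\mathbf{Mod}}$, $\pi^{\mathbf{Mod}}$, $\lambda^{\mathbf{Mod}}$, $\mu^{\mathbf{Mod}}$ are all identities, and identify the sole nontrivial verification as showing the interchanger $\phi^{\Box}_{(k,g),(j,f)}$ is a right $A$-module 2-morphism. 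The paper displays exactly that last check as a string-diagram equality; your identification of it as ``the main obstacle'' is spot on.
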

\begin{proof}
Using the coherence theorem for monoidal 2-categories, we may assume that $\mathfrak{C}$ is a strict cubical monoidal 2-category. As pointed out above, this implies that $\mathbf{Mod}_{\mathfrak{C}}(A)$ is a strict 2-category. Let us begin by constructing the 2-functor $\Box:\mathbf{Mod}_{\mathfrak{C}}(A)\times\mathfrak{C}\rightarrow \mathbf{Mod}_{\mathfrak{C}}(A)$ providing us with the desired action.

Firstly, given a right $A$-module $M$ and an object $C$ of $\mathfrak{C}$, the object $C\Box M$ can be endowed with a canonical right $A$-module structure as follows: The action 1-morphism is given by $C\Box n^M:C\Box M\Box A\rightarrow C\Box M$. The structure 2-isomorphisms are given by $\nu^{C\Box M}=C\Box \nu^{M}$ and $\rho^{C\Box M}=C\Box \rho^M$. Given $f:M\rightarrow N$ a right $A$-module 1-morphism and $j:C\rightarrow D$ a 1-morphism in $\mathfrak{C}$, the left $A$-module structure on $j\Box f = (D\Box f)\circ (j\Box M)$ is given by:

\settoheight{\algebra}{\includegraphics[width=37.5mm]{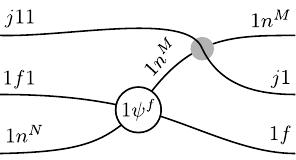}}

\begin{center}
\begin{tabular}{@{}cc@{}}

\raisebox{0.45\algebra}{$\psi^{j\Box f}=$} &

\includegraphics[width=37.5mm]{Module2CategoriesPictures/modules/tensormap.pdf}.

\end{tabular}
\end{center}

Finally, given $\gamma:f\Rightarrow g$ a right $A$-module 2-morphism, and $\alpha:j\Rightarrow k$ a 2-morphism in $\mathfrak{C}$, we see that $\alpha\Box\gamma$ is a right $A$-module 2-morphism.

Thanks to our strictness hypothesis, the only non-trivial structure 2-isomorphisms that are not identity 2-morphisms are the interchangers $\phi$ for the 2-functor $\Box:\mathbf{Mod}_{\mathfrak{C}}(A)\times \mathfrak{C}\rightarrow \mathbf{Mod}_{\mathfrak{C}}(A)$. Given $f:M\rightarrow N$, $g:N\rightarrow P$ two right $A$-module 1-morphisms, and $j:C\rightarrow D$, $k:D\rightarrow E$ two 1-morphisms in $\mathfrak{C}$, we define $\phi_{(k,g),(j,f)}:=\phi^{\Box}_{(k,g),(j,f)}$, the interchanger for $\Box:\mathfrak{C}\times\mathfrak{C}\rightarrow\mathfrak{C}$. It follows easily from the definition that $\phi$ is suitably natural and satisfies the required coherence axioms. It remains to be proven that $\phi_{(k,g),(j,f)}$ is a right $A$-module 2-morphism. This follows from the fact that the following two string diagrams are equal:

\settoheight{\algebra}{\includegraphics[width=60mm]{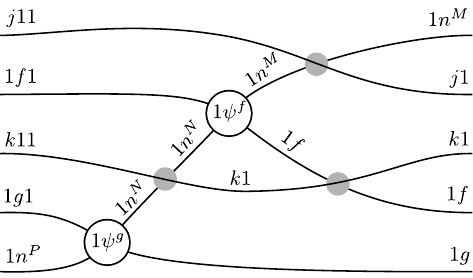}}

\begin{center}
\begin{tabular}{@{}ccc@{}}

\includegraphics[width=60mm]{Module2CategoriesPictures/modules/tensor2morphism1.pdf} & \raisebox{0.45\algebra}{$=$} &

\includegraphics[width=45mm]{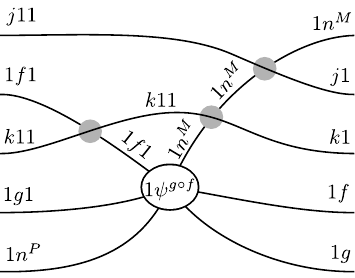}.

\end{tabular}
\end{center}
    
As we have assumed that $\mathfrak{C}$ is strict cubical, we can safely set $\alpha^{\mathbf{Mod}_{\mathfrak{C}}(A)} = Id$, and $l^{\mathbf{Mod}_{\mathfrak{C}}(A)}=Id$ as adjoint 2-natural equivalences, and $\pi^{\mathbf{Mod}_{\mathfrak{C}}(A)}=Id$, $\lambda^{\mathbf{Mod}_{\mathfrak{C}}(A)}=Id$, and $\mu^{\mathbf{Mod}_{\mathfrak{C}}(A)}=Id$ as modifications. Thence, it is not hard to show that the remaining coherence axioms hold.
\end{proof}

\begin{Remark}
Let $R$ be a ring, and assume that $\mathfrak{C}$ is an $R$-linear monoidal 2-category. Given algebras $A$, $B$ algebras in $\mathfrak{C}$, we see that $\mathbf{Mod}_{\mathfrak{C}}(A)$ and $\mathbf{Bimod}_{\mathfrak{C}}(A,B)$ are $R$-linear 2-categories. Further, as the left $\mathfrak{C}$-module structure on $\mathbf{Mod}_{\mathfrak{C}}(A)$ comes from the monoidal structure of $\mathfrak{C}$, we see readily that the 2-functor $\mathfrak{C}\times \mathbf{Mod}_{\mathfrak{C}}(A)\rightarrow \mathbf{Mod}_{\mathfrak{C}}(A)$ is $R$-bilinear, whence $\mathbf{Mod}_{\mathfrak{C}}(A)$ satisfies the conditions of convention \ref{con:linearity}.
\end{Remark}

Now, we explain how 1-morphisms of algebras yield 2-functors between 2-categories of modules. 

\begin{Lemma}\label{lem:1morphismalgebras2functor}
Let $f:A\rightarrow B$ be a 1-morphism of algebras in $\mathfrak{C}$. Then, there is a 2-functor $f^{\bigstar}:\mathbf{Mod}_{\mathfrak{C}}(B)\rightarrow \mathbf{Mod}_{\mathfrak{C}}(A)$.
\end{Lemma}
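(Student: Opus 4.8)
The plan is to construct $f^{\bigstar}$ as \emph{restriction of scalars} along $f$; conceptually, $f$ induces a morphism of pseudo-monads $(-)\Box A\Rightarrow (-)\Box B$ on $\mathfrak{C}$ (cf.\ Remark \ref{rem:pseudomonads}), and $f^{\bigstar}$ is the induced 2-functor between the associated 2-categories of pseudo-algebras, but I would spell out the construction directly, as in the proof of Proposition \ref{prop:catmodmodcat}. Invoking the coherence theorem for monoidal 2-categories, we may assume $\mathfrak{C}$ is strict cubical, so that $\mathbf{Mod}_{\mathfrak{C}}(A)$ and $\mathbf{Mod}_{\mathfrak{C}}(B)$ are strict 2-categories and the only structure 2-isomorphisms one has to track are the interchangers $\phi^{\Box}$, the algebra data $\mu^{A},\rho^{A},\lambda^{A},\mu^{B},\rho^{B},\lambda^{B}$, and the coherence 2-isomorphisms $\kappa^{f},\iota^{f}$ of the algebra 1-morphism $f$. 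On objects, given a right $B$-module $(M,n^{M},\nu^{M},\rho^{M})$, we let $f^{\bigstar}(M)$ be the object $M$ equipped with the $A$-action $n^{M}\circ (M\Box f):M\Box A\rightarrow M\Box B\rightarrow M$. The 2-isomorphism $\nu^{f^{\bigstar}(M)}$ witnessing associativity with respect to $m^{A}$ is assembled from $\nu^{M}$, a whiskered copy of $\kappa^{f}$, and interchangers; the unitor $\rho^{f^{\bigstar}(M)}$ is assembled from $\rho^{M}$ and $\iota^{f}$. One then checks axioms a and b of Definition \ref{def:module}: the associativity axiom follows by a string diagram manipulation combining axiom a of Definition \ref{def:module} for $M$ with axiom a of Definition \ref{def:1morphismalgebras} for $f$, and the unitality axiom follows similarly from axiom b of Definition \ref{def:module} together with axioms b and c of Definition \ref{def:1morphismalgebras}.

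On 1-morphisms, a right $B$-module 1-morphism $(g,\psi^{g}):M\rightarrow N$ is sent to the underlying 1-morphism $g$, equipped with the invertible 2-morphism obtained by whiskering $\psi^{g}$ with $M\Box f$ and inserting the interchanger relating $(g\Box B)\circ (M\Box f)$ and $(N\Box f)\circ (g\Box A)$. A diagram chase shows this satisfies the relations of Definition \ref{def:modulemap} for the new actions, using the corresponding relations for $\psi^{g}$. On 2-morphisms, a right $B$-module 2-morphism $\gamma$ is sent to itself: the defining equation of Definition \ref{def:moduleintertwiner} for $f^{\bigstar}(g)$ and $f^{\bigstar}(g')$ is obtained from that for $g$ and $g'$ by whiskering with $M\Box f$, hence holds.

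It remains to provide the 2-functor structure. Since $f^{\bigstar}$ is the identity on underlying 1-morphisms and composition in the module 2-categories is computed as composition in $\mathfrak{C}$ (Lemma \ref{lem:mod2cat}), we take the 2-isomorphisms $\phi^{f^{\bigstar}}_{g',g}$ and $\phi^{f^{\bigstar}}_{M}$ to be identities; one verifies, using the composition formula of Lemma \ref{lem:mod2cat} and interchange, that the module structure $\psi$ on $f^{\bigstar}(g'\circ g)$ coincides with that on $f^{\bigstar}(g')\circ f^{\bigstar}(g)$, so these identities are indeed right $A$-module 2-morphisms, and the 2-functor axioms are then immediate.

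I expect the main obstacle to be the verification of axiom a of Definition \ref{def:module} for $\nu^{f^{\bigstar}(M)}$: this is the single step where the module associativity of $M$, the multiplicativity coherence $\kappa^{f}$ together with its own axiom a, and a careful bookkeeping of interchangers must all be combined. Once the strict cubical reduction is in force this becomes a routine, if somewhat lengthy, string diagram computation, and the remaining verifications are comparatively short.
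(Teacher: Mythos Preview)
Your proposal is correct and follows essentially the same approach as the paper: both construct $f^{\bigstar}$ as restriction of scalars, reduce to the strict cubical case, define the new action as $n^{M}\circ(M\Box f)$ with structure 2-isomorphisms built from $\nu^{M},\rho^{M},\kappa^{f},\iota^{f}$ and interchangers, act as the identity on underlying 1- and 2-morphisms, and conclude that $f^{\bigstar}$ is a strict 2-functor. The paper's proof is in fact terser than yours, simply displaying the relevant string diagrams and leaving the coherence verifications to the reader.
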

\begin{proof}
Without loss of generality, we may assume that the $\mathfrak{C}$ is strict cubical. Let $M$ be a right module over $B$. We set $f^{\bigstar}M$ be the right $A$-module given by the object $M$, the 1-morphisms $n^{f^{\bigstar}M}:= n^M\circ (M\Box f)$, and the 2-isomorphisms

\settoheight{\algebra}{\includegraphics[width=45mm]{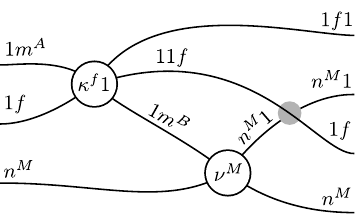}}

\begin{center}
\begin{tabular}{@{}cc@{}}

\raisebox{0.45\algebra}{$\nu^{f^{\bigstar}M}=$} &

\includegraphics[width=45mm]{Module2CategoriesPictures/moduleslemmas/pullbackmodulenu.pdf},

\end{tabular}

\settoheight{\algebra}{\includegraphics[width=30mm]{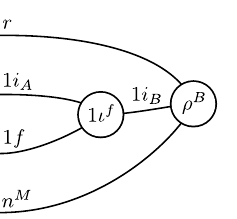}}

\begin{tabular}{@{}cc@{}}

\raisebox{0.45\algebra}{$\rho^{f^{\bigstar}M}=$} &

\includegraphics[width=30mm]{Module2CategoriesPictures/moduleslemmas/pullbackmodulelambda.pdf}.

\end{tabular}
\end{center}

Given a right $B$-module 1-morphism $g:M\rightarrow N$, we let $f^{\bigstar}g$ be the right $A$-module 1-morphism given by the 1-morphism $g$, and the 2-isomorphism

\settoheight{\algebra}{\includegraphics[width=30mm]{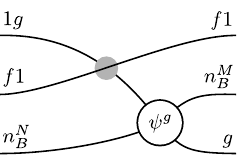}}

\begin{center}
\begin{tabular}{@{}cc@{}}

\raisebox{0.45\algebra}{$\psi^{f^{\bigstar}g}=$} &

\includegraphics[width=30mm]{Module2CategoriesPictures/moduleslemmas/pullback1morphism.pdf}.

\end{tabular}
\end{center}

Given a right $B$-module 2-morphism $\alpha:g\Rightarrow h$, it is not hard to check that $\alpha$ defines a right $A$-module 2-morphism $g_A\Rightarrow h_A$, so we set $f^{\bigstar}\alpha := \alpha$.

Finally, using the definitions above, it is easy to check that $f^{\bigstar}$ is a strict 2-functor.
\end{proof}

\begin{Remark}
The 2-functor constructed in lemmas \ref{lem:1morphismalgebras2functor} can be upgraded to a right $\mathfrak{C}$-module 2-functor. As we shall not need this fact, we leave the details to the interested reader. Further, we note that, as expected, a 2-morphisms of algebras yield a 2-natural transformation between the corresponding 2-functors.
\end{Remark}

Now, fix an algebraically closed field $\mathds{k}$ of characteristic zero. We derive some properties of the 2-category of modules over an algebra in a multifusion 2-category.

\begin{Proposition}\label{prop:categoryofmodulesproperties}
Let $\mathfrak{C}$ be a (multi)fusion 2-category over $\mathds{k}$ and $A$ an algebra in $\mathfrak{C}$, then the $\mathds{k}$-linear 2-category $\mathbf{Mod}_{\mathfrak{C}}(A)$ has the following properties:
\begin{enumerate}
    \item Its $Hom$-categories are Cauchy complete.
    \item Give two parallel right $A$-module 1-morphisms $f$ and $g$, $Hom_A(f,g)$ is a finite dimensional $\mathds{k}$-vector space.
    \item The 2-category $\mathbf{Mod}_{\mathfrak{C}}(A)$ is Cauchy complete.
\end{enumerate}
\end{Proposition}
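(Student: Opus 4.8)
The plan is to transport the relevant completeness properties from $\mathfrak{C}$ itself --- which, being multifusion, is finite semisimple, hence has Cauchy complete $Hom$-categories with finite dimensional $Hom$-spaces and is Cauchy complete as a $2$-category --- to $\mathbf{Mod}_{\mathfrak{C}}(A)$ along the forgetful $2$-functor $U:\mathbf{Mod}_{\mathfrak{C}}(A)\rightarrow\mathfrak{C}$. By the coherence theorem for monoidal $2$-categories I would first reduce to the case where $\mathfrak{C}$ is strict cubical, so that $\mathbf{Mod}_{\mathfrak{C}}(A)$ is a strict $2$-category. The guiding principle is that $U$ is $\mathds{k}$-linear and faithful on $2$-morphisms, that the module-structure data ($n^M$, $\nu^M$, $\rho^M$, $\psi^f$) are natural in their inputs, and that the completeness operations in $\mathfrak{C}$ are compatible with $\circ$ and with the $\mathds{k}$-bilinear product $\Box$. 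Point $2$ is then immediate: for parallel right $A$-module maps $f,g$, the equation of Definition \ref{def:moduleintertwiner} cutting out $Hom_A(f,g)$ inside $Hom_{\mathfrak{C}}(f,g)$ is linear in the $2$-morphism, so $Hom_A(f,g)$ is a linear subspace of the finite dimensional space $Hom_{\mathfrak{C}}(f,g)$, hence finite dimensional.

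For point $1$, I would show that $Hom_A(M,N)$ inherits direct sums and splittings of idempotents from the finite semisimple category $Hom_{\mathfrak{C}}(M,N)$. A direct sum of module maps $f,g:M\rightarrow N$ is $f\oplus g$ formed in $Hom_{\mathfrak{C}}(M,N)$: since $(-)\Box A$ is linear in its first variable there is a canonical identification $(f\oplus g)\Box A\cong (f\Box A)\oplus (g\Box A)$, and $\psi^{f\oplus g}:=\psi^f\oplus\psi^g$ exhibits $f\oplus g$ as a module map for which the evident inclusions and projections are module $2$-morphisms; the axioms of Definition \ref{def:modulemap} hold summand-wise, and the zero $1$-morphism similarly underlies a zero module map. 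For an idempotent module $2$-morphism $e:f\Rightarrow f$, split $e$ in $Hom_{\mathfrak{C}}(M,N)$ as $e=\iota\cdot\pi$ with $\pi\cdot\iota=Id_{f_0}$ for some $f_0:M\rightarrow N$ in $\mathfrak{C}$; conjugating $\psi^f$ with $\iota$ and $\pi$, and using that $e$ is a module $2$-morphism to check invertibility and the two coherence axioms, endows $f_0$ with a right $A$-module structure for which $\iota$ and $\pi$ are module $2$-morphisms, so that $e$ splits in $Hom_A(M,N)$.

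For point $3$, it remains to produce direct sums of objects and splittings of $2$-condensation monads (see \cite{DR}, or \cite{D1} for the definition of the Cauchy completion of a $2$-category) inside $\mathbf{Mod}_{\mathfrak{C}}(A)$. Given right $A$-modules $M,N$, the direct sum $M\oplus N$ taken in $\mathfrak{C}$ carries the action $n^{M\oplus N}:=n^M\oplus n^N$ under the identification $(M\oplus N)\Box A\cong (M\Box A)\oplus(N\Box A)$, with $\nu$ and $\rho$ defined summand-wise, and this is a direct sum in $\mathbf{Mod}_{\mathfrak{C}}(A)$ thanks to point $1$. Given a $2$-condensation monad on a right $A$-module $M$ whose underlying $1$- and $2$-morphisms are module data, $U$ carries it to a $2$-condensation monad in $\mathfrak{C}$, which splits onto some object $M_0$ of $\mathfrak{C}$ since $\mathfrak{C}$ is Cauchy complete; transporting $n^M$ and the structure $2$-isomorphisms of $M$ along the splitting $1$-morphisms $M_0\rightleftarrows M$ equips $M_0$ with a right $A$-module structure for which the splitting takes place in $\mathbf{Mod}_{\mathfrak{C}}(A)$. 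Conceptually this last step reflects the fact that the $2$-functor $(-)\Box A$ preserves $2$-condensations --- these are absolute in the appropriate sense --- so that pseudo-algebras over the pseudo-monad $(-)\Box A$ inherit Cauchy completeness from $\mathfrak{C}$ (cf.\ Remark \ref{rem:pseudomonads} and \cite{CMV}).

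I expect the main obstacle to be the bookkeeping in point $3$: writing down the transported right $A$-module structure on the $2$-condensate $M_0$ explicitly and checking the associativity and unitality axioms of Definition \ref{def:module}, which is a moderately long string-diagram computation, and matching conventions with the precise notion of a $2$-condensation monad (and its splitting) used in \cite{DR}. The analogous but shorter verifications for the transported $\psi$'s in points $1$ and $3$ should be routine given linearity and naturality.
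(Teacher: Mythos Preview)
Your proposal is correct and follows essentially the same approach as the paper's own proof: point 2 via the inclusion $Hom_A(f,g)\subseteq Hom_{\mathfrak{C}}(f,g)$, point 1 by lifting direct sums and idempotent splittings along the forgetful functor on $Hom$-categories, and point 3 by taking direct sums componentwise and splitting a 2-condensation monad in $\mathfrak{C}$ first, then transporting the module structure to the condensate via $n^N:=f\circ n^M\circ(g\Box A)$. Your anticipation that the main work is the string-diagram bookkeeping for the transported module structure in point 3 matches exactly where the paper spends its effort.
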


\begin{proof}
The first point follows from the fact that direct sums and splittings of idempotents are preserved by all linear functors, and that the $Hom$-categories of $\mathfrak{C}$ are Cauchy complete by definition. We leave the details to the reader.

We now prove 2. Let $M,N$ be two right $A$-modules, and $f,g:M\rightarrow N$ be two right $A$-module 1-morphisms. We have already seen that $Hom_A(f,g)$ is a $\mathds{k}$-vector space. Further, by hypothesis, we know that $Hom_{\mathfrak{C}}(f,g)$ is finite dimensional vector space. But $Hom_A(f,g)\subseteq Hom_{\mathfrak{C}}(f,g)$, thus $Hom_{A}(f,g)$ is a finite dimensional vector space.

The existence of direct sums in $\mathbf{Mod}_{\mathfrak{C}}(A)$ follows immediately from their existence in $\mathfrak{C}$. So, to prove 3, it only remains to show that every 2-condensation monad in $\mathbf{Mod}_{\mathfrak{C}}(A)$ splits (see \cite{D1} for a precise definition). Without loss of generality, we assume that $\mathfrak{C}$ is strict cubical. Let $(M,e,\xi,\delta)$ be an arbitrary 2-condensation in $\mathbf{Mod}_{\mathfrak{C}}(A)$. Forgetting the right $A$-module structure, we get a 2-condensation in $\mathfrak{C}$, which we also denote by $(M,e,\xi,\delta)$. By hypothesis, $\mathfrak{C}$ is Cauchy complete, so there exists a 2-condensation $(M,N,f,g,\phi,\gamma)$, and a 2-isomorphism $\theta:g\circ f\cong e$ splitting $(M,e,\xi,\delta)$, i.e. satisfying $$\xi = \theta\cdot (g\circ \phi\circ f)\cdot(\theta^{-1} \circ \theta^{-1}),$$
$$\delta = (\theta \circ \theta)\cdot (g\circ \gamma\circ f)\cdot\theta^{-1}.$$ We claim that $(M,N,f,g,\phi,\gamma)$ and $\theta$ can be upgraded to a splitting of the 2-condensation monad $(M,e,\xi,\delta)$ in $\mathbf{Mod}_{\mathfrak{C}}(A)$. Firstly, we endow $N$ with the structure of a right $A$-module by setting $n^N:= f\circ n^M\circ (g\Box A)$, and the 2-isomorphisms

\settoheight{\algebra}{\includegraphics[width=75mm]{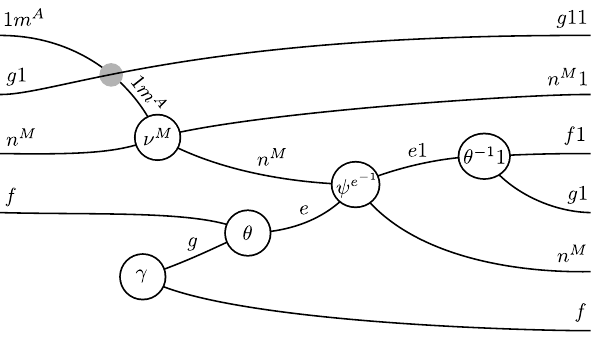}}

\begin{center}
\begin{tabular}{@{}cc@{}}

\raisebox{0.45\algebra}{$\nu^{N}=$} &

\includegraphics[width=75mm]{Module2CategoriesPictures/condensation/nuN.pdf},

\end{tabular}

\settoheight{\algebra}{\includegraphics[width=30mm]{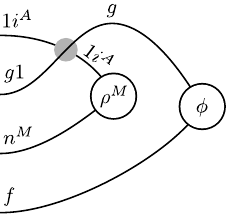}}

\begin{tabular}{@{}cc@{}}

\raisebox{0.45\algebra}{$\rho^{N}=$} &

\includegraphics[width=30mm]{Module2CategoriesPictures/condensation/rhoN.pdf}.

\end{tabular}
\end{center}

Now, we need to show how to upgrade $f$ and $g$ to right $A$-module 1-morphisms. This is achieved by making the following assignments:

\settoheight{\algebra}{\includegraphics[width=45mm]{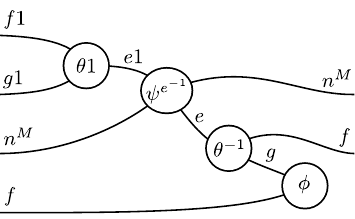}}

\begin{center}
\begin{tabular}{@{}cc@{}}

\raisebox{0.45\algebra}{$\psi^f=$} &

\includegraphics[width=45mm]{Module2CategoriesPictures/condensation/psif.pdf},

\end{tabular}

\settoheight{\algebra}{\includegraphics[width=45mm]{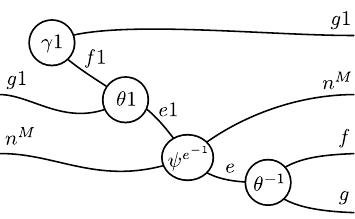}}

\begin{tabular}{@{}cc@{}}

\raisebox{0.45\algebra}{$\psi^g=$} &

\includegraphics[width=45mm]{Module2CategoriesPictures/condensation/psig.pdf}.

\end{tabular}
\end{center}

One can check that the pair consisting of $f$ and $\psi^f$ , and that consisting of $g$ and $\psi^g$ are indeed a right $A$-module 1-morphism. Finally, using the fact that $e$ is a right $A$-module 1-morphism together with the two equations satisfied by $\theta$, it is not difficult to show that the 2-morphisms $\phi$, $\gamma$, and $\theta$ are right $A$-module 2-morphisms. This upgrades $(M,N,f,g,\phi,\gamma)$ to a 2-condensation in $\mathbf{Mod}_{\mathfrak{C}}(A)$, which splits $(M,e,\xi,\delta)$ via $\theta$.
\end{proof}

\begin{Remark}
It would be interesting to characterise those 2-categories that can be obtained as the 2-category of modules over an algebra in $\mathbf{2Vect}$. More generally, it would be interesting to give a definition of ``finite'' 2-category. One would expect that the 2-category of exact module categories over a finite tensor category (see \cite{EGNO} for the definition) to be an example of such a ``finite'' 2-category.
\end{Remark}

\begin{Remark}
Proposition \ref{prop:categoryofmodulesproperties} also holds in the more general context of compact semisimple tensor 2-categories introduced in \cite{D5}. Namely, if $\mathds{k}$ is any field, $\mathfrak{C}$ is a compact semisimple tensor 2-category, and $A$ is an algebra in $\mathfrak{C}$, then one finds that $\mathbf{Mod}_{\mathfrak{C}}(A)$ satisfies all three properties given in the statement of proposition \ref{prop:categoryofmodulesproperties} by following the proof supplied above essentially verbatim.
\end{Remark}

\section{Enrichment of Finite Semisimple Module 2-Categories}\label{sec:enrichment}

\subsection{Definition}

Throughout this section, unless otherwise specified, we work over an algebraically closed field $\mathds{k}$ of characteristic zero, $\mathfrak{C}$ is a multifusion 2-category over $\mathds{k}$, and $\mathfrak{M}$ is a finite semisimple left $\mathfrak{C}$-module 2-category. Further, we write $\mathfrak{C}^{1op}$, and $\mathfrak{M}^{1op}$ for the 2-category obtained from $\mathfrak{C}$, and $\mathfrak{M}$ be reversing the direction of the 1-morphisms.

\begin{Proposition}\label{prop:adjointmodule}
There is a linear 2-functor $\underline{Hom}_{\mathfrak{M}}(-,-):\mathfrak{M}^{1op}\times \mathfrak{M}\rightarrow \mathfrak{C}$, such that there is an adjoint 2-natural equivalence $$t:Hom_{\mathfrak{M}}(C\Diamond M,N)\simeq Hom_{\mathfrak{C}}(C,\underline{Hom}_{\mathfrak{M}}(M,N))$$ for every $M,N$ in $\mathfrak{M}$ and $C$ in $\mathfrak{C}$.
\end{Proposition}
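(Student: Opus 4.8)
The plan is, for each fixed object $M$ of $\mathfrak{M}$, to realize $\underline{Hom}_{\mathfrak{M}}(M,-)$ as a right 2-adjoint to the action 2-functor $(-)\Diamond M\colon\mathfrak{C}\to\mathfrak{M}$, and then to assemble these adjoints, as $M$ varies, into a 2-functor of two variables. For the existence of the adjoint, fix $M$ and $N$ in $\mathfrak{M}$ and consider the assignment $C\mapsto Hom_{\mathfrak{M}}(C\Diamond M,N)$: it is a linear 2-functor $\mathfrak{C}^{1op}\to\mathbf{2Vect}$, being the composite of the linear 2-functor $(-)\Diamond M\colon\mathfrak{C}\to\mathfrak{M}$ with the linear 2-functor $Hom_{\mathfrak{M}}(-,N)\colon\mathfrak{M}^{1op}\to\mathbf{2Vect}$. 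Since $\mathfrak{M}$ is finite semisimple, both of these preserve the relevant (co)limits --- direct sums and splittings of 2-condensations --- hence so does the composite. Invoking the theory of finite semisimple 2-categories (\cite{DR}, \cite{D1}) --- concretely, the description of $\mathfrak{C}$ via the multifusion categories of endomorphisms of its simple objects together with its Cauchy completeness --- every such linear 2-functor $\mathfrak{C}^{1op}\to\mathbf{2Vect}$ is representable. Letting $\underline{Hom}_{\mathfrak{M}}(M,N)$ be a representing object yields an equivalence of categories $t_{C,M,N}\colon Hom_{\mathfrak{M}}(C\Diamond M,N)\simeq Hom_{\mathfrak{C}}(C,\underline{Hom}_{\mathfrak{M}}(M,N))$, 2-natural in $C$ and in $N$ by construction.

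Next I would upgrade $\underline{Hom}_{\mathfrak{M}}$ to a 2-functor $\mathfrak{M}^{1op}\times\mathfrak{M}\to\mathfrak{C}$ using the mates calculus. A 1-morphism $f\colon M\to M'$ in $\mathfrak{M}$ produces, by whiskering, a 2-natural transformation $(-)\Diamond f\colon (-)\Diamond M\Rightarrow (-)\Diamond M'$; its mate with respect to the 2-adjunctions $(-)\Diamond M\dashv\underline{Hom}_{\mathfrak{M}}(M,-)$ and $(-)\Diamond M'\dashv\underline{Hom}_{\mathfrak{M}}(M',-)$ is a 2-natural transformation $\underline{Hom}_{\mathfrak{M}}(M',-)\Rightarrow\underline{Hom}_{\mathfrak{M}}(M,-)$, hence in particular a 1-morphism $\underline{Hom}_{\mathfrak{M}}(M',N)\to\underline{Hom}_{\mathfrak{M}}(M,N)$ of $\mathfrak{C}$, contravariant in $M$ exactly as the source 2-category $\mathfrak{M}^{1op}$ requires; a 2-morphism in $\mathfrak{M}$ likewise induces, via mates, a 2-morphism of $\mathfrak{C}$. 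The coherence 2-isomorphisms making this into a 2-functor, the 2-functor axioms, and the joint 2-naturality of $t$ in all three arguments all follow from the functoriality and coherence of the mates calculus together with the 2-functor axioms for $\Diamond$; since adjoints are unique up to canonical equivalence, the whole package $\underline{Hom}_{\mathfrak{M}}(-,-)$ is pinned down by $t$ and the coherence data is forced. Linearity of $\underline{Hom}_{\mathfrak{M}}(-,-)$ is then automatic, right adjoints of $\mathds{k}$-linear 2-functors being $\mathds{k}$-linear.

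The main obstacle is the existence assertion at the end of the first step --- equivalently, the existence of the right 2-adjoint to $(-)\Diamond M$. This is the single place where finite semisimplicity of $\mathfrak{M}$ (and of $\mathfrak{C}$) is genuinely used; for a general $\mathfrak{C}$-module 2-category no such internal $Hom$ need exist, and pinning down the representing object concretely (say, through the classification of finite semisimple 2-categories and of the 2-functors between them) takes some care. The assembly step of the second paragraph, although conceptually routine, is where the bulk of the work lies; in practice it is most cleanly handled by translating the mate constructions into string diagrams and leaning on uniqueness of adjoints to suppress the coherence bookkeeping.
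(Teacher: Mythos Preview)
Your proposal is correct and rests on the same key input as the paper: the representability of every linear 2-functor $\mathfrak{C}^{1op}\to\mathbf{2Vect}$, i.e.\ the fact that the Yoneda embedding $\mathfrak{C}\to\mathbf{Fun}(\mathfrak{C}^{1op},\mathbf{2Vect})$ is an equivalence for a finite semisimple 2-category (proposition~1.4.1 of \cite{DR}). The difference is purely in the packaging. You work pointwise---fix $M$ and $N$, pick a representing object, then assemble the results into a 2-functor of two variables via the mates calculus---and you correctly identify this assembly as the place where most of the bookkeeping hides. The paper bypasses that assembly entirely: it observes that $(M,N)\mapsto Hom_{\mathfrak{M}}((-)\Diamond M,N)$ is already a linear 2-functor $F\colon\mathfrak{M}^{1op}\times\mathfrak{M}\to\mathbf{Fun}(\mathfrak{C}^{1op},\mathbf{2Vect})$, and then simply sets $\underline{Hom}_{\mathfrak{M}}(-,-):=G^*\circ F$ where $G^*$ is a chosen pseudo-inverse to the Yoneda equivalence $G$. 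This delivers 2-functoriality in both variables for free, with the 2-natural equivalence $t$ falling out of the Yoneda lemma. Your route is conceptually the same and perfectly viable, but the paper's global formulation is tidier and avoids the mates machinery you anticipated needing.
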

\begin{proof}
Let us consider the linear 2-functor $$\begin{tabular}{cccc}$F:$ &$\mathfrak{M}^{op}\times \mathfrak{M}$&$\rightarrow$ &$\mathbf{Fun}(\mathfrak{C}^{1op},\mathbf{2Vect}).$\\  &$(M,N)$&$\mapsto$ &$Hom_{\mathfrak{M}}((-)\Diamond M,N)$\end{tabular}$$
By proposition 1.4.1 of \cite{DR}, the assignment $$\begin{tabular}{cccc}$G:$&$\mathfrak{C}$&$\simeq$&$ \mathbf{Fun}(\mathfrak{C}^{1op},\mathbf{2Vect})$\\ &$C$&$\mapsto$&$ Hom_{\mathfrak{C}}(-,C)$\end{tabular}$$ is an equivalence of 2-categories, and thus $G$ admits a pseudo-inverse, which we denote by $G^*$. We write $$\underline{Hom}_{\mathfrak{M}}(-,-):=G^*\circ F:\mathfrak{M}^{op}\times \mathfrak{M}\rightarrow \mathfrak{C}$$ for the composite 2-functor. Now, using the Yoneda lemma for linear 2-categories, we obtain a 2-natural equivalence of categories
$$\begin{tabular}{rcl} $Hom_{\mathfrak{C}}(C, \underline{Hom}_{\mathfrak{M}}(M,N))$ &$\simeq$& $Hom_{\mathbf{Fun}(\mathfrak{C}^{1op},\mathbf{2Vect})}(G(C), F(M,N))$\\ &$\simeq$& $Hom_{\mathfrak{M}}(C\Diamond M, N),$\end{tabular}$$ for every $C$ in $\mathfrak{C}$, and $M,N$ in $\mathfrak{M}$. As every 2-natural equivalence can be upgraded to an adjoint 2-natural equivalence, the proof of the result is completed by performing said upgrade.
\end{proof}

\begin{Example}
Every finite semisimple 2-category $\mathfrak{N}$ is a left $\mathbf{2Vect}$-module category. In this case, we have $\underline{Hom}_{\mathfrak{N}}(M,N)= Hom_{\mathfrak{N}}(M,N)$ for every $M,N$ in $\mathfrak{N}$.
\end{Example}

\begin{Example}
The 2-category $\mathfrak{C}$ is canonically a left $\mathfrak{C}$-module 2-category. Using lemma 1.2.8 of \cite{D2}, we find that for every $A,B$ in $\mathfrak{C}$, we have: $$\underline{Hom}_{\mathfrak{C}}(A,B)\simeq B\Box (^{\sharp}A).$$
\end{Example}

\begin{Notation}
If there is no risk of ambiguity, we will write $\underline{Hom}(M,N)=\underline{Hom}_{\mathfrak{M}}(M,N)$ for every $M,N$ in $\mathfrak{M}$. In string diagrams, we will abbreviate the notation further by using $\underline{(M,N)}$ to denote $\underline{Hom}(M,N)$. In addition, when considering a finite semisimple 2-category $\mathfrak{N}$ as a finite semisimple left $\mathbf{2Vect}$-module 2-category, we write $Hom(P,Q)=\underline{Hom}_{\mathfrak{N}}(P,Q)$ for every $P,Q$ in $\mathfrak{N}$. In string diagrams, we will write $(P,Q)$ to denote $Hom(P,Q)$.
\end{Notation}

\begin{Remark}\label{rem:biadjunction}
Let us fix an object $M$ of $\mathfrak{M}$. Proposition \ref{prop:adjointmodule} implies that $(-)\Diamond M$ is a left 2-adjoint to $\underline{Hom}(M,-)$, meaning that there exists a unit 2-natural transformations $u_M$, given on  $C$ in $\mathfrak{C}$ by $$u_{M,C}:C\rightarrow \underline{Hom}(M,C\Diamond M),$$ a counit 2-natural transformation $c_M$, given on $N$ in $\mathfrak{M}$ by $$c_{M,N}:\underline{Hom}(M,N)\Diamond M\rightarrow N,$$ and invertible modifications $\Phi_M$ and $\Psi_M$, given on $C$ in $\mathfrak{M}$ and $N$ in $\mathfrak{M}$ by $$\Phi_{M,C}:(c_{M,C\Diamond M})\circ (u_{M,C}\Diamond Id_M)\cong Id_{C\Diamond M},$$ $$\Psi_{M,N}:\underline{Hom}(M,c_{M,N})\circ(u_{M,\underline{Hom}(M,N)})\cong Id_{\underline{Hom}(M,N)}.$$ The equivalence between these two definitions follows in the usual fashion from the Yoneda lemma for linear 2-categories. In particular, the value of the 2-natural equivalence $t$ of proposition \ref{prop:adjointmodule} on $f:C\Diamond M\rightarrow N$ is recovered by $$t(f)= \underline{Hom}(M,f)\circ u_{M,C}.$$ Analogously, the value of its pseudo-inverse $t^*$ on $g:C\rightarrow \underline{Hom}(M,N)$ is recovered by $$t^*(g)=c_{M,N}\circ (g\Diamond M).$$

For our purposes, it is important that this 2-adjunction be coherent, i.e. that $\Phi_M$ and $\Psi_M$ satisfy the swallowtail equations depicted in section \ref{sec:prelim}. Thanks to the multi-object version of the coherence of duals in monoidal 2-categories derived in \cite{Pstr}, this is always possible. For any $N$ in $\mathfrak{M}$, the first swallowtail equation corresponds to the following equality in $Hom_{\mathfrak{M}}(\underline{Hom}(M,N)\Diamond M,N)$:

\newlength{\adj}
\settoheight{\adj}{\includegraphics[width=37.5mm]{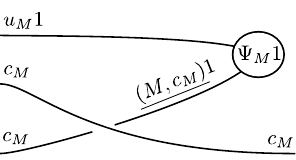}}

\begin{equation}\label{eqn:swallowtail1}
\begin{tabular}{@{}ccc@{}}
\includegraphics[width=37.5mm]{Module2CategoriesPictures/biadjunction/coherence1.pdf} & \raisebox{0.45\adj}{$=$} &

\includegraphics[width=30mm]{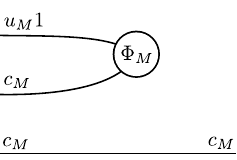}.
\end{tabular}
\end{equation}

As for the second swallowtail equation, for any $C$ in $\mathfrak{C}$, it corresponds to the equality in $Hom_{\mathfrak{C}}(C,\underline{Hom}(M,C\Diamond M))$ depicted below:

\begin{equation}\label{eqn:swallowtail2}
\begin{tabular}{@{}ccc@{}}
\includegraphics[width=45mm]{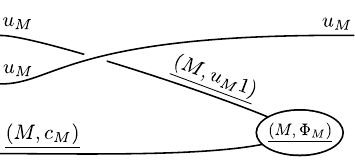} & \raisebox{0.45\adj}{$=$} &

\includegraphics[width=30mm]{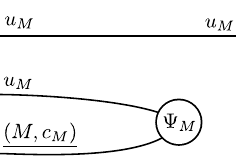}.
\end{tabular}
\end{equation}
\end{Remark}

As an application of the above remark, let us prove the following lemma.

\begin{Lemma}\label{lem:enrichmentidentification}
For every $M,N$ in $\mathfrak{M}$ and $C$ in $\mathfrak{C}$, the 1-morphism \begin{align*}
\chi^M_{C,N}: &\  C\Box \underline{Hom}(M,N)\xrightarrow{u_M} \underline{Hom}(M,(C\Box \underline{Hom}(M,N))\Diamond M)\xrightarrow{\underline{Hom}(M,\alpha^{\mathfrak{M}})}\\ 
&\  \underline{Hom}(M,C\Diamond (\underline{Hom}(M,N)\Diamond M))\xrightarrow{\underline{Hom}(M,1\Diamond c_M)}\underline{Hom}(M,C\Diamond N)
\end{align*} is an equivalence.
\end{Lemma}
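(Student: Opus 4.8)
The plan is to prove the statement by passing to Hom-categories: it suffices to show that for every object $D$ of $\mathfrak{C}$ the functor $Hom_{\mathfrak{C}}(D,\chi^M_{C,N})$ given by post-composition with $\chi^M_{C,N}$ is an equivalence of categories. This implies the claim either by the Yoneda lemma for linear 2-categories used in the proof of Proposition \ref{prop:adjointmodule}, or directly: taking $D$ to be the codomain of $\chi^M_{C,N}$ produces a 1-morphism $\psi$ with $\chi^M_{C,N}\circ\psi\cong Id$, and taking $D$ to be its domain and using full faithfulness gives $\psi\circ\chi^M_{C,N}\cong Id$. By Proposition \ref{prop:coherence} we may moreover assume throughout that $(\mathfrak{C},\mathfrak{M})$ is strict cubical.

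First I would unwind the definition. Set $g:=(1_C\Diamond c_{M,N})\circ\alpha^{\mathfrak{M}}_{C,\underline{Hom}(M,N),M}$, so that, up to the coherence data of the 2-functor $\underline{Hom}(M,-)$, we have $\chi^M_{C,N}=\underline{Hom}(M,g)\circ u_{M,C\Box\underline{Hom}(M,N)}$. For a 1-morphism $h\colon D\to C\Box\underline{Hom}(M,N)$, the 2-naturality of the unit $u_M$ (Remark \ref{rem:biadjunction}) gives $u_{M,C\Box\underline{Hom}(M,N)}\circ h\cong\underline{Hom}(M,h\Diamond M)\circ u_{M,D}$, whence $\chi^M_{C,N}\circ h\cong\underline{Hom}\big(M,\,g\circ(h\Diamond M)\big)\circ u_{M,D}=t\big(g\circ(h\Diamond M)\big)$ by the description of $t$ in terms of $u_M$ in Remark \ref{rem:biadjunction}. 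Therefore $Hom_{\mathfrak{C}}(D,\chi^M_{C,N})$ is naturally isomorphic to $t\circ\Lambda_D$, where $\Lambda_D\colon Hom_{\mathfrak{C}}(D,C\Box\underline{Hom}(M,N))\to Hom_{\mathfrak{M}}(D\Diamond M,C\Diamond N)$ sends $h$ to $(1_C\Diamond c_{M,N})\circ\alpha^{\mathfrak{M}}_{C,\underline{Hom}(M,N),M}\circ(h\Diamond M)$. Since $t$ is an equivalence (Proposition \ref{prop:adjointmodule}), it remains to prove that $\Lambda_D$ is an equivalence.

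For this I would use duality. As $\mathfrak{C}$ is multifusion, $C$ admits a left dual $^{\sharp}C$, so Lemma \ref{lem:adjunctionduals} applies both to the module 2-category $\mathfrak{M}$ and to $\mathfrak{C}$ viewed as a module over itself; combining these with $t$ and $\alpha^{\mathfrak{M}}$ produces a chain of equivalences of categories
\begin{align*}
Hom_{\mathfrak{C}}(D,C\Box\underline{Hom}(M,N))&\;\xrightarrow{\ \sim\ }\;Hom_{\mathfrak{C}}({}^{\sharp}C\Box D,\underline{Hom}(M,N))\\
&\;\xrightarrow{\ \sim\ }\;Hom_{\mathfrak{M}}(({}^{\sharp}C\Box D)\Diamond M,N)\\
&\;\xrightarrow{\ \sim\ }\;Hom_{\mathfrak{M}}({}^{\sharp}C\Diamond(D\Diamond M),N)\\
&\;\xrightarrow{\ \sim\ }\;Hom_{\mathfrak{M}}(D\Diamond M,C\Diamond N),
\end{align*}
in which the first arrow is the pseudo-inverse of the equivalence $b$ of Lemma \ref{lem:adjunctionduals} for $\mathfrak{C}$ over itself, the second is the pseudo-inverse of $t$, the third is pre-composition with (a pseudo-inverse of) $\alpha^{\mathfrak{M}}$, and the last is $b$ for $\mathfrak{M}$. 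Call this composite $\Theta_D$; it is an equivalence.

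The heart of the matter — and the step I expect to require the most work — is to identify $\Lambda_D$ with $\Theta_D$ up to natural isomorphism. I would do this by expanding both functors on a 1-morphism $h\colon D\to C\Box\underline{Hom}(M,N)$ using the explicit components: $t$ and its pseudo-inverse in terms of the unit $u_M$ and counit $c_M$ (Remark \ref{rem:biadjunction}), and $b$ and its pseudo-inverse in terms of a coherent dual pair $({}^{\sharp}C,C,i_C,e_C,\dots)$ (proof of Lemma \ref{lem:adjunctionduals}). Both expressions carry the common left factor $1_C\Diamond c_{M,N}$, so after whiskering it off one is left to reduce the string diagram $(1_C\Diamond((e_C\Box\underline{Hom}(M,N))\Diamond M))\circ(1_C\Diamond(({}^{\sharp}C\Box h)\Diamond M))\circ(1_C\Diamond(\alpha^{\mathfrak{M}})^{\bullet})\circ(i_C\Diamond(D\Diamond M))$ to $\alpha^{\mathfrak{M}}_{C,\underline{Hom}(M,N),M}\circ(h\Diamond M)$; this is done by moving $h$ past the $\alpha^{\mathfrak{M}}$'s and past $i_C\Diamond(-)$ using the naturality of $\alpha^{\mathfrak{M}}$ and of $i_C\Diamond(-)$, straightening the zig-zag formed by $i_C$ and $e_C$ via the triangle identities for the dual pair, and then collapsing the remaining associativity 1-morphisms using the coherence axioms of Definition \ref{def:module2catunpacked}. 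Once $\Lambda_D\cong\Theta_D$ is established, $\Lambda_D$ is an equivalence, hence so is $Hom_{\mathfrak{C}}(D,\chi^M_{C,N})\cong t\circ\Lambda_D$ for every $D$, and therefore $\chi^M_{C,N}$ is an equivalence.
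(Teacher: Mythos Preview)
Your proposal is correct and follows essentially the same approach as the paper: both argue via Yoneda by showing that post-composition with $\chi^M_{C,N}$ agrees, up to natural isomorphism, with the composite equivalence $t\circ b\circ t^*\circ b^*$ built from Proposition~\ref{prop:adjointmodule} and Lemma~\ref{lem:adjunctionduals}. The paper packages this as a single commuting square filled by an explicit 2-isomorphism $\theta^M_{A,C,N}$, whereas you first factor $Hom_{\mathfrak{C}}(D,\chi^M_{C,N})\cong t\circ\Lambda_D$ and then identify $\Lambda_D$ with the remaining composite; the string-diagram verification you outline (triangle identity for the dual pair plus naturality of $\alpha^{\mathfrak{M}}$) is exactly what the paper's $\theta^M_{A,C,N}$ encodes.
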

\begin{proof}
Without loss of generality, we may assume that the pair $(\mathfrak{C},\mathfrak{M})$ is strict cubical. Recall the adjoint 2-natural equivalence $b$ constructed in lemma \ref{lem:adjunctionduals}. We claim that for every $A$ in $\mathfrak{C}$, there is a natural isomorphism $\theta^M_{A,C,N}$ witnessing the commutativity of the following diagram:
$$\adjustbox{scale=0.88}{\begin{tikzcd}[sep=tiny]
{Hom_{\mathfrak{C}}(A,C\Box \underline{Hom}(M,N))} \arrow[rrrr, "{Hom_{\mathfrak{C}}(A,\chi^M_{C,N})}"] \arrow[dd, "b^*"'] &  & {} \arrow[dd, Rightarrow, "\theta^M_{A,C,N}"] &  & {Hom_{\mathfrak{C}}(A,\underline{Hom}(M,C\Diamond N))} \\
 &  &  &  &   \\
{Hom_{\mathfrak{C}}(^{\sharp}C\Box A, \underline{Hom}(M,N))} \arrow[rr, "t^*"']  &  & {Hom_{\mathfrak{C}}((^{\sharp}C\Box A)\Diamond M, N)} \arrow[rr, "b"'] &  & {Hom_{\mathfrak{C}}(A\Diamond M, C\Diamond N)}. \arrow[uu, "t"']
\end{tikzcd}}$$

For any given 1-morphism $f:A\rightarrow C\Box\underline{Hom}(M,N)$ in $\mathfrak{C}$, we let $(\theta^M_{A,C,N})_f$ be the 2-isomorphism in $Hom_{\mathfrak{C}}(A, \underline{Hom}(M,C\Diamond N))$ defined by

\settoheight{\adj}{\includegraphics[width=90mm]{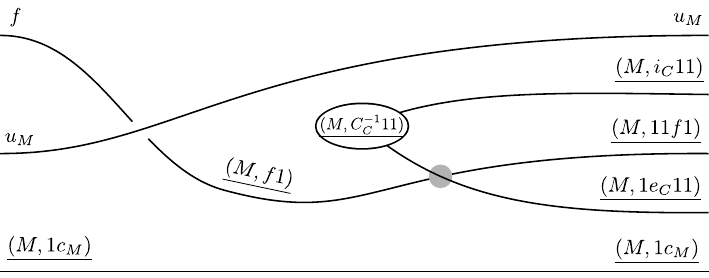}}

\begin{center}
\begin{tabular}{@{}cc@{}}
\raisebox{0.45\adj}{$(\theta^M_{A,C,N})_f:=$} &
\includegraphics[width=90mm]{Module2CategoriesPictures/enrichmentfunctor/chiequivalence.pdf}.
\end{tabular}
\end{center}

It is clear from the definition of $(\theta^M_{A,C,N})_f$ that these 2-isomorphisms assemble to give a natural isomorphism $\theta^M_{A,C,N}$. Further inspection of the construction of $\theta^M_{A,C,N}$ shows that the natural isomorphisms $\theta^M_{A,C,N}$ for varying $A$ assemble into an invertible modification. Consequently, the Yoneda lemma for 2-categories implies that the 1-morphism $\chi^M_{C,N}$ is an equivalence because the bottom composite of the above diagram is an equivalence.
\end{proof}

\begin{Proposition}\label{prop:enrichedhommod}
For any fixed $M$ in $\mathfrak{M}$, the 2-functor $$\underline{Hom}_{\mathfrak{M}}(M,-):\mathfrak{M}\rightarrow \mathfrak{C}$$ is a left $\mathfrak{C}$-module 2-functor.
\end{Proposition}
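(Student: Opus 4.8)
The plan is to realize $\underline{Hom}_{\mathfrak{M}}(M,-)$ as the right 2-adjoint of a left $\mathfrak{C}$-module 2-functor and to transport the module structure across the 2-adjunction by taking mates. Observe first that $(-)\Diamond M:\mathfrak{C}\rightarrow\mathfrak{M}$ is itself canonically a left $\mathfrak{C}$-module 2-functor in the sense of Definition \ref{def:module2fununpacked}: its structure 2-natural equivalence on $A,C$ in $\mathfrak{C}$ is (the chosen pseudo-inverse of) the associator $\alpha^{\mathfrak{M}}_{A,C,M}\colon(A\Box C)\Diamond M\rightarrow A\Diamond(C\Diamond M)$, while the invertible modifications $\omega$ and $\gamma$ are assembled from $\pi^{\mathfrak{M}}$, $\lambda^{\mathfrak{M}}$, $\mu^{\mathfrak{M}}$; the three axioms of Definition \ref{def:module2fununpacked} for this data are then literally axioms a, b, c of Definition \ref{def:module2catunpacked} for the action $\Diamond$. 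By Remark \ref{rem:biadjunction}, $(-)\Diamond M$ admits $\underline{Hom}_{\mathfrak{M}}(M,-)$ as a coherent right 2-adjoint, with unit $u_M$, counit $c_M$, and swallowtail identities \eqref{eqn:swallowtail1}--\eqref{eqn:swallowtail2}.

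Next I would produce the data of Definition \ref{def:module2fununpacked} for $\underline{Hom}_{\mathfrak{M}}(M,-)$ by taking mates. The structure 1-morphism $\chi^{\underline{Hom}(M,-)}_{C,N}$ is exactly the mate of $\alpha^{\mathfrak{M}}$ under the 2-adjunction, which is precisely the 1-morphism $\chi^M_{C,N}$ of Lemma \ref{lem:enrichmentidentification}; by that lemma it is an equivalence, and 2-naturality in $C$ and $N$ is inherited from the 2-naturality of $u_M$, $c_M$, and $\alpha^{\mathfrak{M}}$. Since every 2-natural equivalence upgrades to an adjoint one, this supplies the adjoint 2-natural equivalence required in point (1). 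The invertible modifications $\omega^{\underline{Hom}(M,-)}$ and $\gamma^{\underline{Hom}(M,-)}$ are then defined as the mates of $\omega$ and $\gamma$ for $(-)\Diamond M$; concretely they are obtained by pasting $u_M$, $c_M$, the swallowtail 2-isomorphisms, and the modifications $\pi^{\mathfrak{M}}$, $\mu^{\mathfrak{M}}$ (respectively $\lambda^{\mathfrak{M}}$).

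It then remains to check axioms a, b, and c of Definition \ref{def:module2fununpacked} for this package. Each is an equality of 2-morphisms in a $Hom$-category of $\mathfrak{C}$; using the 2-adjunction (equivalently, Lemma \ref{lem:enrichmentidentification} together with the Yoneda lemma of \cite{DR}) one reduces it to the corresponding equality for the mate, that is, to the corresponding module-2-functor axiom for $(-)\Diamond M$, which holds because it is a module-category axiom of Definition \ref{def:module2catunpacked}. The main obstacle is the bookkeeping in this reduction: the pentagon-type axiom a must be drawn as a string diagram and simplified, using the swallowtail equations \eqref{eqn:swallowtail1}--\eqref{eqn:swallowtail2} and their stability under application of a 2-functor (Lemma \ref{lem:biadj2functor}), the interchangers and the naturality 2-isomorphisms, and finally axiom a of Definition \ref{def:module2catunpacked}; this is a long manipulation that I would carry out in the appendix. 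Axioms b and c are shorter variants of the same computation, with $\mu^{\mathfrak{M}}$ and $\lambda^{\mathfrak{M}}$ in place of $\pi^{\mathfrak{M}}$.
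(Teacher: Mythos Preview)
Your proposal is correct and is essentially the same argument as the paper's, packaged in the language of mates/doctrinal adjunction. The paper also takes $\chi^M_{C,N}$ to be the mate of $\alpha^{\mathfrak{M}}$ under the coherent 2-adjunction of Remark~\ref{rem:biadjunction} (this is exactly the composite in Lemma~\ref{lem:enrichmentidentification}), sets $\gamma^M_N=\Psi_{M,N}^{-1}$, and defines $\omega^M$ by an explicit pasting of $u_M$, $c_M$, $\Phi_M$ and naturality 2-isomorphisms, then verifies axioms a, b, c by string-diagram manipulation using the swallowtail equations and Lemma~\ref{lem:biadj2functor}.

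One small point worth flagging: since the paper passes to the strict cubical setting (Proposition~\ref{prop:coherence}), the module-2-functor axioms for $(-)\Diamond M$ that you reduce to are tautologies (all of $\alpha^{\mathfrak{M}},\pi^{\mathfrak{M}},\lambda^{\mathfrak{M}},\mu^{\mathfrak{M}}$ are identities). Thus the entire content of the verification lives in the ``bookkeeping'' of the mate reduction itself, i.e.\ in the functoriality of mate-taking with respect to pasting and its interaction with the swallowtail data. You correctly identify this as the main obstacle; in practice it is exactly the sequence of string-diagram moves the paper records in Appendix~\ref{sec:diagcoherenceomega}. Your conceptual framing via doctrinal adjunction is a clean way to see \emph{why} those diagrams must commute, while the paper's approach gives the explicit witnesses needed later (e.g.\ in Lemma~\ref{lem:pre2ostrik}, where $\psi^{\chi^M}$ must be written down concretely).
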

\begin{proof}
Without loss of generality, we may assume that the pair $(\mathfrak{C},\mathfrak{M})$ is strict cubical. For varying $C$ in $\mathfrak{C}$ and $N$ in $\mathfrak{M}$, the 1-morphisms $\chi^M_{C,N}$ of lemma \ref{lem:enrichmentidentification} assemble to give a 2-natural equivalence. Picking an adjoint, this provides us with the desired adjoint 2-natural equivalence.

We still have to specify two invertible modifications $\omega^M$ and $\gamma^M$. Given $N$ in $\mathfrak{M}$, we set $\gamma^M_N := \Psi_{M,N}$. Given $A,B$ in $\mathfrak{C}$, and $N$ in $\mathfrak{M}$, we let $\omega^M_{A,B,N}$ be the 2-isomorphism in $Hom_{\mathfrak{C}}(A\Box B\Box \underline{Hom}(M,N), \underline{Hom}(M, A\Diamond (B\Diamond M)))$ depicted below:

\settoheight{\adj}{\includegraphics[width=67.5mm]{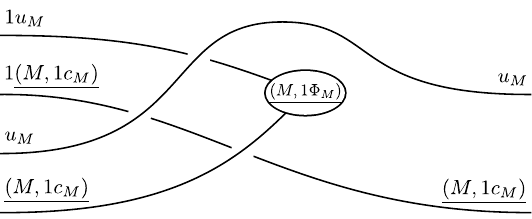}}

\begin{center}
\begin{tabular}{@{}cc@{}}
\raisebox{0.45\adj}{$\omega^M_{A,B,N}:=$} &
\includegraphics[width=67.5mm]{Module2CategoriesPictures/enrichmentfunctor/omega.pdf}.
\end{tabular}
\end{center}

We now have to check the coherence axioms of definition \ref{def:module2fununpacked}. The diagrams we will use to do this are given in appendix \ref{sec:diagcoherenceomega}. We begin by checking axioms b and c. Axiom b is equivalent to the assertion that the string diagram depicted in figure \ref{fig:omegaidentity} is the identity 2-morphism on $\chi^M_{A,N}$. Using naturality, we can pass the coupon labelled $1\Psi_M^{-1}$ underneath the strand labelled $u_M$. Then, using equation (\ref{eqn:swallowtail1}), we get the desired equality. Similarly, one can use naturality and equation (\ref{eqn:swallowtail2}) to prove that the diagram depicted in figure \ref{fig:omegaidentityc} is the identity 2-morphism on $\chi^M_{B,N}$, which proves that axiom c holds.

We go on to show that axiom a is satisfied. Figure \ref{fig:enrichedhomassociativity1} corresponds to the right hand-side of axiom a in definition \ref{def:module2fununpacked}. In order to pass to figure \ref{fig:enrichedhomassociativity2}, we move the strand labelled $u_M$ up using naturality. Using naturality again, we can move the coupon labelled $\underline{(M,1\underline{(M,1\Phi_M)}1)}$ under the one labelled $\underline{(M,1\Phi_M)}$, this gets us to figure \ref{fig:enrichedhomassociativity3}. Now, using naturality to move the coupon labelled $\underline{(M,1\Phi_M)}$ down, we arrive at figure \ref{fig:enrichedhomassociativity4}. Finally, using an isotopy, we end up with figure \ref{fig:enrichedhomassociativity5}, which corresponds to the left hand-side of axiom a in definition \ref{def:module2fununpacked}.
\end{proof}

\begin{Remark}\label{rem:EnrichmentGeneral}
Let us now momentarily assume that $\mathds{k}$ is an arbitrary field, $\mathfrak{C}$ is a compact semisimple tensor 2-category, and $\mathfrak{M}$ is a compact semisimple left $\mathfrak{C}$-module 2-category. Proposition \ref{prop:adjointmodule} does not hold at this level of generality, as can be seen from remark 3.2.2 of \cite{D5}. Nonetheless, if we assume that $\mathds{k}$ is perfect, and that $\mathfrak{C}$ is locally separable, then proposition \ref{prop:adjointmodule} does hold. Namely, with these additional hypotheses, we may appeal to theorem 3.2.1 of \cite{D5} in lieu of proposition 1.4.1 of \cite{DR} in the proof of proposition \ref{prop:adjointmodule}. Further, this is the only change required to make the proof hold. The other results in this subsection follow using the proofs we have given above up to the obvious minor modifications.
\end{Remark}

\subsection{Proof}

We begin by recalling definition 3.1 of \cite{GS}.

\begin{Definition} \label{def:enriched2cat}
A 2-category enriched over $\mathfrak{C}$, $\underline{\mathfrak{B}}$, consists of the following data:
\begin{enumerate}
\item A set of objects $Ob(\underline{\mathfrak{B}})$;
\item For every objects $A,B$ in $\underline{\mathfrak{B}}$, a $Hom$-object $Hom_{\underline{\mathfrak{B}}}(A,B)$ in $\mathfrak{C}$;
\item For every object $A$ in $\underline{\mathfrak{B}}$, a 1-morphism $j_A\rightarrow Hom_{\underline{\mathfrak{B}}}(A,A)$ in $\mathfrak{C}$;
\item For every objects $A,B,C$ in $\underline{\mathfrak{B}}$, a 1-morphism in $\mathfrak{C}$ $$m_{A,B,C}:Hom_{\underline{\mathfrak{B}}}(B,C)\Box Hom_{\underline{\mathfrak{B}}}(A,B)\rightarrow Hom_{\underline{\mathfrak{B}}}(A,C);$$
\item For every $A,B$ in $\underline{\mathfrak{B}}$, 2-isomorphisms $\sigma^{\underline{\mathfrak{B}}}_{A,B}$ and $\tau^{\underline{\mathfrak{B}}}_{A,B}$ in $\mathfrak{C}$
\end{enumerate}

$$\begin{tikzcd}[sep=tiny]
{Hom_{\underline{\mathfrak{B}}}(B,B)Hom_{\underline{\mathfrak{B}}}(A,B)}  \arrow[rrr, "m"]\arrow[rrrddd,Rightarrow, "\sigma^{\underline{\mathfrak{B}}}", shorten <= 4ex, shorten >= 4ex] &  & & {Hom_{\underline{\mathfrak{B}}}(A,B)}\\
&  &  &\\
& & &\\
{I\, Hom_{\underline{\mathfrak{B}}}(A,B)} \arrow[uuu, "j 1"]\arrow[rrr,equal] & & & {I\, Hom_{\underline{\mathfrak{B}}}(A,B)}\arrow[uuu, "l"'],
\end{tikzcd}$$

$$\begin{tikzcd}[sep=tiny]
{Hom_{\underline{\mathfrak{B}}}(A,B)Hom_{\underline{\mathfrak{B}}}(A,A)}  \arrow[rrr, "m"] &  & {}\arrow[ddd, Rightarrow,"\tau^{\underline{\mathfrak{B}}}\ "', shorten > = 2ex, shorten < = 1ex] & {Hom_{\underline{\mathfrak{B}}}(A,B)}\arrow[ddd, "r"]\\
&  &  &\\
& & &\\
{Hom_{\underline{\mathfrak{B}}}(A,B)\, I} \arrow[uuu, "1j"]\arrow[rrr,equal] & & {} & {Hom_{\underline{\mathfrak{B}}}(A,B)\, I};
\end{tikzcd}$$

\begin{enumerate}
    \item[6.] For every $A,B,C, D$ in $\underline{\mathfrak{B}}$, a 2-isomorphism $\pi^{\underline{\mathfrak{B}}}_{A,B,C,D}$ in $\mathfrak{C}$
\end{enumerate}

$$\begin{tikzcd}[column sep=tiny]
{{(Hom_{\underline{\mathfrak{B}}}(C,D)Hom_{\underline{\mathfrak{B}}}(B,C))Hom_{\underline{\mathfrak{B}}}(A,B)}} \arrow[d, "\alpha"'] \arrow[rrr, "m1"] &  &  & {{Hom_{\underline{\mathfrak{B}}}(C,D)Hom_{\underline{\mathfrak{B}}}(A,C)}} \arrow[dd, "m"] \\
{Hom_{\underline{\mathfrak{B}}}(C,D)(Hom_{\underline{\mathfrak{B}}}(B,C)Hom_{\underline{\mathfrak{B}}}(A,B))} \arrow[d, "m"'] \arrow[rrr, Rightarrow, "\pi^{\underline{\mathfrak{B}}}", shorten <= 6ex, shorten >= 6ex]              &  &  & {}                                                                                         \\
{Hom_{\underline{\mathfrak{B}}}(B,D) Hom_{\underline{\mathfrak{B}}}(A,B)} \arrow[rrr, "m"']                                                            &  &  & {Hom_{\underline{\mathfrak{B}}}(A,D);}                                                    
\end{tikzcd}$$

Subject to the following relations:
\begin{enumerate}
    \item [a.] For every $A,B,C,D,E$ in $\underline{\mathfrak{B}}$, we have
\end{enumerate}

\newlength{\enrichment}
\settoheight{\enrichment}{\includegraphics[width=52.5mm]{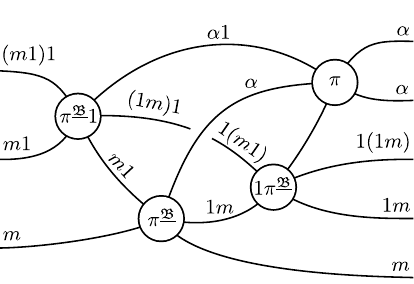}}

\begin{center}
\begin{tabular}{@{}ccc@{}}

\includegraphics[width=52.5mm]{Module2CategoriesPictures/enrichmentcat/enrichedassociator1.pdf} & \raisebox{0.45\enrichment}{$=$} &

\includegraphics[width=46.5mm]{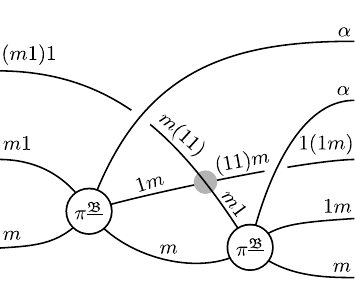}

\end{tabular}
\end{center}

\begin{enumerate}
    \item[] in $Hom_{\mathfrak{C}}(((\underline{\mathfrak{B}}(D,E)\Box\underline{\mathfrak{B}}(C,D))\Box\underline{\mathfrak{B}}(B,C))\Box\underline{\mathfrak{B}}(A,B),\underline{\mathfrak{B}}(A,E))$,

    \item [b.] For every every $A,B,C$ in $\underline{\mathfrak{B}}$, we have
    
\settoheight{\enrichment}{\includegraphics[width=45mm]{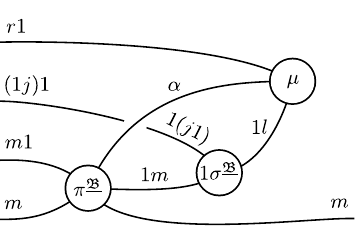}}

\begin{center}
\begin{tabular}{@{}ccc@{}}

\includegraphics[width=45mm]{Module2CategoriesPictures/enrichmentcat/enrichedcatunit1.pdf} & \raisebox{0.45\enrichment}{$=$} &

\includegraphics[width=30mm]{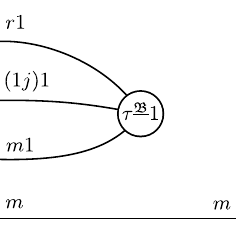}

\end{tabular}
\end{center}

in $Hom_{\mathfrak{C}}(\underline{\mathfrak{B}}(B,C)\Box\underline{\mathfrak{B}}(A,B), \underline{\mathfrak{B}}(A,C))$.
\end{enumerate}
\end{Definition}

\begin{Theorem}\label{thm:enrichmentmodule}
The 2-category $\mathfrak{M}$ is canonically enriched over $\mathfrak{C}$.
\end{Theorem}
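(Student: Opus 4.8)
The plan is to exhibit directly the data and axioms of Definition~\ref{def:enriched2cat}, setting $Ob(\underline{\mathfrak{M}}):=Ob(\mathfrak{M})$ and $Hom_{\underline{\mathfrak{M}}}(M,N):=\underline{Hom}_{\mathfrak{M}}(M,N)$, and feeding in the coherent 2-adjunction $(-)\Diamond M\dashv\underline{Hom}(M,-)$ of Remark~\ref{rem:biadjunction} together with the left $\mathfrak{C}$-module 2-functor structure $(\chi^M,\omega^M,\gamma^M)$ on $\underline{Hom}(M,-)$ from Proposition~\ref{prop:enrichedhommod}. By Proposition~\ref{prop:coherence} we may assume $(\mathfrak{C},\mathfrak{M})$ is strict cubical, which trivialises $l^{\mathfrak{M}}$ and $\alpha^{\mathfrak{M}}$ and thereby simplifies the string diagrams below. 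No genuinely new input is needed beyond Propositions~\ref{prop:adjointmodule} and~\ref{prop:enrichedhommod}: the work lies in organising the coherence, and the construction involves no arbitrary choices (only the coherent choices already built into the adjunction), which is what ``canonically'' refers to.

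First I would record the structure 1-morphisms. The unit $j_M\colon I\to\underline{Hom}(M,M)$ is the transpose $t(l^{\mathfrak{M}}_M)=\underline{Hom}(M,l^{\mathfrak{M}}_M)\circ u_{M,I}$ of the module unitor, so $j_M=u_{M,I}$ once we are strict cubical. The composition 1-morphism is
$$m_{M,N,P}:=\underline{Hom}(M,c_{N,P})\circ\chi^M_{\underline{Hom}(N,P),N}\colon\ \underline{Hom}(N,P)\Box\underline{Hom}(M,N)\longrightarrow\underline{Hom}(M,P);$$
unwinding the formula for $\chi^M$ from Lemma~\ref{lem:enrichmentidentification} shows that this is exactly the transpose under $t$ of the evident composite $(\underline{Hom}(N,P)\Box\underline{Hom}(M,N))\Diamond M\xrightarrow{\alpha^{\mathfrak{M}}}\underline{Hom}(N,P)\Diamond(\underline{Hom}(M,N)\Diamond M)\xrightarrow{1\Diamond c_{M,N}}\underline{Hom}(N,P)\Diamond N\xrightarrow{c_{N,P}}P$, i.e.\ ``composition of internal homs'', categorifying the corresponding step in Ostrik's argument.

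Next I would build the coherence 2-isomorphisms. The enriched associator $\pi^{\underline{\mathfrak{M}}}_{M,N,P,Q}$ is assembled from $\omega^M$ (the associativity modification of the module 2-functor $\underline{Hom}(M,-)$), the naturality 2-isomorphisms of the counit $c$ and unit $u$, and the functoriality data of $\underline{Hom}(M,-)$. The unitors $\sigma_{M,N}$ and $\tau_{M,N}$ come from the triangle-identity modifications $\Phi$ (applied at the relevant objects), from $\gamma^M=\Psi_M^{-1}$, and from naturality of $c$ and $\chi^M$; here one must invoke the swallowtail equations (\ref{eqn:swallowtail1}) and (\ref{eqn:swallowtail2}) to see that these candidate 2-isomorphisms genuinely have the source and target prescribed by Definition~\ref{def:enriched2cat}, exactly as in the proof of Proposition~\ref{prop:enrichedhommod}.

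Finally I would verify axioms a and b. The useful simplification is that the ambient $Hom$-categories in both axioms have the form $Hom_{\mathfrak{C}}(X,\underline{Hom}(M,-))$ with $M$ the common source object, so by Proposition~\ref{prop:adjointmodule} it suffices to check each identity after applying the equivalence $t^{*}=c_{M,-}\circ((-)\Diamond M)$; this converts each enriched axiom into an identity of 2-morphisms in $\mathfrak{M}$, which then follows by string-diagram manipulation from the left $\mathfrak{C}$-module 2-category axioms of $\mathfrak{M}$ (Definition~\ref{def:module2catunpacked}), the module 2-functor axioms for $\underline{Hom}(M,-)$ (Definition~\ref{def:module2fununpacked}), the swallowtails, and naturality. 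I expect the main obstacle to be axiom a, the enriched pentagon for $\pi^{\underline{\mathfrak{M}}}$: verifying it is a long diagram chase — one slides the strands labelled $u_M$ past the $\Phi_M$- and $\omega^M$-coupons and applies (\ref{eqn:swallowtail1}), in the same spirit as the coherence checks in Proposition~\ref{prop:enrichedhommod} — which I would carry out in the appendix. Axiom b is the shorter verification, using $\Phi$, $\gamma^M$, and the swallowtail identities.
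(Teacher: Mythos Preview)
Your proposal is correct and follows essentially the same approach as the paper: both build the enrichment directly from the coherent 2-adjunction data $(u_M,c_M,\Phi_M,\Psi_M)$ of Remark~\ref{rem:biadjunction}, define $j_M=u_{M,I}$ and $m_{M,N,P}$ as the transpose of the iterated counit, and verify the axioms by string-diagram manipulations driven by naturality and the swallowtail equations. The only cosmetic difference is that you package $m$ and $\pi^{\underline{\mathfrak{M}}}$ through the module 2-functor data $(\chi^M,\omega^M)$ of Proposition~\ref{prop:enrichedhommod}, whereas the paper writes everything directly in terms of $u_M,c_M,\Phi_M,\Psi_M$ (see the explicit formula for $m_{M,N,P}$ and Figure~\ref{fig:enrichedpentagonator}); since $\chi^M$ and $\omega^M$ are themselves defined from those data, this is a repackaging rather than a new idea, and in practice the pentagon check forces you to unwind them anyway because one leg involves $\chi^N$ rather than $\chi^M$. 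Your suggestion to transport the axioms along $t^*$ into $\mathfrak{M}$ is exactly what the paper's proof does implicitly when it moves the $u_M$ strand above all coupons as its first step in each verification.
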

\begin{proof}
Without loss of generality, we may assume that the pair $(\mathfrak{C},\mathfrak{M})$ is strict cubical. In order to avoid confusion, we will denote the $\mathfrak{C}$-enriched 2-category we will construct by $\underline{\mathfrak{M}}$. The object of $\underline{\mathfrak{M}}$ are taken to be the objects of $\mathfrak{M}$. Given objects $M,N$ of $\mathfrak{M}$, we define $$Hom_{\underline{\mathfrak{M}}}(M,N):=\underline{Hom}(M,N).$$ For every $M$ in $\mathfrak{M}$, we set $j_M:=u_{M,I}$ in $\mathfrak{C}$. For every $M,N,P$ in $\mathfrak{M}$, we let $m_{M,N,P}$ be given by the following composite 1.morphism in $\mathfrak{C}$: $$\adjustbox{scale=0.93}{$m_{M,N,P}:= \underline{Hom}(M,c_{P,N})\circ \underline{Hom}(M,\underline{Hom}(N,P)\Box c_{M,N})\circ u_{M,\underline{Hom}(N,P)\Box \underline{Hom}(M,N)}$}.$$

Using the invertible modifications $\Phi$ and $\Psi$ of remark \ref{rem:biadjunction}, we can define the two invertible modifications $\sigma$ and $\tau$. More precisely, for every $M,N$ in $\mathfrak{M}$, we let $\sigma^{\underline{\mathfrak{M}}}_{M,N}$ and $\tau^{\underline{\mathfrak{M}}}_{M,N}$ be the 2-isomorphisms in $Hom_{\mathfrak{C}}(\underline{Hom}(M,N), \underline{Hom}(M,N))$ given by:

\settoheight{\enrichment}{\includegraphics[width=52.5mm]{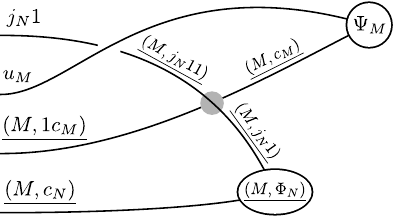}}

\begin{center}
\begin{tabular}{@{}cc@{}}

\raisebox{0.4\enrichment}{$\sigma^{\underline{\mathfrak{M}}}_{M,N}:=$} &

\includegraphics[width=46.5mm]{Module2CategoriesPictures/enrichmentcat/unitor2.pdf},

\end{tabular}
\end{center}

\begin{center}
\begin{tabular}{@{}cc@{}}

\raisebox{0.45\enrichment}{$\tau^{\underline{\mathfrak{M}}}_{M,N}:=$} &

\includegraphics[width=46.5mm]{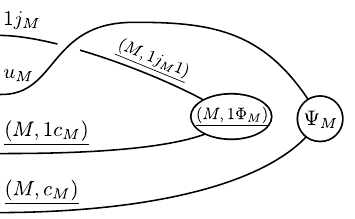}.

\end{tabular}
\end{center}

For every $M,N,P,Q$ in $\mathfrak{M}$, the invertible modification $\pi^{\underline{\mathfrak{M}}}_{M,N,P,Q}$ is the 2-isomorphism in $Hom_{\mathfrak{C}}(\underline{Hom}(P,Q)\Box \underline{Hom}(N,P)\Box \underline{Hom}(M,N),\underline{Hom}(M,Q))$ given by the diagram depicted in figure \ref{fig:enrichedpentagonator}.

Finally, we have to check that the two axioms of definition \ref{def:enriched2cat} are satisfied. In order to do this, we use the string diagrams depicted in appendix \ref{sec:diagcoherenceenriched}. We begin by checking axiom b. Given $M,N,P$ in $\mathfrak{M}$, figure \ref{fig:enrichedcatunitality1} depicts the left hand-side of this equation between 2-isomorphisms in $$Hom_{\mathfrak{C}}(\underline{Hom}(N,P)\Box \underline{Hom}(M,N), \underline{Hom}(M,P)).$$

\begin{landscape}
    \vspace*{\fill}
    \begin{figure}[!hbt]
    \centering
    \includegraphics[width=150mm]{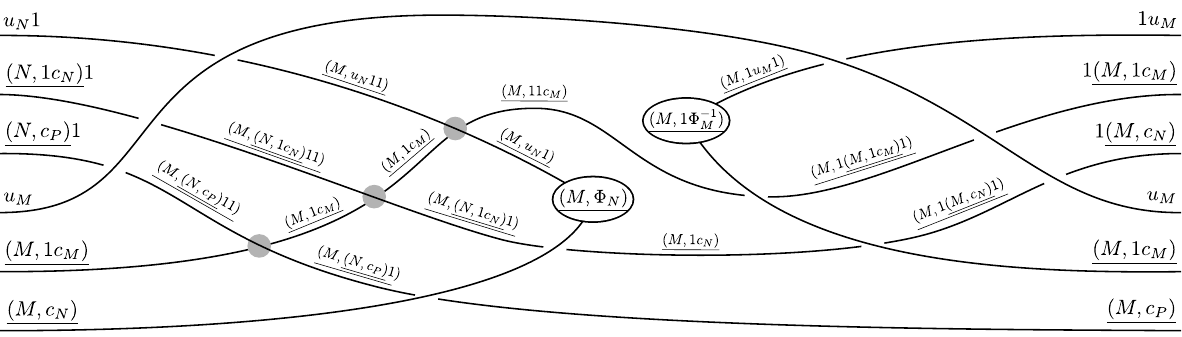}
    \caption{The enriched pentagonator}
    \label{fig:enrichedpentagonator}
    \end{figure}
    \vfill
\end{landscape}

\noindent As a first step, we use naturality to move the string labelled $u_M$ above all the coupons, which gets us to figure \ref{fig:enrichedcatunitality2}. Moving the coupon labelled $\underline{(M,1\underline{(M,\Phi_N)}1)}$ to the left, we obtain figure \ref{fig:enrichedcatunitality3}. Figure \ref{fig:enrichedcatunitality4} is obtained by moving the coupon labelled $\underline{(M,1\Phi_N)}$ to the left, and using equation (\ref{eqn:swallowtail1}) combined with lemma \ref{lem:biadj2functor} on the green coupons. Applying equation (\ref{eqn:swallowtail1}) with lemma \ref{lem:biadj2functor} to the blue coupon, we get figure \ref{fig:enrichedcatunitality5}. Finally, we arrive at figure \ref{fig:enrichedcatunitality6} by moving the strings labelled $u_M$ and $\underline{(M,1c_M)}$ down. As this last figure depicts the right hand-side of the axiom b, we find that axiom b holds.

Let us now check that axiom a of definition \ref{def:enriched2cat} holds. Given objects $M,N,P,Q,R$ in $\mathfrak{M}$, figure \ref{fig:enrichedcatassociativity1} depicts the left hand-side of this equation between 2-isomorphisms in $$Hom_{\mathfrak{C}}(\underline{Hom}(Q,R)\Box\underline{Hom}(P,Q)\Box\underline{Hom}(N,P)\Box \underline{Hom}(M,N), \underline{Hom}(M,R)).$$ Using naturality, we can move the string labelled $u_M$ on top of all the coupons, which gets us to figure \ref{fig:enrichedcatassociativity2}. To arrive at figure \ref{fig:enrichedcatassociativity3}, we move the coupons labelled $\underline{(M,\underline{(M,\Phi_N)}1)}$ and $\underline{(M,\underline{(M,1\Phi_M^{-1})}1)}$, and the string inbetween upwards. By naturality, we are allowed to bring the coupon labelled $\underline{(M,\underline{(N,1\Phi_N^{-1})11})}$ down, getting us to figure \ref{fig:enrichedcatassociativity4}. Cancelling the two blue coupons, we find ourselves contemplating figure \ref{fig:enrichedcatassociativity5}. Figure \ref{fig:enrichedcatassociativity6} is derived by moving the coupon labelled $\underline{(M,\underline{(N,\Phi_P)11})}$ to the right, and the one labelled $\underline{(M,1\underline{(M,1\Phi_M^{-1})11})}$ to the left. We can then use naturality to bring the string labelled $u_P11$ up, and the coupon labelled $\underline{(M,1\Phi_M^{-1})}$ down, arriving at figure \ref{fig:enrichedcatassociativity7}. Then, we proceed to bring the coupon labelled $\underline{(M,\Phi_N)}$ together with the string immediately underneath to the bottom left of the diagram, which gets us to figure \ref{fig:enrichedcatassociativity8}. Figure \ref{fig:enrichedcatassociativity9} is a mere isotopy of the previous one. Finally, figure \ref{fig:enrichedcatassociativity10} is obtained by first bringing the coupon labelled $\underline{(M,\Phi_M^{-1})}$ left and down, and then moving the string labelled $u_M$ down. This last figure is exactly the right hand-side of the equation we wished to check, so we are done.
\end{proof}

\begin{Remark}
The linear monoidal 2-functor $\mathbf{2Vect}\rightarrow \mathfrak{C}$ induces a 3-functor from the 3-category of 2-categories enriched over $\mathfrak{C}$ to the 3-category of 2-categories enriched over $\mathbf{2Vect}$. Applying this 3-functor to $\underline{\mathfrak{M}}$ gives back $\mathfrak{M}$ up to equivalence.
\end{Remark}

\begin{Corollary}\label{cor:endalgebra}
Let $M$ be any object of $\mathfrak{M}$. The object $\underline{End}(M)$ of $\mathfrak{C}$ has a canonical algebra structure.
\end{Corollary}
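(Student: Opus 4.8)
The plan is to read this off directly from Theorem \ref{thm:enrichmentmodule}. That theorem equips $\mathfrak{M}$ with a canonical $\mathfrak{C}$-enriched structure $\underline{\mathfrak{M}}$, and for any monoidal 2-category $\mathfrak{C}$ the endomorphism object of an object in a $\mathfrak{C}$-enriched 2-category carries a canonical algebra structure — this is precisely the categorification of the elementary fact that, in a category enriched over a monoidal category, $\mathrm{End}(x)$ is a monoid. So all that is needed is to make this identification explicit, which by the coherence result (Proposition \ref{prop:coherence}) we may do assuming $(\mathfrak{C},\mathfrak{M})$ is strict cubical.

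Concretely, I would set $A := Hom_{\underline{\mathfrak{M}}}(M,M) = \underline{End}(M)$, take the multiplication $m\colon A\Box A\to A$ to be $m_{M,M,M}$, and the unit $i\colon I\to A$ to be $j_M = u_{M,I}$. The three structure 2-isomorphisms required in Definition \ref{def:algebra} are then supplied by the enriched data of Definition \ref{def:enriched2cat}: the associator $\mu^A$ is $\pi^{\underline{\mathfrak{M}}}_{M,M,M,M}$, and the unitors $\lambda^A$ and $\rho^A$ are $\sigma_{M,M}$ and $\tau_{M,M}$ respectively (up to inversion, to match the orientation conventions of the two definitions). One checks that the sources and targets of these 2-isomorphisms are exactly those demanded of $\mu^A$, $\lambda^A$, $\rho^A$, the coherence 1-morphisms $\alpha$, $l$, $r$ of $\mathfrak{C}$ appearing on both sides agreeing on the nose.

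It then remains to verify axioms a and b of Definition \ref{def:algebra}. But these are exactly the specialisations of axioms a and b of Definition \ref{def:enriched2cat} to the case where every object $A,B,C,D,E$ of $\underline{\mathfrak{M}}$ is taken to be $M$: the enriched associativity pentagon with five copies of $M$ becomes the algebra associativity axiom, and the enriched unitality triangle with three copies of $M$ becomes the algebra unitality axiom. Hence no additional string-diagram manipulation is needed beyond that already carried out in the proof of Theorem \ref{thm:enrichmentmodule}.

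There is no real obstacle here: the only thing to be careful about is the bookkeeping involved in matching the pasting-diagram conventions of Definitions \ref{def:algebra} and \ref{def:enriched2cat}, which is a matter of inspection since the enriched axioms were set up precisely to categorify the notion of a monoid object. In a more leisurely write-up one could also record that the analogous argument identifies $\underline{Hom}(M,N)$, for $N$ in $\mathfrak{M}$, as a right $\underline{End}(M)$-module via the 1-morphism $m_{M,M,N}$, but for the present statement this is not required.
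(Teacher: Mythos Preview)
Your proposal is correct and follows exactly the same route as the paper: both assume strict cubicality, set $i=j_M$, $m=m_{M,M,M}$, take $\mu^A$, $\lambda^A$, $\rho^A$ to be the specialisations of $\pi^{\underline{\mathfrak{M}}}$, $\sigma$, $\tau$ at $M$ (with the appropriate inversions you flag), and then observe that the algebra axioms are the enriched axioms with all objects set to $M$. The paper's own proof is essentially the two-line version of what you wrote.
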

\begin{proof}
For simplicity, we assume that the pair $(\mathfrak{C},\mathfrak{M})$ is strict cubical. Using the notations of the proof of theorem \ref{thm:enrichmentmodule}, we take $$i:=j_M:I\rightarrow \underline{End}(M),\ \ m:= m_{M,M,M}:\underline{End}(M)\Box\underline{End}(M)\rightarrow \underline{End}(M),$$ $\mu:=\alpha^{-1}_{M,M,M,M}$, $\lambda:=\sigma^{-1}_{M,M}$, and $\rho:=\tau_{M,M}$. The coherence axioms of definition \ref{def:algebra} follow readily from the coherence axioms of an enriched 2-category.
\end{proof}

The following lemma will play a key role in the proof of our main theorem below.

\begin{Lemma}\label{lem:pre2ostrik}
Let $M$ be an object of $\mathfrak{M}$. Then, $\underline{Hom}(M,-)$ defines a linear 2-functor $$\mathfrak{M}\rightarrow \mathbf{Mod}_{\mathfrak{C}}(\underline{End}(M)).$$ Further, this 2-functor is a left $\mathfrak{C}$-module 2-functor.
\end{Lemma}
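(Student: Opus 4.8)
The plan is to first lift $\underline{Hom}_{\mathfrak{M}}(M,-)$ to a linear $2$-functor valued in $\mathbf{Mod}_{\mathfrak{C}}(\underline{End}(M))$ using the enriched composition operation built in Theorem~\ref{thm:enrichmentmodule}, and then to descend the left $\mathfrak{C}$-module structure already produced by Proposition~\ref{prop:enrichedhommod} along the forgetful $2$-functor $U\colon\mathbf{Mod}_{\mathfrak{C}}(\underline{End}(M))\to\mathfrak{C}$. Here $U$ is a strict left $\mathfrak{C}$-module $2$-functor, since by Proposition~\ref{prop:catmodmodcat} the $\mathfrak{C}$-action on $\mathbf{Mod}_{\mathfrak{C}}(\underline{End}(M))$ is $C\Diamond(-)=C\Box(-)$ with the module structure carried along, and $U$ is moreover faithful on $2$-morphisms. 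Throughout I would, via Proposition~\ref{prop:coherence}, assume $(\mathfrak{C},\mathfrak{M})$ strict cubical and keep the notation of Theorem~\ref{thm:enrichmentmodule}, so that $\underline{End}(M)=\underline{Hom}(M,M)$ with multiplication $m_{M,M,M}$ and unit $j_M=u_{M,I}$.

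For the lift: on an object $N$ of $\mathfrak{M}$ I equip $\underline{Hom}(M,N)$ with the right $\underline{End}(M)$-action $m_{M,M,N}\colon\underline{Hom}(M,N)\Box\underline{End}(M)\to\underline{Hom}(M,N)$, taking the structure $2$-isomorphisms $\nu$ and $\rho$ of Definition~\ref{def:module} to be the appropriate instances of the enriched associator $\pi^{\underline{\mathfrak{M}}}$ and unitor $\tau$ of $\underline{\mathfrak{M}}$; the module axioms are then instances of axioms a and b of Definition~\ref{def:enriched2cat} at the objects $(M,M,M,M,N)$ and $(M,M,N)$ respectively. On a $1$-morphism $f\colon N\to P$ the underlying $1$-morphism of the module map is $\underline{Hom}(M,f)$, and the coherence $2$-isomorphism $\psi^{\underline{Hom}(M,f)}\colon m_{M,M,P}\circ(\underline{Hom}(M,f)\Box 1)\cong\underline{Hom}(M,f)\circ m_{M,M,N}$ is assembled from the $2$-naturality of the unit $u_M$, the $2$-functoriality of $\underline{Hom}(M,-)$, and the $2$-naturality of the counit $c_M$, after unwinding $m$ into its defining composite of $u_M$, an application of $\underline{Hom}(M,-)$ to an action of $\mathfrak{C}$ on $c_M$, and a further application of $\underline{Hom}(M,-)$ to $c_M$. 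On a $2$-morphism $\gamma$ one checks that $\underline{Hom}(M,\gamma)$ satisfies the equation of Definition~\ref{def:moduleintertwiner}, again by naturality. The $2$-functor structure isomorphisms of $\underline{Hom}(M,-)\colon\mathfrak{M}\to\mathbf{Mod}_{\mathfrak{C}}(\underline{End}(M))$ are those of $\underline{Hom}_{\mathfrak{M}}(M,-)\colon\mathfrak{M}\to\mathfrak{C}$, which one verifies are module $2$-morphisms; the only nonformal coherence, compatibility with composition of $1$-morphisms, reduces to the pentagon axiom a of Definition~\ref{def:enriched2cat}.

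For the $\mathfrak{C}$-module structure: since $U$ is strict left $\mathfrak{C}$-module and faithful on $2$-morphisms, it suffices to verify that the data $\chi^M$, $\omega^M$, $\gamma^M$ of Proposition~\ref{prop:enrichedhommod} are, respectively, right $\underline{End}(M)$-module $1$-morphisms and module $2$-morphisms; the axioms of Definition~\ref{def:module2fununpacked} then transfer automatically. Concretely I must produce an invertible module-map structure $\psi^{\chi^M_{C,N}}\colon m_{M,M,C\Diamond N}\circ(\chi^M_{C,N}\Box 1)\cong\chi^M_{C,N}\circ(C\Box m_{M,M,N})$ and check that $\omega^M_{A,B,N}$ and $\gamma^M_N$ satisfy the defining equation of a module $2$-morphism. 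Each of these is a string-diagram identity obtained by unwinding $\chi^M$ as in Lemma~\ref{lem:enrichmentidentification} and $m$ as above into composites of $u_M$, $c_M$, $\alpha^{\mathfrak{M}}$, and images under $\underline{Hom}(M,-)$, then pushing the relevant coupons past the strand $u_M$ by $2$-naturality and invoking the swallowtail equations (\ref{eqn:swallowtail1}) and (\ref{eqn:swallowtail2}) together with Lemma~\ref{lem:biadj2functor}, in exactly the style of the proofs of Theorem~\ref{thm:enrichmentmodule} and Proposition~\ref{prop:enrichedhommod}; the diagrammatic bookkeeping I would relegate to a technical subsection or the appendix.

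The main obstacle is this last step: constructing $\psi^{\chi^M_{C,N}}$ and confirming that $\omega^M$ and $\gamma^M$ are module $2$-morphisms. This forces one to make explicit the interaction between the enriched composition $m$ and the ``$Hom$ of a coherent $2$-adjunction'' description of $\chi^M$, and it is precisely there that the swallowtail-and-naturality manipulations are unavoidable; everything else is a routine transport of the already-established structure on $\underline{Hom}_{\mathfrak{M}}(M,-)\colon\mathfrak{M}\to\mathfrak{C}$ along $U$.
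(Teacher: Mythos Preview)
Your proposal is correct and follows essentially the same approach as the paper's proof: the lift to $\mathbf{Mod}_{\mathfrak{C}}(\underline{End}(M))$ comes from the enriched composition of Theorem~\ref{thm:enrichmentmodule}, and the left $\mathfrak{C}$-module structure is obtained by upgrading the data $\chi^M,\omega^M,\gamma^M$ of Proposition~\ref{prop:enrichedhommod} to right $\underline{End}(M)$-module data, with the key work being the construction of $\psi^{\chi^M_{C,N}}$ and the verification that $\omega^M,\gamma^M$ are module $2$-morphisms. The paper adds the useful observation that the string diagram for $\psi^{\chi^M_{C,N}}$ is nearly identical to that for the enriched pentagonator $\pi^{\underline{\mathfrak{M}}}$, so that its coherence axioms reduce to the manipulations already carried out in the proof of Theorem~\ref{thm:enrichmentmodule}; this is worth making explicit, as it spares you from redoing those diagrams.
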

\begin{proof}
The first part is a consequence of theorem \ref{thm:enrichmentmodule}. Now, recall from proposition \ref{prop:catmodmodcat} that $\mathbf{Mod}_{\mathfrak{C}}(\underline{End}(M))$ has a canonical $\mathfrak{C}$-module structure. The second part of the statement essentially follows from proposition \ref{prop:enrichedhommod}. To be precise, we have to upgrade the 1-morphism $\chi^M_{C,N}$ constructed in the proof of lemma \ref{lem:enrichmentidentification} to a right $\underline{End}(M)$-module 1-morphism, and show that the 2-isomorphisms $\omega^M$ and $\gamma^M$ constructed in the proof of proposition \ref{prop:enrichedhommod} are $\underline{End}(M)$-module 2-morphisms. For simplicity, let us assume that the pair $(\mathfrak{C},\mathfrak{M})$ is strict cubical. The 2-isomorphism $\psi^{\chi^M_{C,N}}$ providing $\chi^M_{C,N}$ with the desired module structure is given by

\newlength{\mainthm}

\settoheight{\mainthm}{\includegraphics[width=90mm]{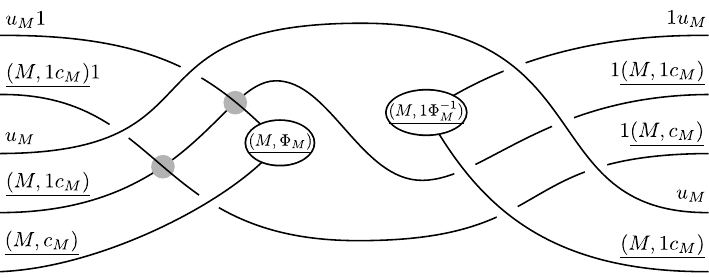}}

\begin{center}
\begin{tabular}{@{}cc@{}}

\raisebox{0.45\mainthm}{$\psi^{\chi^M_{C,N}}:=$} &
\includegraphics[width=90mm]{Module2CategoriesPictures/enrichmentexpanded/psichi.pdf}.
\end{tabular}
\end{center}

Now, the string diagram defining $\psi^{\chi^M_{C,N}}$, and that used to define $\pi^{\underline{\mathfrak{M}}}$ in figure \ref{fig:enrichedpentagonator} are extremely close to one another. It should therefore come as not surprise that the coherence axioms that have to be satisfied by $\psi^{\chi^M_{C,N}}$ (see definition \ref{def:modulemap}) follow using essentially the same arguments as those used in the proof of theorem \ref{thm:enrichmentmodule} (and depicted in appendix \ref{sec:diagcoherenceenriched}). Finally, proving that $\omega^M$ and $\gamma^M$ are compatible with this right module structure is easy and left to the reader.
\end{proof}

\begin{Remark}\label{rem:EnrichmentModuleGeneral}
The assumptions that $\mathfrak{C}$ be a multifusion 2-category and $\mathfrak{M}$ be finite semisimple 2-category are never explicitly used in the proof of theorem \ref{thm:enrichmentmodule}. Rather, they manifest themselves in the form of proposition \ref{prop:adjointmodule}. Therefore, for a completely general monoidal 2-category $\mathfrak{C}$ and module 2-category $\mathfrak{M}$ over $\mathfrak{C}$, the statement of theorem \ref{thm:enrichmentmodule} holds provided that proposition \ref{prop:adjointmodule} can be established for $\mathfrak{M}$. In particular, if $\mathds{k}$ is an arbitrary perfect field, $\mathfrak{C}$ is a locally separable compact semisimple tensor 2-category, and $\mathfrak{M}$ is a compact semisimple left $\mathfrak{C}$-module 2-category, then theorem \ref{thm:enrichmentmodule} holds thanks to remark \ref{rem:EnrichmentGeneral}.
\end{Remark}

\section{Comparing Finite Semisimple Module 2-Cate\-gories \& 2-Categories of Modules}\label{sec:2Ostrik}

\subsection{Bimodules associated to Module 2-Functors}\label{sec:2Ostrikfunctor}

Unless explicitly stated otherwise, we will work with $\mathfrak{C}$ a multifusion 2-category over an algebraically closed field $\mathds{k}$ of characteristic zero. Let us also fix $\mathfrak{M}$ and $\mathfrak{N}$ two finite semisimple left $\mathfrak{C}$-module 2-categories over $\mathds{k}$. Without loss of generality, we may simultaneously assume that the pairs $(\mathfrak{C},\mathfrak{M})$ and $(\mathfrak{C},\mathfrak{N})$ are strict cubical, by appealing to remark \ref{rem:coherence}. We have seen in corollary \ref{cor:endalgebra} that, given any object $M$ of $\mathfrak{M}$, and $N$ of $\mathfrak{N}$, then $\underline{End}_{\mathfrak{M}}(M)$ and $\underline{End}_{\mathfrak{N}}(N)$ are algebras in $\mathfrak{C}$. Given $F:\mathfrak{M}\rightarrow\mathfrak{N}$ a left $\mathfrak{C}$-module 2-functor, we can consider the right $\underline{End}_{\mathfrak{N}}(N)$-module $\underline{Hom}_{\mathfrak{N}}(N,F(M))$. We show that this object carries a compatible left $\underline{End}_{\mathfrak{M}}(M)$-module, so that it is in fact a bimodule. In order to achieve this, we use the notion of a $\mathfrak{C}$-enriched 2-functor (definition 3.5 of \cite{GS}), and the following proposition.

\begin{Proposition}\label{prop:enrichedmodule2functor}
The left $\mathfrak{C}$-module 2-functor $F$ can be canonically upgraded to a $\mathfrak{C}$-enriched 2-functor $\underline{F}:\underline{\mathfrak{M}}\rightarrow \underline{\mathfrak{N}}$.
\end{Proposition}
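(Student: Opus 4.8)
The plan is to read off every piece of data of the enriched 2-functor $\underline{F}:\underline{\mathfrak{M}}\to\underline{\mathfrak{N}}$ from the left $\mathfrak{C}$-module 2-functor structure $(F,\chi^F,\omega^F,\gamma^F)$, using the universal property of the enriched hom-objects recorded in Proposition \ref{prop:adjointmodule} and Remark \ref{rem:biadjunction}. As usual we may assume that the pairs $(\mathfrak{C},\mathfrak{M})$ and $(\mathfrak{C},\mathfrak{N})$ are simultaneously strict cubical. On objects we set $\underline{F}(M):=F(M)$. For the action on hom-objects, observe that for all $M,N$ in $\mathfrak{M}$ the composite $F(c_{M,N})\circ\chi^F:\underline{Hom}_{\mathfrak{M}}(M,N)\Diamond F(M)\to F(N)$ is a $1$-morphism of $\mathfrak{N}$ out of $\underline{Hom}_{\mathfrak{M}}(M,N)\Diamond F(M)$; transporting it across the equivalence $t$ of Proposition \ref{prop:adjointmodule} for the pair $(F(M),F(N))$ — equivalently, precomposing $u_{F(M)}$ with $\underline{Hom}_{\mathfrak{N}}(F(M),F(c_{M,N})\circ\chi^F)$ — yields the required structure $1$-morphism $$\underline{F}_{M,N}:\underline{Hom}_{\mathfrak{M}}(M,N)\to\underline{Hom}_{\mathfrak{N}}(F(M),F(N))$$ in $\mathfrak{C}$, characterised up to canonical $2$-isomorphism by $t^{*}(\underline{F}_{M,N})\cong F(c_{M,N})\circ\chi^F$.

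Next I would supply the coherence $2$-isomorphisms demanded by definition 3.5 of \cite{GS}. The unit comparison $\underline{F}_{M,M}\circ j_{M}\cong j_{F(M)}$ is assembled from $\gamma^F$ (compatibility of $F$ with the left unitors), the modification $\Psi$, and naturality of $u$; since $j_{M}=u_{M,I}$, this is a short string-diagram identity. The composition comparison, relating $\underline{F}_{M,P}\circ m^{\underline{\mathfrak{M}}}$ to $m^{\underline{\mathfrak{N}}}\circ(\underline{F}_{N,P}\Box\underline{F}_{M,N})$, is built from $\omega^F$ (compatibility of $F$ with the associators) together with $\Phi$, $\Psi$ and the interchangers; its string diagram is a close relative of the one defining $\pi^{\underline{\mathfrak{M}}}$ in figure \ref{fig:enrichedpentagonator} and of $\psi^{\chi^{M}_{C,N}}$ in the proof of Lemma \ref{lem:pre2ostrik}. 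One also has to record the $2$-functorial datum comparing $\underline{F}$ with the local $Hom$-categories, but under the strict-cubical hypothesis this reduces to the interchangers $\phi^F$ of $F$.

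Finally I would check the axioms of a $\mathfrak{C}$-enriched 2-functor. Each is an equality of $2$-morphisms in a hom-category of $\mathfrak{C}$, and the verification runs exactly parallel to the verification of the enriched-2-category axioms in Theorem \ref{thm:enrichmentmodule}: one slides the strands labelled $u$ past the coupons by naturality, invokes the swallowtail equations (\ref{eqn:swallowtail1}) and (\ref{eqn:swallowtail2}) — in their $2$-functor-stable form, via Lemma \ref{lem:biadj2functor} — and then feeds in the pentagon and triangle axioms a, b, c of definition \ref{def:module2fununpacked} satisfied by $(\chi^F,\omega^F,\gamma^F)$, together with the coherence of $F$ as a $2$-functor. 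The main obstacle is purely organizational: managing the string-diagram bookkeeping in the largest of these axioms, namely the compatibility of the composition comparison with $\pi^{\underline{\mathfrak{M}}}$ and $\pi^{\underline{\mathfrak{N}}}$; no idea beyond those already present in the proofs of Theorem \ref{thm:enrichmentmodule} and Lemma \ref{lem:pre2ostrik} is required. Once $F$ has been upgraded to $\underline{F}$, evaluating $\underline{F}$ on the chosen $\mathfrak{C}$-generators of $\mathfrak{M}$ and $\mathfrak{N}$ produces the desired bimodule.
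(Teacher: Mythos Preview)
Your proposal is correct and follows essentially the same approach as the paper: you define $\underline{F}_{M,N}$ as the transport of $F(c_{M,N})\circ\chi^F$ across the adjunction $t$, which is exactly the composite $\underline{Hom}_{\mathfrak{N}}(F(M),F(c_M))\circ\underline{Hom}_{\mathfrak{N}}(F(M),\chi^F)\circ u_{F(M)}$ the paper writes down, and you build the unit and composition comparisons out of $\gamma^F$, $\omega^F$, $\Phi$, $\Psi$ just as the paper's $\iota_P$ and $\mu_{PQR}$ do. The paper likewise declines to spell out the coherence verifications, pointing (as you do) to the arguments already given for Theorem~\ref{thm:enrichmentmodule}.
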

\begin{proof}
For any two objects $P, Q$ of $\mathfrak{M}$, we let $$\underline{F}_{PQ}:\underline{Hom}_{\mathfrak{M}}(P,Q)\rightarrow \underline{Hom}_{\mathfrak{N}}(F(P),F(Q))$$ be the 1-morphism in $\mathfrak{C}$ given by the composite \begin{align*}\underline{Hom}_{\mathfrak{M}}(P,Q)&\xrightarrow{u_{F(P)}}\underline{Hom}_{\mathfrak{N}}(F(P),\underline{Hom}_{\mathfrak{M}}(P,Q)\Diamond^{\mathfrak{N}}F(P))\\ &\xrightarrow{\underline{Hom}_{\mathfrak{N}}(F(P),\chi^F)}\underline{Hom}_{\mathfrak{N}}(F(P),F(\underline{Hom}_{\mathfrak{M}}(P,Q)\Diamond^{\mathfrak{M}}P))\\ &\xrightarrow{\underline{Hom}_{\mathfrak{N}}(F(P),F(c_P))}\underline{Hom}_{\mathfrak{N}}(F(P),F(Q)).\end{align*}

Given objects $P,Q,R$ of $\mathfrak{M}$, the 2-isomorphism $\mu_{PQR}$ in $\mathfrak{C}$ is depicted in figure \ref{fig:muPQR}, and given $P$ in $\mathfrak{M}$, the 2-isomorphism $\iota_P$ in $\mathfrak{C}$ is given by

\newlength{\enrichexpanded}
\settoheight{\enrichexpanded}{\includegraphics[width=60mm]{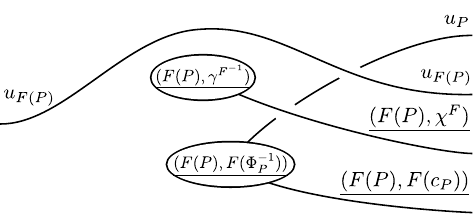}}

\begin{center}
\begin{tabular}{@{}cc@{}}

\raisebox{0.45\enrichexpanded}{$\iota_P =$} &

\includegraphics[width=60mm]{Module2CategoriesPictures/enrichmentexpanded/iotaP.pdf}.

\end{tabular}
\end{center}

Checking the coherence axioms for $\mu_{P,Q,R}$, and $\iota_P$ can be done using arguments similar to the ones we have already given (together with the axioms of definition \ref{def:module2nat}). We leave the details to the reader.
\end{proof}

\begin{landscape}
    \vspace*{\fill}
    \begin{figure}[!hbt]
    \centering
    \includegraphics[width=180mm]{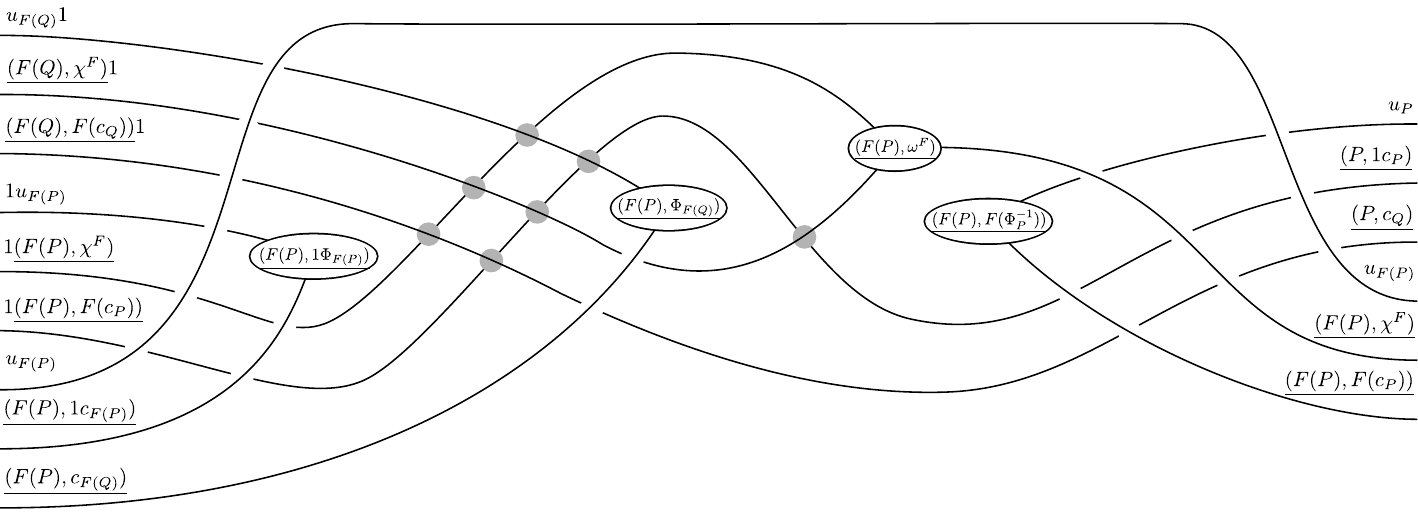}
    \caption{The coherence 2-isomorphism $\mu_{P,Q,R}$}
    \label{fig:muPQR}
    \end{figure}
    \vfill
\end{landscape}

\begin{Corollary}\label{cor:bimodule}
The object $\underline{Hom}_{\mathfrak{N}}(N,F(M))$ is an $(\underline{End}_{\mathfrak{M}}(M), \underline{End}_{\mathfrak{N}}(N))$-bimodule in $\mathfrak{C}$.
\end{Corollary}
\begin{proof}
By theorem \ref{thm:enrichmentmodule}, it follows immediately that $\underline{Hom}_{\mathfrak{N}}(N,F(M))$ has an $(\underline{End}_{\mathfrak{N}}(F(M)), \underline{End}_{\mathfrak{N}}(N))$-bimodule structure given by composition. Now, proposition \ref{prop:enrichedmodule2functor} implies that there is a canonical 1-morphism $$\underline{End}_{\mathfrak{M}}(M)\rightarrow\underline{End}_{\mathfrak{N}}(F(M))$$ of algebras in $\mathfrak{C}$. The bimodule version of lemma \ref{lem:1morphismalgebras2functor} proves the claim.
\end{proof}

In order to prove our next theorem, we will need a stronger result.

\begin{Proposition}\label{prop:enrichedbimodules}
The assignment $$\begin{tabular}{r c c c}$\mathbf{B}:$& $Fun_{\mathfrak{C}}(\mathfrak{M},\mathfrak{N})$ & $\rightarrow$ & $\mathbf{Bimod}_{\mathfrak{C}}(\underline{End}_{\mathfrak{M}}(M), \underline{End}_{\mathfrak{N}}(N)),$\\[0.2cm] & $F$ & $\mapsto$ & $\underline{Hom}_{\mathfrak{N}}(N,F(M))$\end{tabular}$$ defines a linear 2-functor from the 2-category of left $\mathfrak{C}$-module 2-functors from $\mathfrak{M}$ to $\mathfrak{N}$ to the 2-category of $(\underline{End}_{\mathfrak{M}}(M), \underline{End}_{\mathfrak{N}}(N))$-bimodules in $\mathfrak{C}$.
\end{Proposition}
\begin{proof}
Given a left $\mathfrak{C}$-module 2-functor $F:\mathfrak{M}\rightarrow \mathfrak{N}$, following corollary \ref{cor:bimodule}, we set $$\mathbf{B}(F):=\underline{Hom}_{\mathfrak{N}}(N,F(M))$$ as an $(\underline{End}_{\mathfrak{M}}(M), \underline{End}_{\mathfrak{N}}(N))$-bimodule.

Now, let $\theta:F\Rightarrow G$ be a left $\mathfrak{C}$-module 2-natural transformations between two left $\mathfrak{C}$-module 2-functors $F,G:\mathfrak{M}\rightarrow \mathfrak{N}$. We claim that the 1-morphism $\underline{Hom}_{\mathfrak{N}}(N,\theta_M)$ in $\mathfrak{C}$ can be upgraded to an $(\underline{End}_{\mathfrak{M}}(M), \underline{End}_{\mathfrak{N}}(N))$-bimodule 1-morphism. The right $\underline{End}_{\mathfrak{N}}(N)$-module structure is simply given by naturality, whereas the left $\underline{End}_{\mathfrak{M}}(M)$ is given by the 2-isomorphism $\omega^{\underline{(N,\theta)}}$ depicted in figure \ref{fig:omega(N,theta)}.

\begin{figure}[!hbt]
    \centering
    \includegraphics[width=120mm]{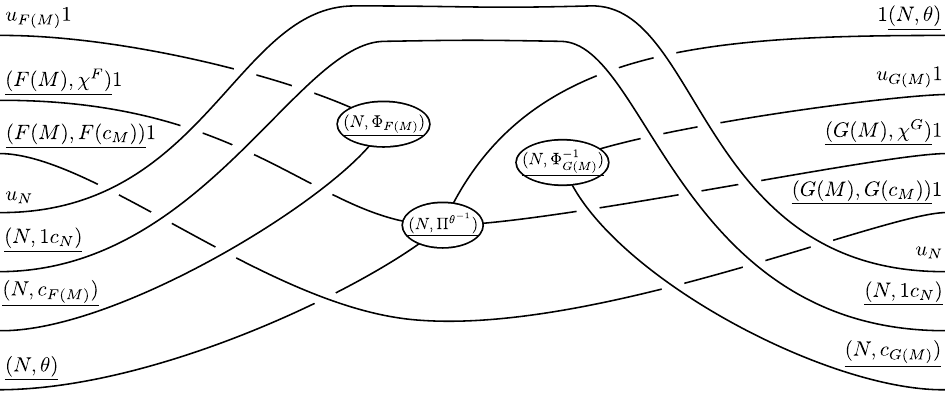}
    \caption{The coherence 2-isomorphism $\omega^{\underline{(N,\theta)}}$}
    \label{fig:omega(N,theta)}
\end{figure}

The required compatibility conditions follow readily from the axioms of definition \ref{def:module2nat}. This allows us to define $$\mathbf{B}(\theta):= \underline{Hom}_{\mathfrak{N}}(N,\theta_M)$$ as a $(\underline{End}_{\mathfrak{M}}(M), \underline{End}_{\mathfrak{N}}(N))$-bimodule 1-morphism.

Given a left $\mathfrak{C}$-module modification $\Xi:\sigma\Rrightarrow\theta$ between two left left $\mathfrak{C}$-module 2-natural modifications $\theta,\sigma:F\Rightarrow G$, it is easy to check that $\underline{Hom}_{\mathfrak{N}}(N,\Xi_M)$ defines a $(\underline{End}_{\mathfrak{M}}(M), \underline{End}_{\mathfrak{N}}(N))$-bimodule 2-morphism, which allows us to set $$\mathbf{B}(\Xi):= \underline{Hom}_{\mathfrak{N}}(N,\Xi_M).$$ Observe that this assignment is  functorial.

It remains to check that $\mathbf{B}$ is a 2-functor. Let $Id_F^{\mathfrak{C}}$ be the identity left $\mathfrak{C}$-module 2-natural transformation constructed in proposition \ref{prop:hom2cat}. Note that thanks to our strictness hypothesis, $\Pi^{Id_F^{\mathfrak{C}}} = Id_{\chi^F}$. Thus, by inspecting figure \ref{fig:omega(N,theta)}, we find that we can set $\phi^{\mathbf{B}}_F:=\phi^{\underline{Hom}(N,-)}_{F(M)}$, as the right hand-side is an invertible $(\underline{End}_{\mathfrak{M}}(M), \underline{End}_{\mathfrak{N}}(N))$-bimodule 2-morphism. Furthermore, given $\theta:F\Rightarrow G$, and $\xi:G\Rightarrow H$ two left $\mathfrak{C}$-module 2-natural transformations, we define $\phi^{\mathbf{B}}_{\xi,\theta}=\phi^{\underline{Hom}(N,-)}_{\xi_M,\theta_M}$. Examination shows that the right hand-side is an invertible $(\underline{End}_{\mathfrak{M}}(M), \underline{End}_{\mathfrak{N}}(N))$-bimodule 2-morphism, justifying the definition.

Finally, the coherence axioms one has to check follow immediately from the coherence axioms for $\underline{Hom}(N,-)$.
\end{proof}

\begin{Remark}\label{rem:Morita3Cat}
Left $\mathfrak{C}$-module 2-functors can be composed in an obvious way. We expect that there is corresponding ``tensor product'' operation on the bimodule side. In particular, if $\mathfrak{M}=\mathfrak{N}$, and $M=N$, then the equivalence of proposition \ref{prop:enrichedbimodules} should send composites of $\mathfrak{C}$-module 2-functors to ``tensor product'' of bimodules.
\end{Remark}

\begin{Remark}\label{rem:BimoduleGeneral}
Thanks remark \ref{rem:2OstrikGeneral}, all the results of this section and their proofs remain valid under the more general assumptions that $\mathds{k}$ is a perfect field, $\mathfrak{C}$ is a locally separable compact semisimple tensor 2-category, and $\mathfrak{M}$, $\mathfrak{N}$ are compact semisimple left $\mathfrak{C}$-module 2-categories.
\end{Remark}

\subsection{Rigid Algebras}\label{sec:rigidseparable}

We begin by recalling two definitions from \cite{JFR}.

\begin{Definition}\label{def:rigidseparable}
An algebra $A$ in $\mathfrak{C}$ is called rigid provided that the multiplication map $m:A\Box A\rightarrow A$ has a right adjoint $m^*$ as a 1-morphism of $(A,A)$-bimodules. 
\end{Definition}

\begin{Definition}
An algebra $A$ is called separable if it is rigid, and the counit witnessing the adjunction of $(A,A)$-bimodule 1-morphisms between $m$ and $m^*$ admits a section as an $(A,A)$-bimodule 2-morphism.
\end{Definition}

\begin{Example}\label{ex:rigidalgebra2Vect}
Expanding on example \ref{ex:algebra2vect}, proposition 1.3 of \cite{BJS} shows that rigid algebras in $\mathbf{2Vect}$ are precisely multifusion categories. Further, as it was proven in \cite{DSPS14} that every multifusion category is separable, separable algebras in $\mathbf{2Vect}$ also correspond exactly to multifusion categories.
\end{Example}

\begin{Example}\label{ex:rigidalgebra2VectG}
By example \ref{ex:algebra2vectG}, we know that algebra in $\mathbf{2Vect}_G$ correspond exactly to $G$-graded monoidal finite semisimple categories. Now, observe that the right adjoint to a $G$-graded functor between $G$-graded finite semisimple categories has to be $G$-graded. Using this observation together with the above example, we find that rigid algebras in $\mathbf{2Vect}_G$ are exactly given by $G$-graded multifusion categories. A similar observation holds for the section of a $G$-graded natural transformation, which implies that separable algebras in $\mathbf{2Vect}_G$ are precisely $G$-graded multifusion categories.
\end{Example}

\begin{Example}\label{ex:rigidalgebrabraidedfusion}
We have recalled in example \ref{ex:algebrasbraidedfusion} that algebras in $\mathbf{Mod}(\mathcal{B})$ are precisely monoidal finite semisimple categories $\mathcal{C}$ equipped with a central monoidal functor $\mathcal{B}\rightarrow \mathcal{C}$. A slight adaptation of lemma 2.21 of \cite{JFR} proves that the algebra associated to $\mathcal{C}$ is rigid if and only if $\mathcal{C}$ is multifusion. Further, one can check that every rigid algebra in $\mathbf{Mod}(\mathcal{B})$ is separable, whence, separable algebras in $\mathbf{Mod}(\mathcal{B})$ are also given by multifusion categories equipped with a central monoidal functor from $\mathcal{B}$.
\end{Example}

\begin{Remark}\label{rem:rigidi=separable}
The above examples provide motivation for the conjecture made in \cite{JFR} that every rigid algebra in a multifusion 2-category is separable. We wish to remark that this conjecture is stronger than the assertion that every multifusion 2-category is separable, in the sense that their Drinfeld centers are finite semisimple 2-categories. Namely, combining this conjecture of \cite{JFR} with theorem \ref{thm:rigid}, one can prove that every multifusion 2-category is separable (see \cite{D6}).
\end{Remark}

We now prove that the algebras produced by corollary \ref{cor:endalgebra} are always rigid.

\begin{Theorem}\label{thm:rigid}
Given $\mathfrak{M}$ a finite semisimple left $\mathfrak{C}$-module 2-category, and $M$ an object of $\mathfrak{M}$, the algebra $\underline{End}(M)$ is rigid.
\end{Theorem}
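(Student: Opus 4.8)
The strategy is to reduce the rigidity of $\underline{End}(M)$ to a rigidity-type statement that we already control, namely the behaviour of the enriched composition 1-morphisms $m_{M,N,M}$ under taking adjoints, and to exploit the adjunction $(-)\Diamond M \dashv \underline{Hom}(M,-)$ from Remark \ref{rem:biadjunction}. First I would reduce to the strict cubical case via Proposition \ref{prop:coherence}. Next, recall that since $\mathfrak{C}$ is a multifusion 2-category, every 1-morphism of $\mathfrak{C}$ admits both a left and a right adjoint; in particular the multiplication $m = m_{M,M,M}:\underline{End}(M)\Box\underline{End}(M)\to\underline{End}(M)$ has a right adjoint $m^*$ in $\mathfrak{C}$, with unit $\eta^m$ and counit $\epsilon^m$. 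The content of the theorem is that this adjunction can be promoted to an adjunction \emph{internal to} $\mathbf{Bimod}_{\mathfrak{C}}(\underline{End}(M),\underline{End}(M))$, i.e. that $m^*$ carries a bimodule structure for which $\eta^m$ and $\epsilon^m$ become bimodule 2-morphisms, and that these still satisfy the triangle identities (which is automatic, as the triangle identities are equalities of 2-morphisms that already hold in $\mathfrak{C}$).

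The key step is therefore to write down the $(\underline{End}(M),\underline{End}(M))$-bimodule structure on $m^*$. The natural source for this is the identification, coming from the enrichment, of $\underline{End}(M)\Box\underline{End}(M)$ with something like $\underline{Hom}$ of a ``doubled'' object together with the fact that $m$ itself is built out of $u_M$, $c_M$, and the structure maps of the 2-adjunction (see the formula for $m_{M,N,P}$ in the proof of Theorem \ref{thm:enrichmentmodule}). Concretely, I would use that $m$ factors through applications of the 2-functor $\underline{Hom}(M,-)$ to the counit 1-morphisms $c_{M,-}$, and that $\underline{Hom}(M,-)$ being a left $\mathfrak{C}$-module 2-functor (Proposition \ref{prop:enrichedhommod}) together with the coherence of the 2-adjunction lets one transport adjoints. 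In other words, $m^*$ should be exhibited as (a composite involving) $\underline{Hom}(M, c_{M,-}^{*})$, and the bimodule structure 2-isomorphisms on $m^*$ should be assembled from $\omega^M$, $\gamma^M$, the swallowtail equations \eqref{eqn:swallowtail1}, \eqref{eqn:swallowtail2}, and the mates (under adjunction) of the associator $\pi^{\underline{\mathfrak{M}}}$ and unitors $\sigma, \tau$ of the enriched 2-category. Verifying that $\eta^m$ and $\epsilon^m$ are bimodule 2-morphisms then amounts to a string-diagram computation of the same flavour as those in the appendix: one slides caps and cups past the coupons labelled by $\Phi_M$, $\Psi_M$, and the interchangers, using naturality and the two swallowtail equations.

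The main obstacle I anticipate is precisely this last verification: producing the correct bimodule 2-isomorphisms on $m^*$ and checking the (two) axioms of Definition \ref{def:modulemap} (in its bimodule form) for them, together with the compatibility of $\eta^m$ and $\epsilon^m$. This is where one genuinely needs that $\mathfrak{C}$ is rigid as a monoidal 2-category (so that the relevant adjoints and their coherence data exist) and where the coherence of the 2-adjunction $(-)\Diamond M\dashv\underline{Hom}(M,-)$ — in particular both swallowtail identities — is used in an essential way; a naive choice of bimodule structure on $m^*$ will fail the pentagon-type axiom, and getting the signs/twists right is the delicate point. Once the bimodule adjunction $m\dashv m^*$ is established, rigidity of $\underline{End}(M)$ follows immediately from Definition \ref{def:rigidseparable}, and Theorem \ref{thm:rigid} is proved. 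I would also remark, as a sanity check matching Remark \ref{rem:separability} and the worked examples, that when $\mathfrak{C}=\mathbf{2Vect}$ this recovers the statement that $\underline{End}(M)$, a multifusion category, has a multiplication admitting a bimodule right adjoint.
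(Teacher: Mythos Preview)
Your approach differs substantially from the paper's and, as written, has a genuine gap. The problematic step is ``write down the $(\underline{End}(M),\underline{End}(M))$-bimodule structure on $m^*$'' by taking mates of the bimodule structure on $m$. Mating the invertible 2-morphisms that make $m$ a bimodule 1-morphism under $m\dashv m^*$ produces, a priori, only a \emph{lax} (or oplax) bimodule structure on $m^*$: the resulting structure 2-morphisms need not be invertible. This is the standard doctrinal-adjunction phenomenon, and nothing in your outline addresses why invertibility survives. Since Definition~\ref{def:modulemap} and its bimodule analogue demand invertible $\psi$'s, this is not cosmetic. Your suggestion to realise $m^*$ via $\underline{Hom}(M,c_{M,-}^{*})$ faces the same obstacle: one still has to show the resulting map is a \emph{strong} bimodule 1-morphism, and neither the rigidity of $\mathfrak{C}$ nor the swallowtail identities for $(-)\Diamond M\dashv\underline{Hom}(M,-)$ supply this directly.

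The paper circumvents the difficulty by working one categorical level up. After reducing (via Theorem~\ref{thm:2Ostrik}) to $\mathfrak{M}=\mathbf{Mod}_{\mathfrak{C}}(A)$ with $M=A$, it identifies $m$ with the image, under the 2-functor $\mathbf{B}$ of Proposition~\ref{prop:enrichedbimodules}, of the left $\mathfrak{C}$-module 2-natural transformation $\mu:F\Rightarrow Id$, where $F(P)=P\Box A$ and $\mu_P=n^P$. The key Lemma~\ref{lem:adjointmodule} then shows that $\mu$ admits a right adjoint $\mu^*$ \emph{as a left $\mathfrak{C}$-module 2-natural transformation}; the proof that the mate $\Pi^{\mu^*}$ is invertible uses crucially that the module structures $\chi^F$ and $\chi^{Id}$ on the 2-functors are adjoint \emph{equivalences}, which is exactly the leverage missing in the direct bimodule picture. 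Since any 2-functor preserves adjunctions, $\mathbf{B}(\mu^*)$ is then the sought bimodule right adjoint to $m\simeq\mathbf{B}(\mu)$. If you wish to salvage the direct route, you must supply an independent argument that the mate 2-morphisms are invertible, but no such argument appears in your plan.
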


Before giving the proof, we establish a technical lemma.

\begin{Lemma}\label{lem:adjointmodule}
Let $\mathfrak{M}$, and $\mathfrak{N}$ be two finite semisimple left $\mathfrak{C}$-module 2-categories, $F,G:\mathfrak{M}\rightarrow \mathfrak{N}$ two left $\mathfrak{C}$-module 2-functors, and $\theta:F\Rightarrow G$ a left $\mathfrak{C}$-module 2-natural transformation. Then, $\theta$ has a right adjoint as a left $\mathfrak{C}$-module 2-natural transformation.
\end{Lemma}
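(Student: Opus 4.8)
The plan is to produce the right adjoint $\theta^*:G\Rightarrow F$ first as a plain $2$-natural transformation, then equip it with a left $\mathfrak{C}$-module structure $\Pi^{\theta^*}$ coming from inverting and mating the given modification $\Pi^\theta$, and finally check that the unit and counit of the adjunction $\theta\dashv\theta^*$ are themselves left $\mathfrak{C}$-module modifications (i.e.\ satisfy the equation of Definition \ref{def:module2modif}). For the underlying $2$-natural transformation: since $\mathfrak{N}$ is a finite semisimple $2$-category, for every object $P$ of $\mathfrak{M}$ the $1$-morphism $\theta_P:F(P)\rightarrow G(P)$ in $\mathfrak{N}$ admits a right adjoint $\theta_P^*$ with unit $\eta_P$ and counit $\epsilon_P$; I would take these componentwise as the data of $\theta^*$, and the structure $2$-isomorphism $\theta^{*}_f$ for $f:P\rightarrow Q$ in $\mathfrak{M}$ is obtained by mating $\theta_f$ along the adjunctions $\theta_P\dashv\theta_P^*$ and $\theta_Q\dashv\theta_Q^*$ in the usual cups-and-caps fashion. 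Naturality in $2$-morphisms and the two $2$-natural transformation coherence axioms follow from the corresponding properties of $\theta$ together with the triangle identities, exactly as in the standard "mates" calculus. Then the pair $(\eta,\epsilon)$ assembles into modifications $Id_G\Rrightarrow \theta\cdot\theta^*$ and $\theta^*\cdot\theta\Rrightarrow Id_F$ satisfying the swallowtail/triangle equations, so $\theta\dashv\theta^*$ in the $2$-category $Hom(\mathfrak{M},\mathfrak{N})$.

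Next I would define the module structure on $\theta^*$. Recall $\Pi^\theta$ is an invertible modification with component, on $A$ in $\mathfrak{C}$ and $M$ in $\mathfrak{M}$, filling the square with sides $1\Diamond\theta_M$, $\chi^F_{A,M}$, $\chi^G_{A,M}$, $\theta_{A\Diamond M}$; being invertible, $(\Pi^\theta)^{-1}$ fills the reversed square. Since $\chi^F$ and $\chi^G$ are \emph{adjoint} $2$-natural equivalences (Definition \ref{def:module2fununpacked}), they are in particular invertible, so I can conjugate $(\Pi^\theta)_{A,M}^{-1}$ by the adjunctions $\theta_M\dashv\theta_M^*$ and $\theta_{A\Diamond M}\dashv\theta_{A\Diamond M}^*$ to obtain a $2$-isomorphism filling the square with sides $1\Diamond\theta_M^*$, $\chi^G_{A,M}$, $\chi^F_{A,M}$, $\theta^*_{A\Diamond M}$ — this is $(\Pi^{\theta^*})_{A,M}$. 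That this family is a modification follows because $\Pi^\theta$ is one and mating preserves modification-hood; that it satisfies axioms (a) and (b) of Definition \ref{def:module2nat} is the heart of the computation: one takes the corresponding axioms for $\Pi^\theta$, inverts them, and drags the caps/cups through using naturality and the triangle identities, using also that $\chi^F,\chi^G$ are adjoint equivalences so their structure modifications $\omega^F,\gamma^F,\omega^G,\gamma^G$ interact cleanly with the mates. (If one prefers to avoid this by-hand check, one can invoke that in the $2$-category $Hom_{\mathfrak{C}}(\mathfrak{M},\mathfrak{N})$ of Proposition \ref{prop:hom2cat} the forgetful functor to $Hom(\mathfrak{M},\mathfrak{N})$ reflects adjunctions once one knows the candidate unit/counit are module modifications.)

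Finally I would verify that $\eta:Id_G\Rrightarrow\theta\cdot\theta^*$ and $\epsilon:\theta^*\cdot\theta\Rrightarrow Id_F$ are left $\mathfrak{C}$-module modifications, i.e.\ satisfy the single equation of Definition \ref{def:module2modif} relating them to the $\Pi$'s of source and target (using the composite-$\Pi$ formula from Proposition \ref{prop:hom2cat} for $\theta\cdot\theta^*$ and $\theta^*\cdot\theta$, and the identity-$\Pi$ for $Id_F,Id_G$). By construction $(\Pi^{\theta^*})_{A,M}$ is precisely the mate of $(\Pi^\theta)_{A,M}^{-1}$, so these equations reduce, after cancelling the cups and caps via the triangle identities, to trivial identities; hence $\eta$ and $\epsilon$ are module modifications, exhibiting $\theta\dashv\theta^*$ as an adjunction of left $\mathfrak{C}$-module $2$-natural transformations. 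I expect the main obstacle to be the bookkeeping in the second paragraph — checking axioms (a) and (b) of Definition \ref{def:module2nat} for $\Pi^{\theta^*}$ — since it requires threading the mating operation through the associator/unitor modifications $\omega^F,\omega^G,\gamma^F,\gamma^G$; this is exactly the kind of string-diagram manipulation (dragging caps past coupons, applying triangle identities) carried out elsewhere in the paper, so I would relegate the diagrams to the appendix and indicate that the argument is routine given the tools already developed.
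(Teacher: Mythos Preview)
Your approach is essentially the same as the paper's: build $\theta^*$ componentwise using right adjoints in the finite semisimple $2$-category $\mathfrak{N}$, define $\theta^*_f$ and $\Pi^{\theta^*}$ by mating, and check the axioms of Definition~\ref{def:module2nat}. You are in fact more thorough than the paper in one respect: you explicitly note that the unit and counit $\eta,\epsilon$ must be shown to be left $\mathfrak{C}$-module modifications for $\theta\dashv\theta^*$ to be an adjunction in $Hom_{\mathfrak{C}}(\mathfrak{M},\mathfrak{N})$, which is what is actually used in the proof of Theorem~\ref{thm:rigid}; the paper leaves this implicit.

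There is one point where you assert something the paper takes care to prove. You write that mating $(\Pi^\theta)^{-1}$ along the adjunctions $\theta_M\dashv\theta_M^*$ and $\theta_{A\Diamond M}\dashv\theta_{A\Diamond M}^*$ yields ``a $2$-isomorphism''. But mates along adjunctions that are not equivalences do not preserve invertibility in general, so this does not follow automatically from $(\Pi^\theta)^{-1}$ being invertible. The paper handles this by using the snake equations to rewrite $\Pi^{\theta^*}$ as a composite of (the appropriate transform of) $(\Pi^\theta)^{-1}$ together with the unit $\eta_{\chi^G}$ and counit $\epsilon_{\chi^F}$ of the adjoint \emph{equivalences} $\chi^F,\chi^G$; since the latter are $2$-isomorphisms, invertibility of $\Pi^{\theta^*}$ follows. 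You do invoke exactly the right hypothesis (that $\chi^F,\chi^G$ are adjoint equivalences), so you are on the right track, but the sentence ``so I can conjugate \dots\ to obtain a $2$-isomorphism'' hides a nontrivial step that should be made explicit.
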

\begin{proof}
Firstly, observe that $\theta$ has a right adjoint $\theta^*$ as a 2-natural transformation. This follows from the fact that $Hom(\mathfrak{M},\mathfrak{N})$ is a finite semisimple 2-category (see \cite{D1}). Concretely, on the object $M$ of $\mathfrak{M}$, we have $\theta^*_M := (\theta_M)^*$, and on the 1-morphism $f:M\rightarrow N$ of $\mathfrak{M}$, we have

\settoheight{\mainthm}{\includegraphics[width=30mm]{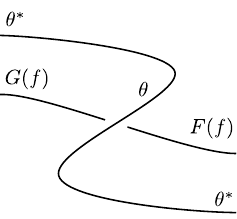}}

\begin{center}
\begin{tabular}{@{}cc@{}}

\raisebox{0.45\mainthm}{$\theta^*_f=$} &
\includegraphics[width=30mm]{Module2CategoriesPictures/rigidseparable/thetastarf.pdf}
\end{tabular}
\end{center}

\noindent in $Hom_{\mathfrak{N}}(\theta^*_N\circ G(f), F(f)\circ \theta^*_M)$. Secondly, given $C$ in $\mathfrak{C}$, and $M$ in $\mathfrak{M}$, we set

\settoheight{\mainthm}{\includegraphics[width=37.5mm]{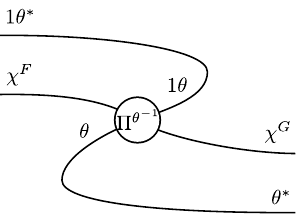}}

\begin{center}
\begin{tabular}{@{}cc@{}}

\raisebox{0.45\mainthm}{$\Pi^{\theta^*}_{C,M}:=$} &
\includegraphics[width=37.5mm]{Module2CategoriesPictures/rigidseparable/Pithetastar.pdf}
\end{tabular}
\end{center}

\noindent in $Hom_{\mathfrak{N}}(\theta^*_{C\Diamond^{\mathfrak{M}} M}\circ \chi^F_{C,M}, \chi^G_{C,M}\circ (C\Diamond^{\mathfrak{N}}\theta^*_M))$.

It is easy to see that $\Pi^{\theta^*}$ is a modification. It remains to argue that $\Pi^{\theta^*}$ is invertible. To this end, observe that $(\Pi^{\theta^{-1}})^*$ is invertible because $\Pi^{\theta}$ is. Further, as $\chi^F$ and $\chi^G$ are adjoint 2-natural equivalences, the units and counits of the corresponding adjunctions are 2-isomorphisms. Now, using the snake equations, we see that $\Pi^{\theta^*}$ is also represented by the following string diagram:

\settoheight{\mainthm}{\includegraphics[width=67.5mm]{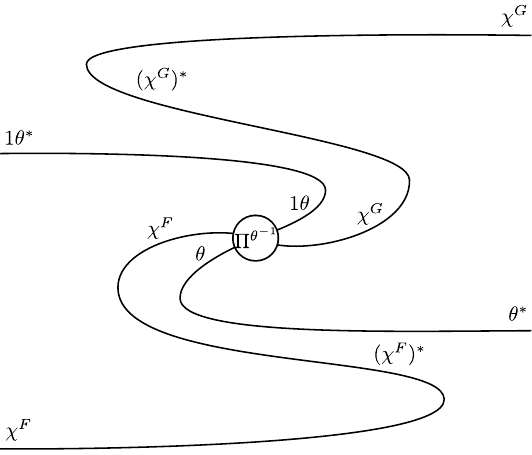}}

\begin{center}
\begin{tabular}{@{}cc@{}}

\raisebox{0.45\mainthm}{$\Pi^{\theta^*}_{C,M}=$} &
\includegraphics[width=67.5mm]{Module2CategoriesPictures/rigidseparable/Pithetastarexpanded.pdf}.
\end{tabular}
\end{center}

This alternative description makes it evident that $\Pi^{\theta^*}$ is a composite of $(\Pi^{\theta^{-1}})^*$, $\epsilon_{\chi^F}$, and $\eta_{\chi^G}$, and so is indeed invertible. Finally, it is not hard to check that $\Pi^{\theta^*}$ satisfy the axioms of definition \ref{def:module2nat}, which finishes the proof of the lemma.
\end{proof}

\begin{proof}[Proof of thm.\ \ref{thm:rigid}.]
Without loss of generality, we may assume that the pair $(\mathfrak{C},\mathfrak{M})$ is strict cubical. We note that the 2-functor $F:\mathfrak{M}\rightarrow \mathfrak{M}$ given by $N\mapsto \underline{Hom}(M,N)\Diamond M$ has a canonical left $\mathfrak{C}$-module structure provided by lemma \ref{lem:pre2ostrik}. Moreover, the 2-natural transformation $\mu:F\Rightarrow Id$ given on the object $N$ of $\mathfrak{M}$ by $\mu_N=c_{M,N}:\underline{Hom}(M,N)\Diamond M\rightarrow N$ also has a canonical left $\mathfrak{C}$-module structure. So, we can consider their images under the 2-functor $\mathbf{B}:Fun_{\mathfrak{C}}(\mathfrak{M},\mathfrak{M})\rightarrow \mathbf{Bimod}_{\mathfrak{C}}(\underline{End}(M),\underline{End}(M))$ of proposition \ref{prop:enrichedbimodules}. We find that $\mathbf{B}(Id)\simeq \underline{End}(M)$ as an $(\underline{End}(M),\underline{End}(M))$-bimodule. Further, we have that $\mathbf{B}(F)=\underline{Hom}(M,\underline{End}(M)\Diamond M)$. But, there is a canonical equivalence $$\chi^M_{\underline{End}(M),M}:\underline{End}(M)\Box \underline{End}(M)\simeq\underline{Hom}(M,\underline{End}(M)\Diamond M) ,$$ of right $\underline{End}(M)$-modules. Inspecting the construction given in corollary \ref{cor:bimodule}, we find that this equivalence is in fact compatible with the $(\underline{End}(M),\underline{End}(M))$-bimodule structures. Moreover, we have that $\mathbf{B}(\mu)$ corresponds under the above equivalence to the $(\underline{End}(M),\underline{End}(M))$-bimodule 1-morphism given by $m:\underline{End}(M)\Box \underline{End}(M)\rightarrow \underline{End}(M)$ . Finally, we can appeal to lemma \ref{lem:adjointmodule} to obtain a right adjoint $\mu^*:Id\Rightarrow F$ to $\mu$ as a left $\mathfrak{C}$-module 2-natural transformation. As right adjoints are preserved by 2-functors, we find that $\mathbf{B}(\mu^*)$ produces the desired right adjoint to $m\simeq \mathbf{B}(\mu)$ as an $(\underline{End}(M),\underline{End}(M))$-bimodule 1-morphism.
\end{proof}

Using the theorem above, we obtain a characterization of rigid algebras in multifusion 2-categories.

\begin{Theorem}\label{thm:characterizationrigid}
An algebra $A$ in a multifusion 2-category $\mathfrak{C}$ is rigid if and only if $\mathbf{Mod}_{\mathfrak{C}}(A)$ is a finite semisimple 2-category.
\end{Theorem}
\begin{proof}
Let us assume that $A$ is rigid. As the underlying 2-category of $\mathfrak{C}$ is finite semisimple, it follows from theorem 1.4.9 of \cite{DR} that there exists a multifusion category $\mathcal{C}$ and equivalence $\mathfrak{C}\simeq \mathbf{Mod}(\mathcal{C})$ of finite semisimple 2-categories. Using $\mathbf{Bimod}(\mathcal{C})$ to denote the monoidal 2-category of finite semisimple $\mathcal{C}$-$\mathcal{C}$-bimodule categories, it then follows from the main theorem of \cite{D1} that $End(\mathfrak{C})\simeq \mathbf{Bimod}(\mathcal{C})^{\boxtimes_{\mathcal{C}}op}$ as monoidal 2-categories.

Now, let us write $F:\mathfrak{C}^{\Box op}\rightarrow End(\mathfrak{C})$ for the canonical monoidal 2-functor $C\mapsto \{D\mapsto D\Box C\}$. As rigid algebras are preserved by all monoidal 2-functors, we find that $F(A)$ is a rigid algebra in $\mathbf{Bimod}(\mathcal{C})$. But, $\mathbf{Mod}(\mathcal{C})$ is canonically a right $\mathbf{Bimod}(\mathcal{C})$-module 2-category, so that there is an equivalence $$\mathbf{Mod}_{\mathfrak{C}}(A)\simeq \mathbf{Mod}_{\mathbf{Mod}(\mathcal{C})}(F(A))$$ between the 2-category of right $A$-modules in $\mathfrak{C}$, and the 2-category of right $F(A)$-modules in $\mathbf{Mod}(\mathcal{C})$.

Finally, it follows from lemma 2.23 of \cite{JFR} that the data of an algebra $B$ in $\mathbf{Bimod}(\mathcal{C})$ corresponds precisely to the data of a finite semisimple monoidal 1-category $\mathcal{D}$ together with a monoidal functor $\mathcal{C}\rightarrow \mathcal{D}$, and that $B$ is rigid if and only if $\mathcal{D}$ is multifusion. In particular, $F(A)$ corresponds to a mutlifusion category $\mathcal{A}$. Combining this fact with the observation that there is an equivalence $$\mathbf{Mod}_{\mathbf{Mod}(\mathcal{C})}(F(A))\simeq \mathbf{Mod}(\mathcal{A})$$ of 2-categories concludes the proof of this part of the statement thanks to theorem 1.4.8 of \cite{DR}.

Conversely, if $\mathbf{Mod}_{\mathfrak{C}}(A)$ is a finite semisimple 2-category, then $A$ is canonical a right $A$-module. Moreover, it follows from lemma \ref{lem:freeforgetadj} above that there is an equivalence $\underline{End}(A)\simeq A$ of algebras. But, $\underline{End}(A)$ is rigid thanks to theorem \ref{thm:rigid}, so that $A$ is rigid.
\end{proof}

\begin{Remark}\label{rem:rigidgeneral}
Let $\mathds{k}$ be a perfect field, $\mathfrak{C}$ a locally separable compact semisimple tensor 2-category over $\mathds{k}$, and $\mathfrak{M}$, $\mathfrak{N}$ be compact semisimple left $\mathfrak{C}$-module 2-categories. The first results of this subsection hold at this level of generality. More precisely, the proof of lemma \ref{lem:adjointmodule} needs to be modified as follows. By theorem 3.1.4 of \cite{D5}, in order to show that the 2-natural transformation $\theta$ admits a right adjoint, it is enough to prove the following statement: For any two finite semisimple tensor categories $\mathcal{C}$ and $\mathcal{D}$, every $\mathcal{C}$-$\mathcal{D}$-bimodule functor between two finite semisimple $\mathcal{C}$-$\mathcal{D}$-bimodule categories admits a right adjoint as a $\mathcal{C}$-$\mathcal{D}$-bimodule functor. This last claim follows from corollary 2.13 of \cite{DSPS14}. The remaining part of the proof of this lemma applies without change. The proofs of lemma \ref{lem:pre2ostrik} and theorem \ref{thm:rigid} remain valid thanks to remarks \ref{rem:EnrichmentModuleGeneral}, and \ref{rem:BimoduleGeneral}.

Thanks to remark 2.6.10 of \cite{DSPS13}, the proof of theorem \ref{thm:characterizationrigid} holds over any field of characteristic zero. On the other hand, theorem \ref{thm:characterizationrigid} is no longer valid in positive characteristic. For instance, assume that $\mathds{k}$ is an algebraically closed field of characteristic $p$, $\mathfrak{C}=\mathbf{2Vect}$, and take $A = \mathbf{Vect}_{\mathbb{Z}/p}$, a rigid algebra. Then, $\mathbf{Mod}_{\mathfrak{C}}(A)$ is the 2-category of all finite semisimple right $\mathbf{Vect}_{\mathbb{Z}/p}$-module 1-categories, which is not a semisimple 2-category. Nevertheless, let us record that the argument used in the proof of theorem \ref{thm:characterizationrigid} only breaks at the last step. More precisely, let us fix a rigid algebra $A$ in the locally separable compact semisimple tensor 2-category $\mathfrak{C}$. As in the above proof, we find that there exists a monoidal functor of finite semisimple tensor 1-categories $\mathcal{C}\rightarrow \mathcal{A}$. Further, the 2-category $\mathbf{Mod}_{\mathfrak{C}}(A)$ is equivalent to the 2-category of finite semisimple right $\mathcal{A}$-module 1-categories that are separable as right $\mathcal{C}$-module 1-categories in the sense of \cite{DSPS13}. But, the finite semisimple tensor 1-category $\mathcal{C}$ is separable given that $\mathfrak{C}$ is locally separable, so that every finite semisimple right $\mathcal{A}$-module 1-categories is separable as a $\mathcal{C}$-module 1-category thanks to proposition 2.5.10 of \cite{DSPS13}. Thence, we find that $\mathbf{Mod}_{\mathfrak{C}}(A)$ is equivalent to $\mathbf{Mod}(\mathcal{A})$, the 2-category of finite semisimple right $\mathcal{A}$-module 1-categories. We note that this is not a compact semisimple 2-category in general.
\end{Remark}

\subsection{Main Theorem}\label{sec:main}

We want to understand under what condition on the object $M$ the left $\mathfrak{C}$-module 2-functor $\underline{Hom}(M,-)$ considered in lemma \ref{lem:pre2ostrik} is an equivalence. The next definition provides a complete answer.

\begin{Definition}
An object $M$ of $\mathfrak{M}$ is called a $\mathfrak{C}$-generator if for every object $N$ of $\mathfrak{M}$, there exists $C$ in $\mathfrak{C}$ such that there is a non-zero 1-morphism $C\Diamond M \rightarrow N$.
\end{Definition}

\begin{Example}
Let $\mathfrak{N}$ be a finite semisimple 2-category. An object $N$ of $\mathfrak{N}$ is a $\mathbf{2Vect}$-generator if and only if $N$ has a non-zero summand in every connected component of $\mathfrak{N}$.
\end{Example}

\begin{Remark}
Let $\mathfrak{N}$ be a finite semisimple 2-category. Using the categorical Schur lemma, i.e.\ proposition 1.2.19 of \cite{DR}, one can show that an object $N$ of $\mathfrak{N}$ is $\mathbf{2Vect}$-generator if and only if $Hom_{\mathfrak{N}}(N,-):\mathfrak{N}\rightarrow \mathbf{2Vect}$ is faithful on 2-morphisms. More generally, we have that an object $M$ of $\mathfrak{M}$ is a $\mathfrak{C}$-generator if and only if the linear 2-functor $\underline{Hom}(M,-):\mathfrak{M}\rightarrow \mathfrak{C}$ is faithful on 2-morphisms.
\end{Remark}

\begin{Theorem}\label{thm:2Ostrik}
Let $M$ be a $\mathfrak{C}$-generator of $\mathfrak{M}$. Then, $$\underline{Hom}(M,-):\mathfrak{M}\rightarrow \mathbf{Mod}_{\mathfrak{C}}(\underline{End}(M))$$ is an equivalence of left $\mathfrak{C}$-module 2-categories.
\end{Theorem}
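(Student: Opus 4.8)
The plan is to prove that $\underline{Hom}(M,-)$ is an equivalence of underlying $2$-categories; the upgrade to an equivalence of left $\mathfrak{C}$-module $2$-categories is then automatic, since a pseudo-inverse of a left $\mathfrak{C}$-module $2$-functor inherits a canonical left $\mathfrak{C}$-module structure for which the adjoint equivalence becomes a left $\mathfrak{C}$-module one (as in the bookkeeping of Proposition \ref{prop:hom2cat}). By Lemma \ref{lem:pre2ostrik} we already have the linear left $\mathfrak{C}$-module $2$-functor $\underline{Hom}(M,-)\colon\mathfrak{M}\to\mathbf{Mod}_{\mathfrak{C}}(\underline{End}(M))$, so only the equivalence remains. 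Since the Barr--Beck route is unavailable (the requisite weighted colimits in $\mathfrak{M}$ are not known to exist), the idea is to reduce to the $2$-categorical Ostrik theorem over $\mathbf{2Vect}$ from \cite{DR}, using the canonical $\mathbf{2Vect}$-module structures on $\mathfrak{C}$, $\mathfrak{M}$, and $\mathbf{Mod}_{\mathfrak{C}}(\underline{End}(M))$.

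First I would fix a $\mathbf{2Vect}$-generator $G$ of $\mathfrak{C}$ — for instance a direct sum of representatives of the finitely many simple objects of $\mathfrak{C}$ — and show that $G\Diamond M$ is a $\mathbf{2Vect}$-generator of $\mathfrak{M}$. By (the proof of) Corollary \ref{cor:representable2faithful} it is enough to check that $G\Diamond M$ has a nonzero summand in every connected component of $\mathfrak{M}$. Let $N$ be a simple object of $\mathfrak{M}$. As $M$ is a $\mathfrak{C}$-generator there is a nonzero $1$-morphism $f\colon C\Diamond M\to N$; decomposing $C$ into simples we may pick a simple summand $C_i\leq C$ with the corresponding component $f_i\colon C_i\Diamond M\to N$ nonzero, and as $G$ is a $\mathbf{2Vect}$-generator of $\mathfrak{C}$ there is a nonzero $1$-morphism $g_i\colon G\to C_i$. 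By Lemma \ref{lem:Schurmodule} the composite $f_i\circ(g_i\Diamond M)\colon G\Diamond M\to N$ is nonzero, giving the claim. Consequently the relevant result of \cite{DR} applies, so $Hom_{\mathfrak{M}}(G\Diamond M,-)\colon\mathfrak{M}\to\mathbf{Mod}_{\mathbf{2Vect}}(\underline{End}(G\Diamond M))$ is an equivalence of $2$-categories, where $\underline{End}(G\Diamond M)=Hom_{\mathfrak{M}}(G\Diamond M,G\Diamond M)$ is the algebra supplied by Corollary \ref{cor:endalgebra} for the $\mathbf{2Vect}$-module structure.

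Next I would assemble the commuting triangle. Combining the defining adjunction of $\underline{Hom}$ (Proposition \ref{prop:adjointmodule}) with the free--forgetful adjunction of Lemma \ref{lem:freeforgetadj}, there is a $2$-natural equivalence
\[
Hom_{\mathfrak{M}}(G\Diamond M,-)\;\simeq\;Hom(G,-)\circ\underline{Hom}(M,-),
\]
where $Hom(G,-)\colon\mathbf{Mod}_{\mathfrak{C}}(\underline{End}(M))\to\mathbf{Mod}_{\mathbf{2Vect}}(\underline{End}(G\Diamond M))$ is the $2$-functor that applies $Hom_{\mathfrak{C}}(G,-)$ after forgetting the $\underline{End}(M)$-action (to identify the target one uses the equivalence $\underline{Hom}(M,G\Diamond M)\simeq G\Box\underline{End}(M)$ from Lemma \ref{lem:enrichmentidentification}). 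This $2$-functor factors as the forgetful $2$-functor $\mathbf{Mod}_{\mathfrak{C}}(\underline{End}(M))\to\mathfrak{C}$ followed by $Hom_{\mathfrak{C}}(G,-)\colon\mathfrak{C}\to\mathbf{2Vect}$: the former is faithful on $2$-morphisms because $Hom_{\underline{End}(M)}(f,g)\subseteq Hom_{\mathfrak{C}}(f,g)$, and the latter is faithful on $2$-morphisms by Corollary \ref{cor:representable2faithful} since $G$ is a $\mathbf{2Vect}$-generator of $\mathfrak{C}$. Hence $Hom(G,-)$ is faithful on $2$-morphisms.

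Finally I would conclude. We have the composite $Hom(G,-)\circ\underline{Hom}(M,-)$, which is an equivalence of $2$-categories, while $Hom(G,-)$ is faithful on $2$-morphisms. The step I expect to be the main obstacle is the purely $2$-categorical deduction that these two facts force both $Hom(G,-)$ and $\underline{Hom}(M,-)$ to be equivalences; this is where one must use concrete structure of $\mathbf{Mod}_{\mathfrak{C}}(\underline{End}(M))$ — that its $Hom$-categories are finite semisimple, being Cauchy-complete additive full subcategories of the finite semisimple $Hom$-categories of $\mathfrak{C}$ (cf. Proposition \ref{prop:categoryofmodulesproperties}), together with the fact that every object of $\mathfrak{M}$ is a retract of some $C\Diamond M$ — to run a diagram chase at the levels of $2$-morphisms, $1$-morphisms, and objects; in particular the essential surjectivity of $\underline{Hom}(M,-)$ does not follow formally and is the subtlest point. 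I would isolate this as the key technical input and defer its proof to subsection \ref{sec:endproof}. Granting it, $\underline{Hom}(M,-)\colon\mathfrak{M}\to\mathbf{Mod}_{\mathfrak{C}}(\underline{End}(M))$ is an equivalence of $2$-categories, hence of left $\mathfrak{C}$-module $2$-categories by the first paragraph, so the theorem holds with $A=\underline{End}(M)$.
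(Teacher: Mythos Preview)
Your overall strategy is exactly the paper's: reduce to the $\mathbf{2Vect}$ case via a commuting triangle, use that $G\Diamond M$ is a $\mathbf{2Vect}$-generator (your argument with Lemma~\ref{lem:Schurmodule} is the paper's argument verbatim), invoke \cite{DR} for the top edge, observe the right-hand leg is faithful on $2$-morphisms, and conclude. The difference is in where you locate the work.

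You treat the construction of the right-hand leg and the commutativity of the triangle as essentially immediate (``combining the defining adjunction with the free--forgetful adjunction''), but this is precisely what the technical subsection~\ref{sec:endproof} does. The adjunction argument gives $Hom_{\mathfrak{M}}(G\Diamond M,-)\simeq Hom_{\mathfrak{C}}(G,-)\circ\underline{Hom}(M,-)$ only as $2$-functors into $\mathbf{2Vect}$; to land in $\mathbf{Mod}_{\mathbf{2Vect}}(\underline{End}(G\Diamond M))$ one must (i) put an explicit algebra structure on $Hom(G,G\Box\underline{End}(M))$ (Lemma~\ref{lem:algebra2Ostrik}), (ii) build the $2$-functor $F:\mathbf{Mod}_{\mathfrak{C}}(\underline{End}(M))\to\mathbf{Mod}_{\mathbf{2Vect}}(Hom(G,G\Box\underline{End}(M)))$ carrying module structures along (Lemma~\ref{lem:2functor2Ostrik}), (iii) exhibit an algebra equivalence $k:Hom(G,G\Box\underline{End}(M))\simeq End(G\Diamond M)$ (Lemma~\ref{lem:equivalencealgebras2Ostrik}), and (iv) prove the upper cell commutes up to $2$-natural equivalence (Lemma~\ref{lem:2naturalequivalence2Ostrik}). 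Your sentence ``to identify the target one uses Lemma~\ref{lem:enrichmentidentification}'' hides all of (i)--(iv).

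Conversely, the paper dispatches your ``main obstacle'' in two sentences: since the composite is an equivalence, $F$ is essentially surjective on objects and $1$-morphisms and full on $2$-morphisms; combined with faithfulness on $2$-morphisms (from the factorization through $\mathfrak{C}$), $F$ is an equivalence, and then $\underline{Hom}(M,-)$ is an equivalence by two-out-of-three. You are right that the implication ``$B\circ A$ equivalence and $B$ faithful on $2$-morphisms $\Rightarrow$ $B$ equivalence'' is not a general fact, so some care is warranted; but the paper does not regard this as the crux and certainly does not defer it to subsection~\ref{sec:endproof}.
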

\begin{proof}
By lemma \ref{lem:pre2ostrik}, it is enough to show that the underlying 2-functor $\underline{Hom}(M,-)$ induces an equivalence of linear 2-categories. To this end, let us fix an object $N$ of $\mathfrak{M}$, and consider the diagram below:

\begin{equation}\label{eqn:2Ostrik}\adjustbox{scale=0.93}{\begin{tikzcd}[sep=4.1ex]
{Hom_{\mathfrak{M}}(C\Diamond M, N)} \arrow[dd, "t"'] \arrow[rr, "{\underline{Hom}(M,-)}"] &  & {Hom_{\underline{End}(M)}(\underline{Hom}(M,C\Diamond M), \underline{Hom}(M,N))} \arrow[dd, "{(\chi^M_{C,M})^*}"]\\
  &  &  \\
{Hom_{\mathfrak{C}}(C,\underline{Hom}(M,N))} \arrow[rr] &  & {Hom_{\underline{End}(M)}(C\Box\underline{End}(M), \underline{Hom}(M,N)).} \end{tikzcd}}\end{equation} The left vertical map is given by proposition \ref{prop:adjointmodule}, and the bottom map is supplied by lemma \ref{lem:freeforgetadj}. We note that these two maps are equivalences. Further, as the 1-morphism $\chi^M_{C,M}$ is an equivalence of right $\underline{End}(M)$-module by lemmas \ref{lem:enrichmentidentification} and \ref{lem:pre2ostrik}, the right vertical map is also an equivalence. Therefore, in order to show that the top horizontal map is an equivalence, it is enough to show that the diagram depicted above in (\ref{eqn:2Ostrik}) commutes up to natural isomorphism.

Let $f:C\Diamond M\rightarrow N$ be a 1-morphism in $\mathfrak{M}$. Its image under the top right composite in (\ref{eqn:2Ostrik}) is given by the right $\underline{End}(M)$-module 1-morphism $$C\underline{(M,M)}\xrightarrow{u_M}\underline{(M,C\underline{(M,M)}M)}\xrightarrow{\ \underline{(M,1c_M)}\ }\underline{(M,CM)}\xrightarrow{\ \underline{(M,f)}\ }\underline{(M,N)}.$$ On the other hand, the image of $f$ under the bottom left composite is the right $\underline{End}(M)$-module 1-morphism $$C\underline{(M,M)}\xrightarrow{u_M1}\underline{(M,CM)}\ \underline{(M,M)}\xrightarrow{\ \underline{(M,f)}1}\underline{(M,N)}\ \underline{(M,M)}$$ $$\xrightarrow{u_M}\underline{(M,\underline{(M,N)}\ \underline{(M,M)}M)}\xrightarrow{\ \underline{(M,1c_M)}\ }\underline{(M,\underline{(M,N)}M)}\xrightarrow{\ \underline{(M,c_M)}\ }\underline{(M,N)}.$$ The two right $\underline{End}(M)$-module 1-morphisms above are isomorphic via

\settoheight{\mainthm}{\includegraphics[width=90mm]{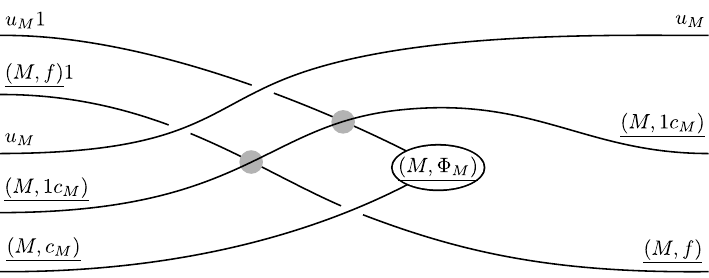}}

\begin{center}
\begin{tabular}{@{}cc@{}}
\raisebox{0.45\mainthm}{$\xi^f:=$} &
\includegraphics[width=90mm]{Module2CategoriesPictures/maintheorem/2Ostrikcommute.pdf}.
\end{tabular}
\end{center}

\noindent In addition, $\xi^f$ is compatible with the right $\underline{End}(M)$-module structures. This follows from a variant of the computation given in the proof of theorem \ref{thm:enrichmentmodule}. Furthermore, it follows from the definitions that $\xi^f$ is natural in $f$. This proves that (\ref{eqn:2Ostrik}) commutes up to natural isomorphism.

Let us write $\widetilde{\mathfrak{M}}$ for the sub-2-category of $\mathfrak{M}$ whose objects are $C\Diamond M$ for some $C$ in $\mathfrak{C}$ and that is full on 1-morphisms and 2-morphisms. As we have assumed that $M$ is a $\mathfrak{C}$-generator of $\mathfrak{M}$, we find that the Cauchy completion of the 2-category $\widetilde{\mathfrak{M}}$ is $\mathfrak{M}$. Namely, thanks to lemma A.2.5 of \cite{D1}, there is a linear 2-functor $Cau(\widetilde{\mathfrak{M}})\rightarrow \mathfrak{M}$ that is essentially surjective on 1-morphisms and fully faithful on 2-morphisms. Further, it follows from the first part of the proof of theorem 1.4.9 of \cite{DR} that this 2-functor is essentially surjective on objects.

We use $\widetilde{\mathbf{Mod}}_{\mathfrak{C}}(\underline{End}(M))$ to denote the sub-2-category of $\mathbf{Mod}_{\mathfrak{C}}(\underline{End}(M))$ whose objects are equivalent to $C\Diamond \underline{End}(M)$ for some $C$ in $\mathfrak{C}$ and that is full on 1-morphisms and 2-morphisms. But, $\underline{End}(M)$ is a $\mathfrak{C}$-generator of $\mathbf{Mod}_{\mathfrak{C}}(\underline{End}(M))$. Namely, for any right $\underline{End}(M)$-module $P$, the 1-morphism $n^P:P\Box \underline{End}(M)\rightarrow P$ has canonical $\underline{End}(M)$-module structure, and is non-zero if $P$ is non-zero. As above, this implies that $\mathbf{Mod}_{\mathfrak{C}}(\underline{End}(M))$ is the Cauchy completion of the 2-category $\widetilde{\mathbf{Mod}}_{\mathfrak{C}}(\underline{End}(M))$.

Finally, it follows from the first part of the proof above that the 2-functor $\underline{Hom}(M,-):\widetilde{\mathfrak{M}}\rightarrow \widetilde{\mathbf{Mod}}_{\mathfrak{C}}(\underline{End}(M))$ is an equivalence of 2-categories. Namely, this 2-functor is manifestly essentially surjective on objects, and we have shown that it is essentially surjective on 1-morphisms as well as fully faithful on 2-morphisms. Thanks to lemma A.2.5 of \cite{D1}, we therefore find that $\underline{Hom}(M,-):\mathfrak{M}\rightarrow \mathbf{Mod}_{\mathfrak{C}}(\underline{End}(M))$ is an equivalence of 2-categories as claimed.
\end{proof}

Let us now examine some examples.

\begin{Example}
In the case $\mathfrak{C}=\mathbf{2Vect}$, theorem \ref{thm:2Ostrik} becomes the statement that every finite semisimple 2-category is the 2-category of finite semisimple module categories over a finite semisimple monoidal category, which is in fact rigid by theorem \ref{thm:characterizationrigid} and example \ref{ex:rigidalgebra2Vect}. This is exactly the content of theorem 1.4.9 of \cite{DR}.
\end{Example}

\begin{Example}
Let us fix a finite group $G$, and consider the finite semisimple left $\mathbf{2Vect}_G$-module 2-category $\mathbf{2Vect}$ of example \ref{ex:2VectGmodule}. A $\mathbf{2Vect}_G$-generator is given $\mathbf{Vect}$, and it is easy to compute that $$\underline{End}(\mathbf{Vect}) \simeq \boxplus_{g\in G}\mathbf{Vect}_g.$$ Further, the composition map is given by the multiplication in $G$, and it is easy to see directly that $\mathbf{Mod}_{\mathbf{2Vect}_G}(\underline{End}(\mathbf{Vect}))\simeq \mathbf{2Vect}$ with unique equivalence class of simple object given by $\underline{End}(\mathbf{Vect})$. Alternatively, theorem \ref{thm:2Ostrik} asserts that $$\underline{Hom}(\mathbf{Vect},-):\mathbf{2Vect}\rightarrow \mathbf{Mod}_{\mathbf{2Vect}_G}(\underline{End}(\mathbf{Vect}))$$ is an equivalence of $\mathbf{2Vect}_G$-module 2-categories. Finally, let us observe that the 1-morphism $$\begin{tabular}{ccc}$\underline{End}(\mathbf{Vect})$&$\rightarrow $&$\underline{End}(\mathbf{Vect})\boxtimes\underline{End}(\mathbf{Vect})$\\$\mathbf{Vect}_f$&$\mapsto$&$\boxplus_{g\in G}\mathbf{Vect}_{g}\boxtimes \mathbf{Vect}_{g^{-1}f}$\end{tabular}$$ splits the multiplication map of $\underline{End}(\mathbf{Vect})$ as a bimodule 1-morphism. This means that $\underline{End}(\mathbf{Vect})$ is rigid. In fact, it is possible to show that this algebra is separable.
\end{Example}

\begin{Example}
Let $\mathcal{B}$ be a braided fusion category. Thanks to lemma \ref{lem:Mod(B)modules}, finite semisimple left $\mathbf{Mod}(\mathcal{B})$-module 2-categories correspond exactly to multifusion categories $\mathcal{C}$ equipped with a central monoidal functor $\mathcal{B}\rightarrow \mathcal{C}$. If we pick $\mathcal{C}$ as our $\mathbf{Mod}(\mathcal{B})$-generator for $\mathbf{Mod}(\mathcal{C})$, we find that $$\underline{End}(\mathcal{C}) \simeq \mathcal{C},$$ where the right hand-side is viewed as a right $\mathcal{B}$-module category through the monoidal functor $\mathcal{B}\rightarrow \mathcal{C}$. Theorem \ref{thm:2Ostrik} thus recovers the equivalence $$\mathbf{Mod}(\mathcal{C})\simeq \mathbf{Mod}_{\mathbf{Mod}(\mathcal{B})}(\mathcal{C}),$$ i.e.\ finite semisimple right $\mathcal{C}$-module categories correspond precisely to finite semisimple right $\mathcal{B}$-module categories with a compatible right action by $\mathcal{C}$.
\end{Example}

\begin{Remark}\label{rem:separability}
We have seen in theorem \ref{thm:rigid} that the algebra $\underline{End}(M)$ is rigid. It is expected that these algebras are separable. Proving this implies that every multifusion 2-category is separable, as we will show in \cite{D6}.
\end{Remark}

\begin{Remark}\label{rem:final}
Motivated by remark \ref{rem:rigidi=separable}, let us assume that every rigid algebra in a multifusion 2-category is separable. Under this hypothesis, we can push the reasoning of remark \ref{rem:Morita3Cat} further. Namely, bimodules over separable algebras admit a canonical tensor product operation (see \cite{GJF}), which ought to correspond to the composition of left $\mathfrak{C}$ module 2-functors. In fact, we expect that there is an equivalence of 3-categories $$\adjustbox{scale=0.8}{$\begin{Bmatrix}
Separable\ Algebras\ in\ \mathfrak{C}\\
Bimodules\ in\ \mathfrak{C}\\
Bimodule\ 1\text{-}morphisms\ in\ \mathfrak{C}\\
Bimodule\ 2\text{-}morphisms\ in\ \mathfrak{C}
\end{Bmatrix}\rightarrow
\begin{Bmatrix}
Finite\ Semisimple\ left\ \mathfrak{C}\text{-}module\ 2\text{-}Categories\\
Left\ \mathfrak{C}\text{-}module\ 2\text{-}Functors\\
Left\ \mathfrak{C}\text{-}module\ 2\text{-}Natural\ Transformations\\
Left\ \mathfrak{C}\text{-}module\ Modifications
\end{Bmatrix}$},
$$
given by $\mathbf{Mod}_{\mathfrak{C}}(-)$. In the special case $\mathfrak{C}=\mathbf{2Vect}$, we have argued in example \ref{ex:rigidalgebra2Vect} that separable algebras are precisely multifusion categories, so that this equivalence of 3-categories is exactly the main result of \cite{D1}.
\end{Remark}

\begin{Remark}\label{rem:2OstrikGeneral}
All the results of this section hold more generally for any perfect field $\mathds{k}$, locally separable compact semisimple tensor 2-category $\mathfrak{C}$ over $\mathds{k}$, and locally separable compact semisimple left $\mathfrak{C}$-module 2-category $\mathfrak{M}$. In more detail, we have already explained in remarks \ref{rem:EnrichmentGeneral}, \ref{rem:EnrichmentModuleGeneral}, and \ref{rem:rigidgeneral} that the results of the previous sections except theorem \ref{thm:rigid} hold at this level of generality. Instead of appealing to theorem \ref{thm:rigid}, we can use lemma \ref{lem:2Ostriktechnical} below in the proof of theorem \ref{thm:2Ostrik}. In addition, we also need to make use of theorem 1.3.6 of \cite{D5} instead of theorem 1.4.9 of \cite{DR}.
\end{Remark}

\begin{Lemma}\label{lem:2Ostriktechnical}
Let $\mathfrak{C}$ be a locally separable compact semisimple tensor 2-category over a perfect field, and $\mathfrak{M}$ a locally separable left $\mathfrak{C}$-module 2-category. Then, $\mathbf{Mod}_{\mathfrak{C}}(\underline{End}(M))$ is a compact semisimple 2-category for every object $M$ in $\mathfrak{M}$.
\end{Lemma}
\begin{proof}
We have seen in remark \ref{rem:rigidgeneral} above that there exists a monoidal functor $\mathcal{C}\rightarrow \mathcal{A}$ between finite semisimple tensor 1-categories such that $$\mathbf{Mod}_{\mathfrak{C}}(\underline{End}(M))\simeq\mathbf{Mod}(\mathcal{A}),$$ that is we can identify $\mathbf{Mod}_{\mathfrak{C}}(\underline{End}(M))$ with the 2-category of finite semisimple right $\mathcal{A}$-module 1-categories. In particular, it follows from proposition 2.5.10 of \cite{DSPS13} that, in order to check that $\mathbf{Mod}_{\mathfrak{C}}(\underline{End}(M))$ is compact semisimple, it is enough to check that for every indecomposable finite semisimple $\mathcal{A}$-module 1-category $\mathcal{N}$, there exists a separable finite semisimple $\mathcal{A}$-module 1-category $\mathcal{M}$ and a non-zero right $\mathcal{A}$-module functor $\mathcal{M}\rightarrow\mathcal{N}$ such that $End_{\mathcal{A}}(\mathcal{M})$ is a separable finite semisimple tensor 1-category.

Now, under the above equivalence, we can view the $\mathcal{A}$-module 1-category $\mathcal{N}$ as a right $\underline{End}(M)$-module $N$ in $\mathfrak{C}$. In particular, there is a non-zero right $\underline{End}(M)$-module 1-morphism $N\Box \underline{End}(M)\rightarrow N$. But, we have equivalences of finite semisimple tensor 1-categories $$End_{\underline{End}(M)}(N\Box \underline{End}(M))\simeq Hom_{\mathfrak{C}}(N, N\Box \underline{End}(M))\simeq End_{\mathfrak{M}}(N\Box M).$$ Finally, the right hand-side is a separable tensor 1-category because we have assumed that $\mathfrak{M}$ is locally separable. This finishes the proof of the lemma.
\end{proof}

\begin{Remark}
We emphasize that the hypothesis that $\mathfrak{M}$ be locally separable cannot be removed in the generalized version of theorem \ref{thm:2Ostrik} given in remark \ref{rem:2OstrikGeneral}. Namely, with $\mathds{k}$ is an algebraically closed field of positive characteristic $p$, $\mathfrak{C}=\mathbf{2Vect}$, $\mathfrak{M}$ the finite semisimple 2-category of separable right $\mathbf{Vect}_{\mathbb{Z}/p}$-module 1-categories, and $M = \mathbf{Vect}_{\mathbb{Z}/p}$, we find that $\underline{End}(M) \simeq \mathbf{Vect}_{\mathbb{Z}/p}$. But, $\mathbf{Mod}_{\mathbf{2Vect}}(\mathbf{Vect}_{\mathbb{Z}/p})$ is the 2-category of all finite semisimple right $\mathbf{Vect}_{\mathbb{Z}/p}$-module 1-categories. In particular, it contains $\mathbf{Vect}$, which is not a separable right $\mathbf{Vect}_{\mathbb{Z}/p}$-module 1-category.
\end{Remark}

\appendix

\section{Appendix}\label{sec:appendix}

\subsection{Diagrams for the Proof of Proposition \ref{prop:enrichedhommod}}\label{sec:diagcoherenceomega}

\begin{figure}[!htpb]
\centering
    \includegraphics[width=67.5mm]{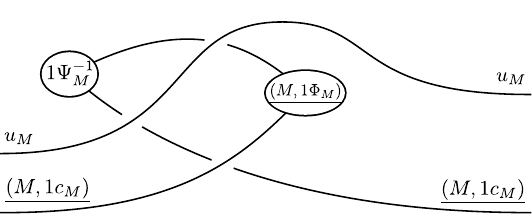}
\caption{Axiom b}
\label{fig:omegaidentity}
\end{figure}

\begin{figure}[!htpb]
\centering
    \includegraphics[width=67.5mm]{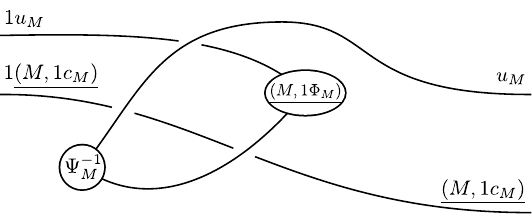}
\caption{Axiom c}
\label{fig:omegaidentityc}
\end{figure}

\begin{figure}[!htpb]
\centering
    \includegraphics[width=105mm]{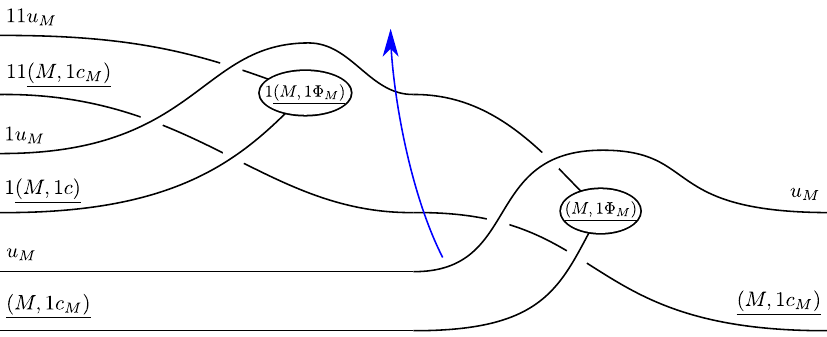}
\caption{Axiom a (Part 1)}
\label{fig:enrichedhomassociativity1}
\end{figure}

\begin{figure}[!htpb]
\centering
    \includegraphics[width=105mm]{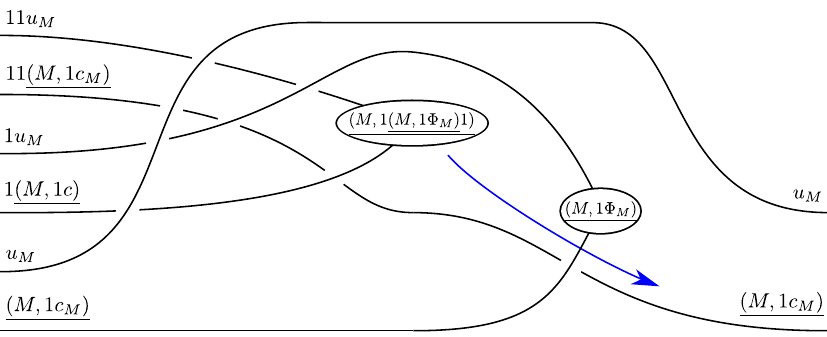}
\caption{Axiom a (Part 2)}
\label{fig:enrichedhomassociativity2}
\end{figure}

\begin{figure}[!htpb]
\centering
    \includegraphics[width=105mm]{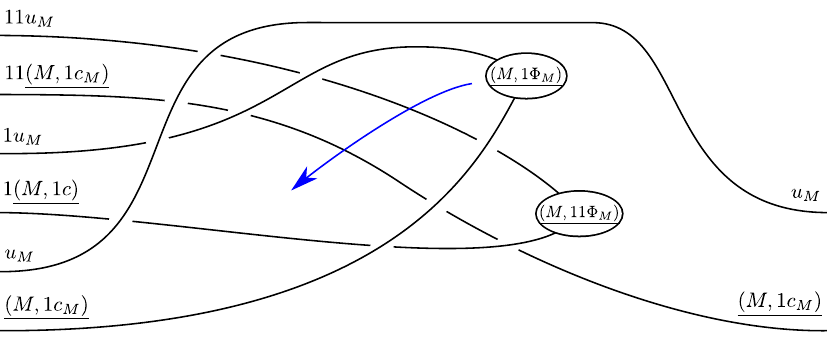}
\caption{Axiom a (Part 3)}
\label{fig:enrichedhomassociativity3}
\end{figure}

\begin{figure}[!htpb]
\centering
    \includegraphics[width=105mm]{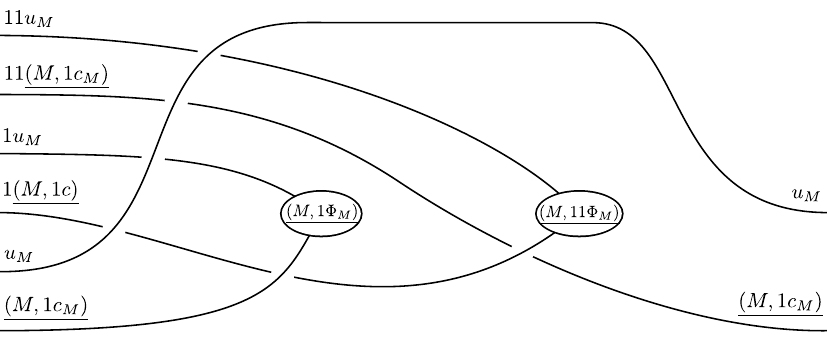}
\caption{Axiom a (Part 4)}
\label{fig:enrichedhomassociativity4}
\end{figure}

\begin{figure}[!htpb]
\centering
    \includegraphics[width=105mm]{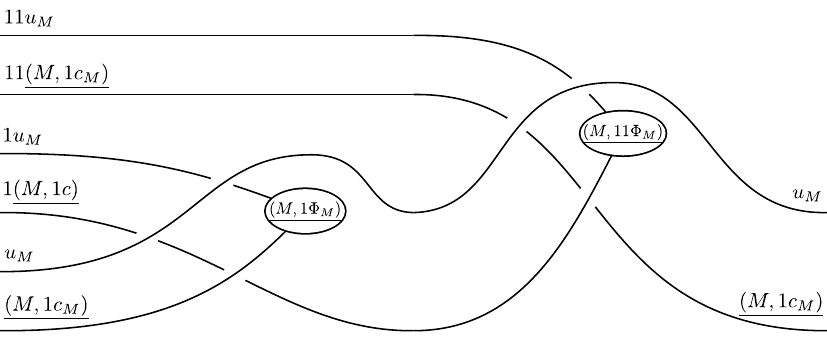}
\caption{Axiom a (Part 5)}
\label{fig:enrichedhomassociativity5}
\end{figure}

\FloatBarrier

\subsection{Diagrams for the Proof of Theorem \ref{thm:enrichmentmodule}}\label{sec:diagcoherenceenriched}

\begin{figure}[!htpb]
\centering
\includegraphics[width=110mm]{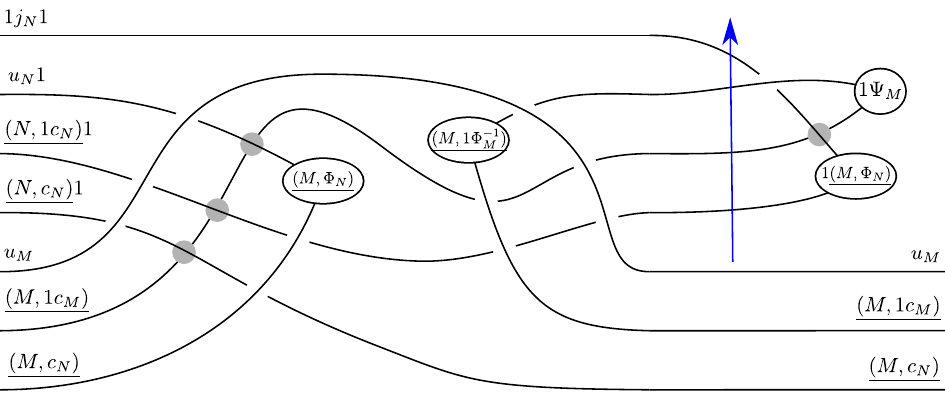}
\caption{Axiom b (Part 1)}
\label{fig:enrichedcatunitality1}
\end{figure}

\begin{figure}[!htpb]
\centering
\includegraphics[width=110mm]{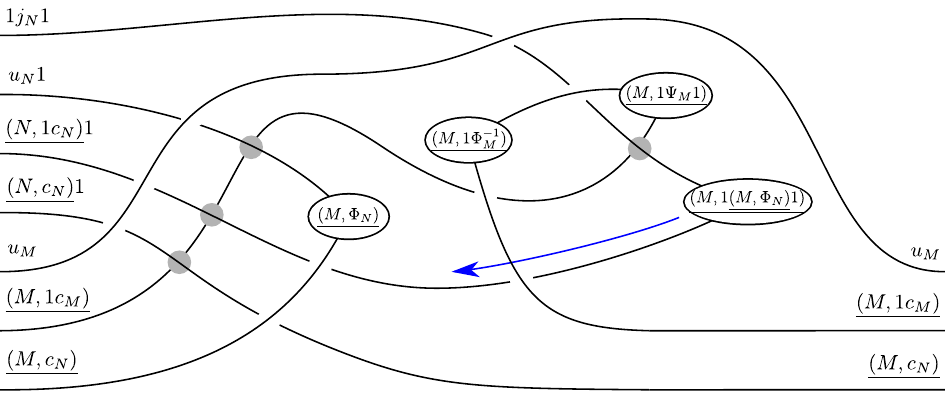}
\caption{Axiom b (Part 2)}
\label{fig:enrichedcatunitality2}
\end{figure}

\begin{figure}[!htpb]
\centering
\includegraphics[width=110mm]{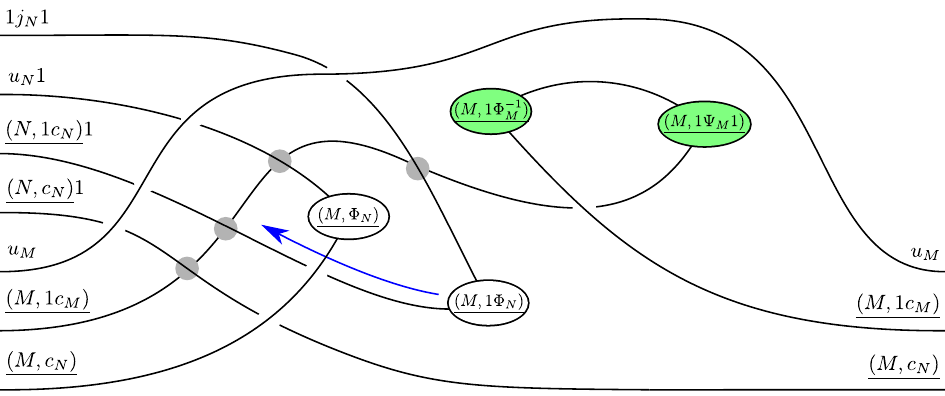}
\caption{Axiom b (Part 3)}
\label{fig:enrichedcatunitality3}
\end{figure}

\begin{figure}[!htpb]
\centering
\includegraphics[width=120mm]{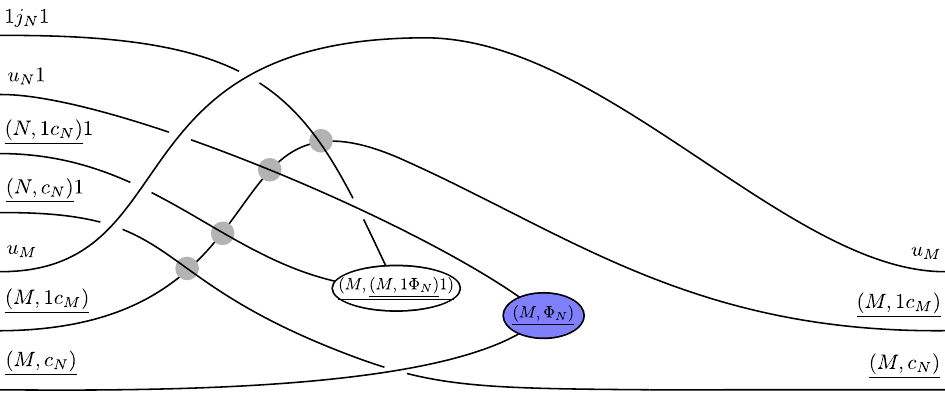}
\caption{Axiom b (Part 4)}
\label{fig:enrichedcatunitality4}
\end{figure}

\begin{figure}[!htpb]
\centering
\includegraphics[width=120mm]{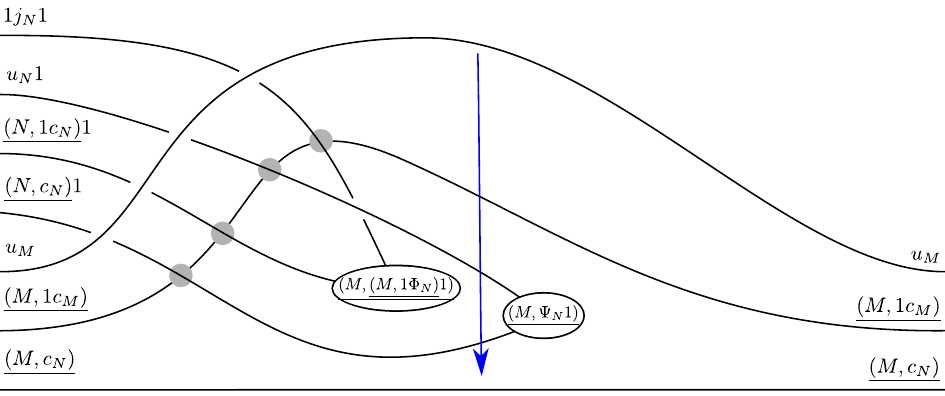}
\caption{Axiom b (Part 5)}
\label{fig:enrichedcatunitality5}
\end{figure}

\begin{figure}[!htpb]
\centering
\includegraphics[width=80mm]{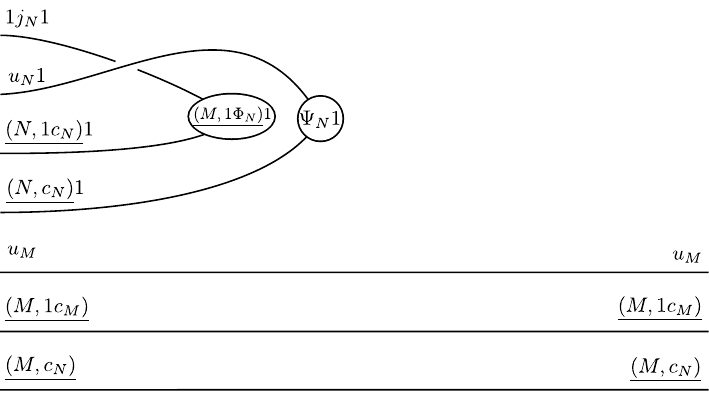}
\caption{Axiom b (Part 6)}
\label{fig:enrichedcatunitality6}
\end{figure}

\FloatBarrier

\begin{landscape}
    \vspace*{\fill}
    \begin{figure}[!htpb]
    \centering
    \includegraphics[width=180mm]{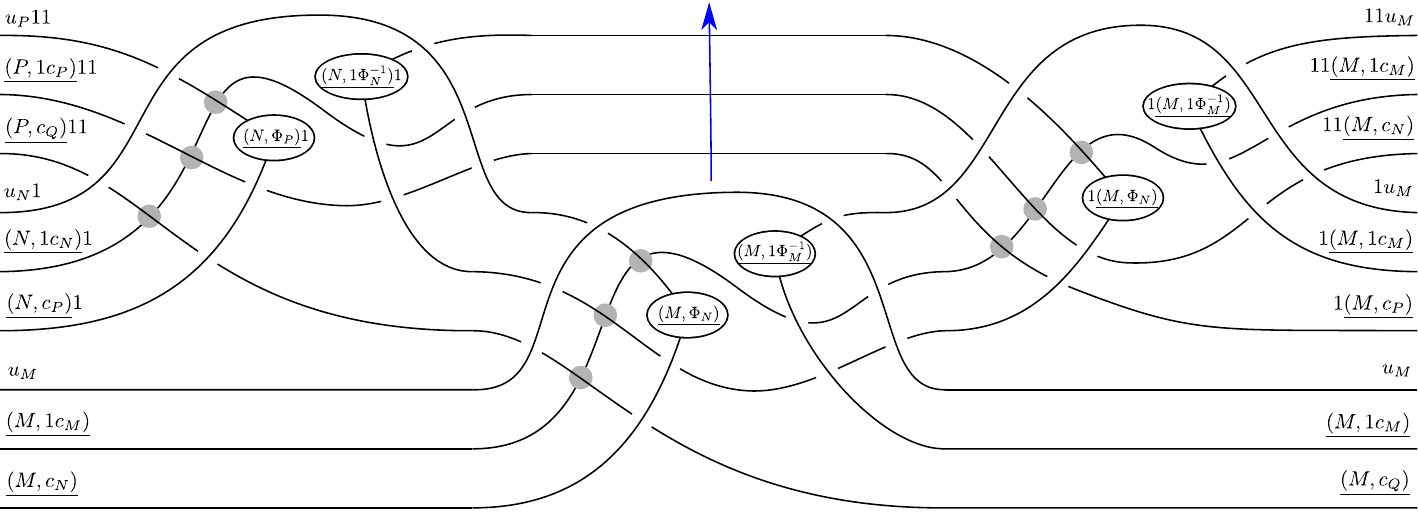}
    \caption{Axiom a (Part 1)}
    \label{fig:enrichedcatassociativity1}
    \end{figure}
    \vfill
\end{landscape}

\begin{landscape}
    \vspace*{\fill}
    \begin{figure}[!htpb]
    \centering
    \includegraphics[width=180mm]{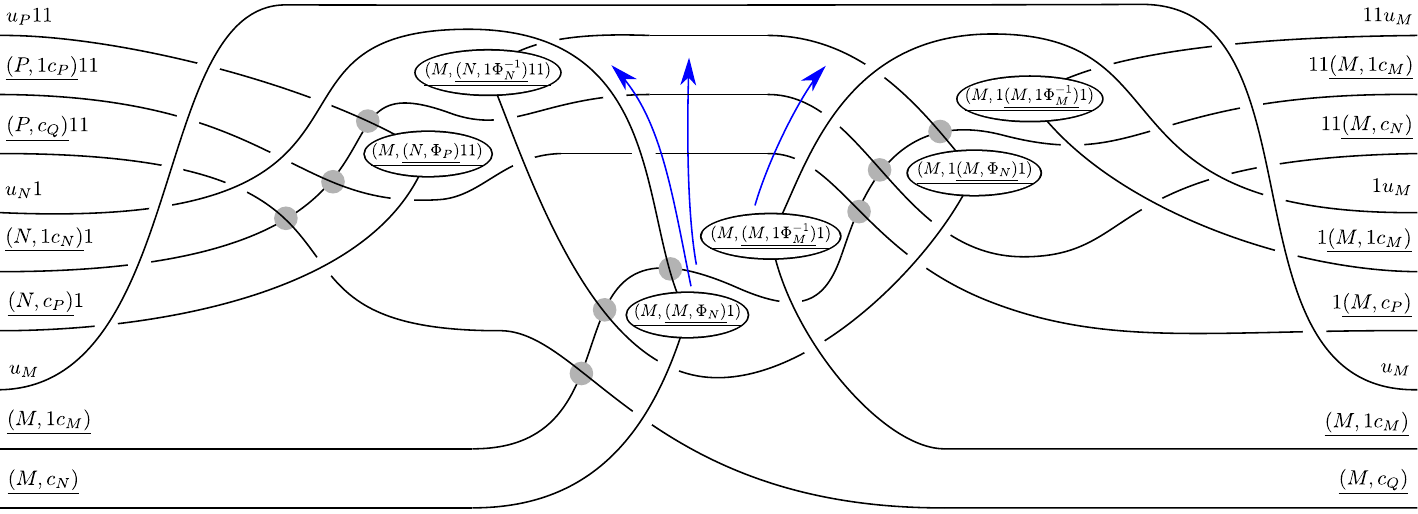}
    \caption{Axiom a (Part 2)}
    \label{fig:enrichedcatassociativity2}
    \end{figure}
    \vfill
\end{landscape}

\begin{landscape}
    \vspace*{\fill}
    \begin{figure}[!htpb]
    \centering
    \includegraphics[width=180mm]{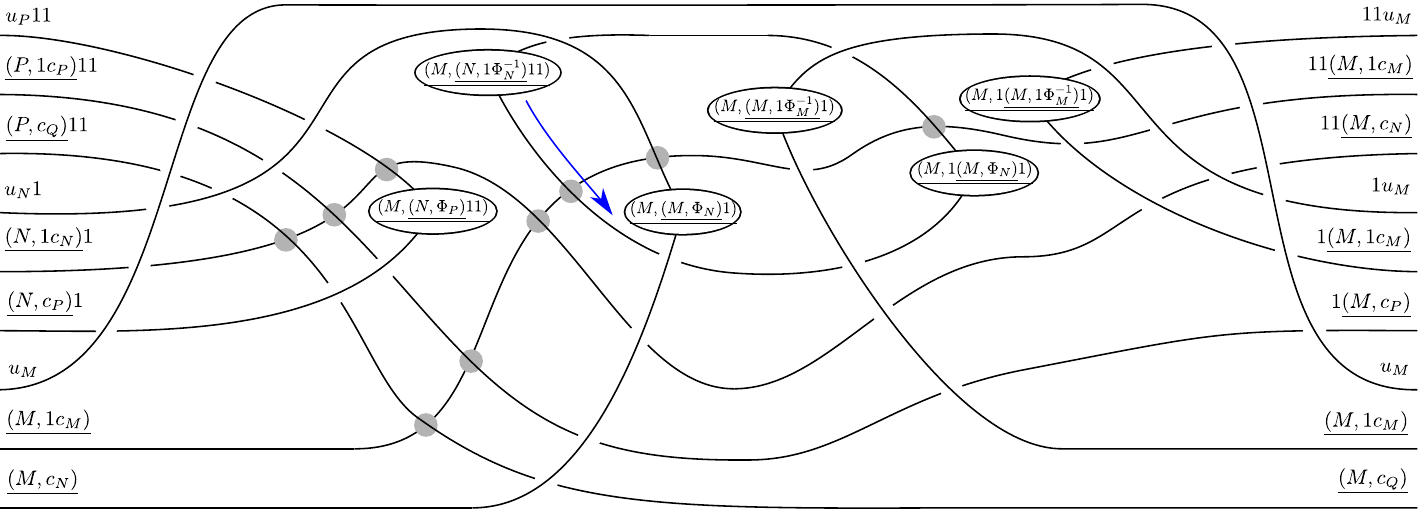}
    \caption{Axiom a (Part 3)}
    \label{fig:enrichedcatassociativity3}
    \end{figure}
    \vfill
\end{landscape}

\begin{landscape}
    \vspace*{\fill}
    \begin{figure}[!htpb]
    \centering
    \includegraphics[width=180mm]{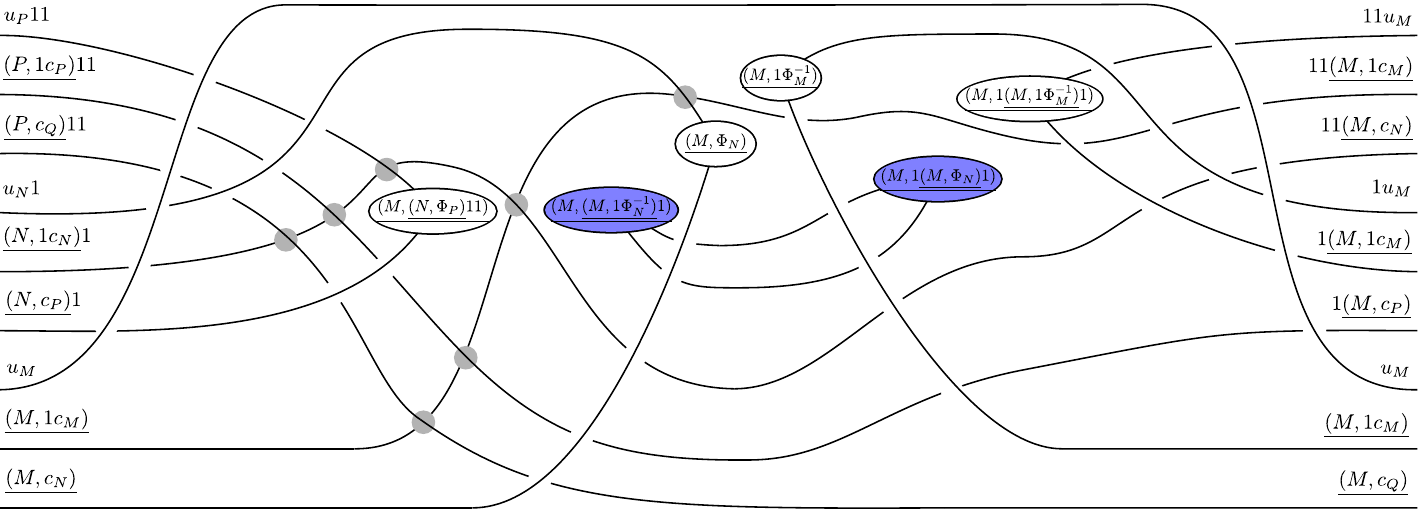}
    \caption{Axiom a (Part 4)}
    \label{fig:enrichedcatassociativity4}
    \end{figure}
    \vfill
\end{landscape}

\begin{landscape}
    \vspace*{\fill}
    \begin{figure}[!htpb]
    \centering
    \includegraphics[width=180mm]{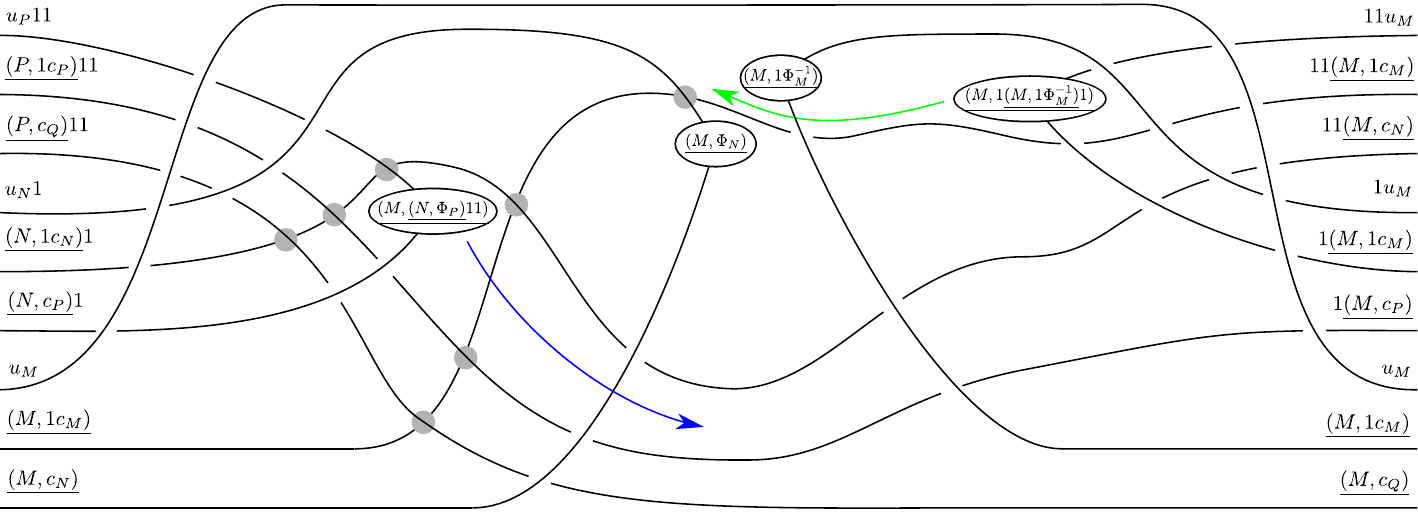}
    \caption{Axiom a (Part 5)}
    \label{fig:enrichedcatassociativity5}
    \end{figure}
    \vfill
\end{landscape}

\begin{landscape}
    \vspace*{\fill}
    \begin{figure}[!htpb]
    \centering
    \includegraphics[width=180mm]{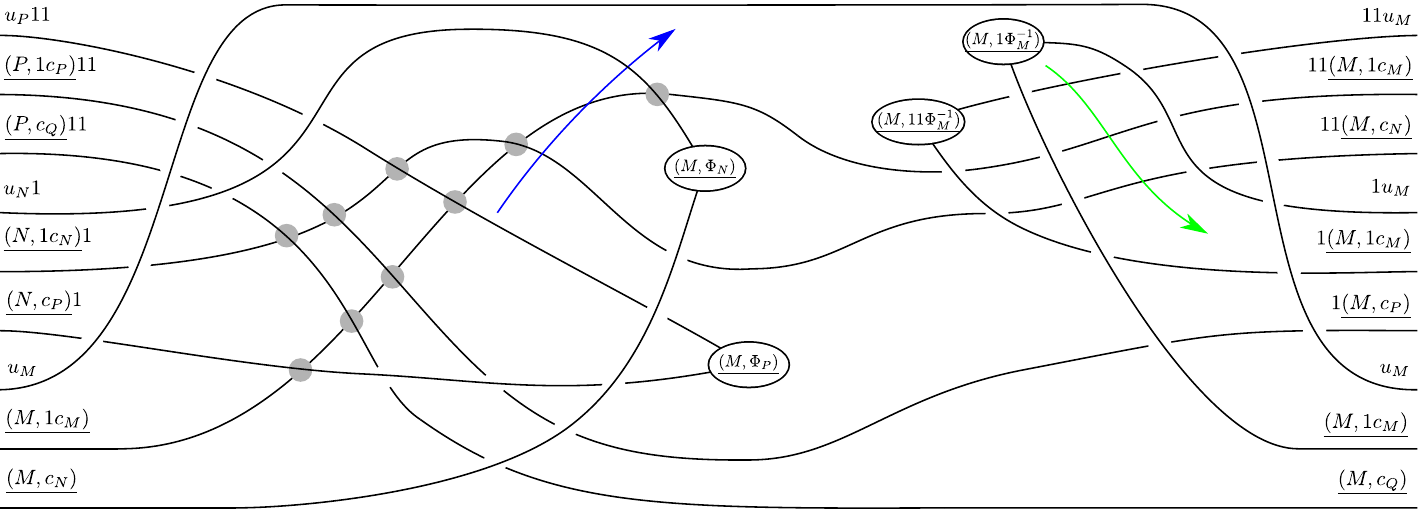}
    \caption{Axiom a (Part 6)}
    \label{fig:enrichedcatassociativity6}
    \end{figure}
    \vfill
\end{landscape}

\begin{landscape}
    \vspace*{\fill}
    \begin{figure}[!htpb]
    \centering
    \includegraphics[width=180mm]{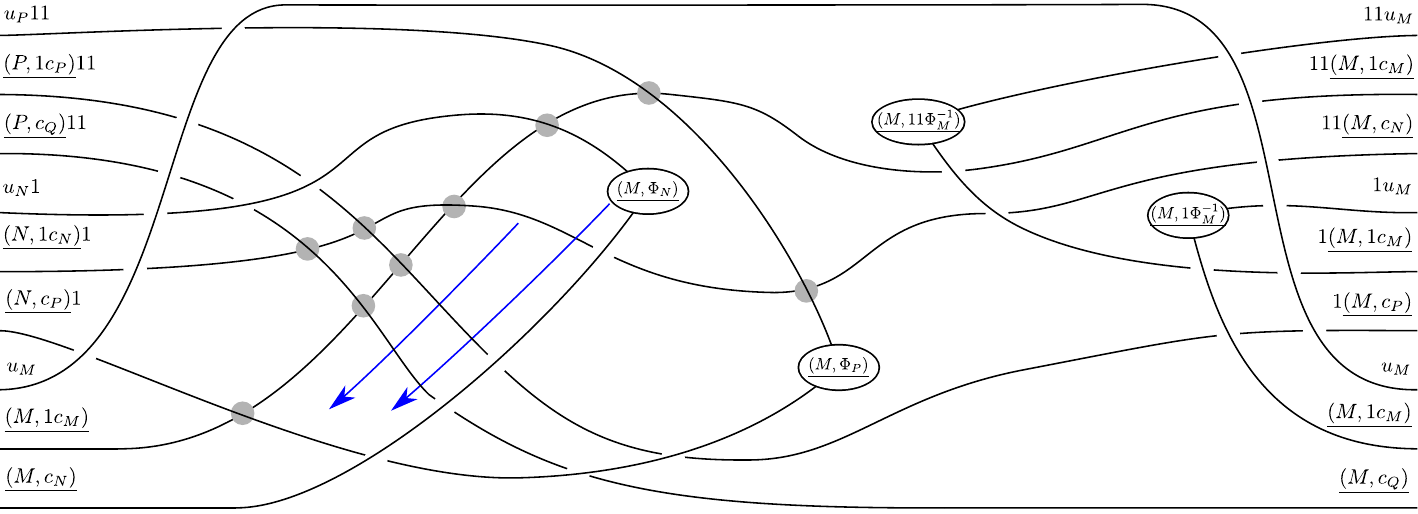}
    \caption{Axiom a (Part 7)}
    \label{fig:enrichedcatassociativity7}
    \end{figure}
    \vfill
\end{landscape}

\begin{landscape}
    \vspace*{\fill}
    \begin{figure}[!htpb]
    \centering
    \includegraphics[width=180mm]{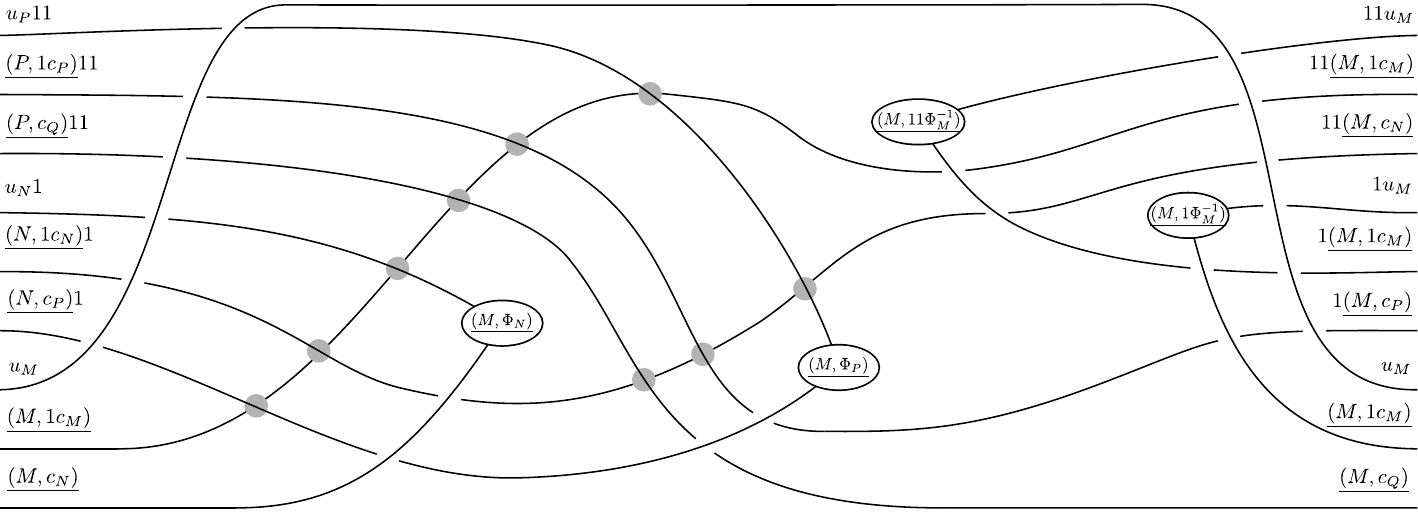}
    \caption{Axiom a (Part 8)}
    \label{fig:enrichedcatassociativity8}
    \end{figure}
    \vfill
\end{landscape}

\begin{landscape}
    \vspace*{\fill}
    \begin{figure}[!htpb]
    \centering
    \includegraphics[width=180mm]{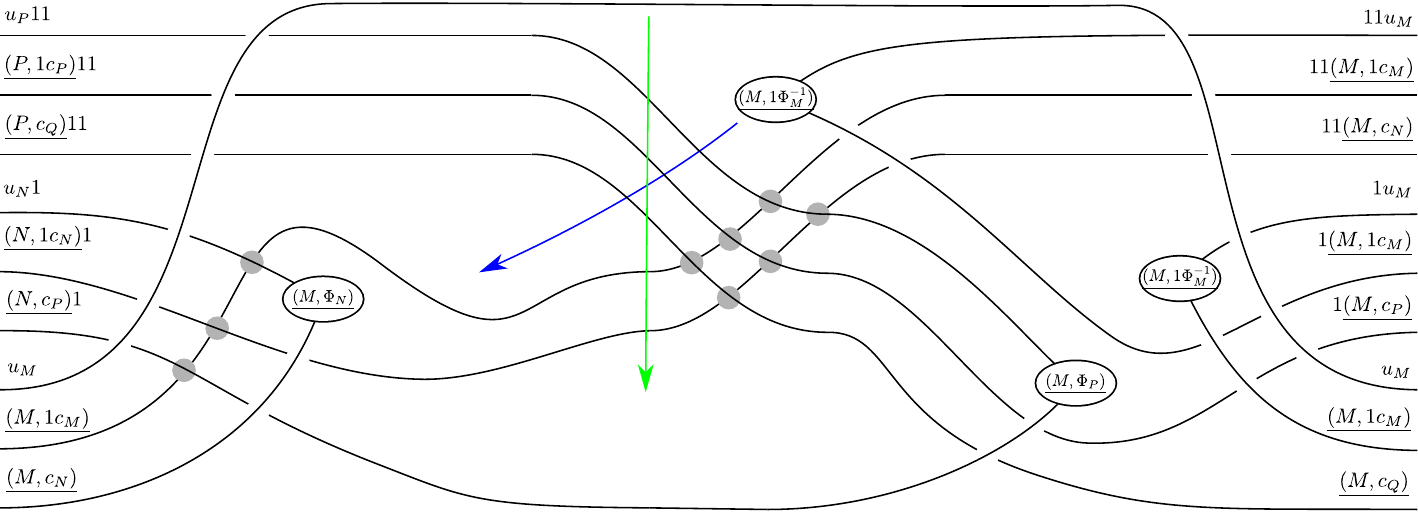}
    \caption{Axiom a (Part 9)}
    \label{fig:enrichedcatassociativity9}
    \end{figure}
    \vfill
\end{landscape}

\begin{landscape}
    \vspace*{\fill}
    \begin{figure}[!htpb]
    \centering
    \includegraphics[width=180mm]{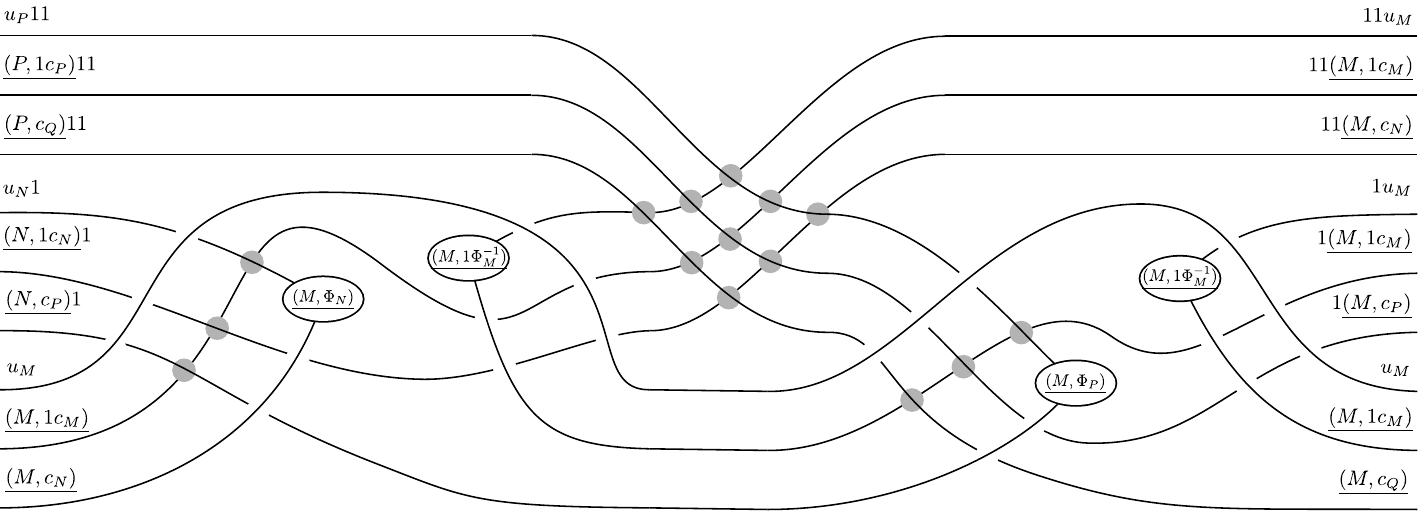}
    \caption{Axiom a (Part 10)}
    \label{fig:enrichedcatassociativity10}
    \end{figure}
    \vfill
\end{landscape}

\FloatBarrier

\bibliography{bibliography.bib}

\end{document}